
\documentclass[12pt]{article}

\usepackage{fullpage,amsfonts,amsmath,amsthm,amssymb,mathrsfs,graphicx,enumitem}

\usepackage{verbatim,url}

   \usepackage[top=2.54cm, bottom=2.54cm, left=2.54 cm, right=2.54 cm]{geometry}
 \newcommand{\lab}[1]{\label{#1}}                

 \usepackage[usenames,dvipsnames]{color}


 


\newcommand{\remove}[1]{}
\newcommand\eqn[1]{(\ref{#1})}

\newcommand{\be}{\begin{equation}}
\newcommand{\bel}[1]{\begin{equation}\lab{#1}\ }
\newcommand{\ee}{\end{equation}}
\newcommand{\bea}{\begin{eqnarray}}
\newcommand{\eea}{\end{eqnarray}}
\newcommand{\bean}{\begin{eqnarray*}}
\newcommand{\eean}{\end{eqnarray*}}

\newtheorem{thm}{Theorem}

\newtheorem{lemma}[thm]{Lemma}

\def\proof{\noindent{\bf Proof.\ }  }
\def\qed{~~\vrule height8pt width4pt depth0pt}


\newcommand{\UB}{{\overline m}}
\newcommand{\LB}{{\underline m}}

\def\ex{{\mathbb E}}
\def\pr{{\mathbb P}}
 \def\P{{\cal P}}
\def\C{{\cal S}}
\def\state{{\cal S}}

 \def\reject{{R}}
\def\output{{F}}
\def\markov{\mathscr{M}}
\def\classes{\mathcal{C}}
\def\types{\mathcal{T}}
\def\Alpha{\classes}
\def\Tau{\types}
\def\group{stratum}
\def\groups{strata}

\def\quads{\mathscr{S}}
\def\acceptableMW{\mathcal{A}}
\def\acceptable{\mathcal{A}_{\gamma}}  
\def\simple{\mathcal{B}}
\def\imax{i_1}
\def\vs{{\vec \sigma}}


\newcommand{\NameA}{{\em REG}}
\newcommand{\NameB}{{\em REG*}}


\def\eps{\epsilon}


\def\ss{\smallskip}
\def\non{\nonumber}
\def\no{\noindent}


\date{}

\title{Uniform generation of random regular graphs\footnote{The conference version will appeared in {\em Proc.\ FOCS 2015}.}}
\author{
Pu Gao\thanks{Research supported by NSERC while this author was affiliated with University of Waterloo. }\\
 Monash University\\
jane.gao@monash.edu
\and
Nicholas Wormald\thanks{Research supported by  Australian Laureate Fellowships grant FL120100125.}
\\
Monash University\\
nick.wormald@monash.edu }
\begin{document}
\maketitle
\begin{abstract}
We develop a new approach for uniform generation of combinatorial objects,  and apply  it to derive a uniform sampler \NameA\ for $d$-regular graphs. \NameA\ can be implemented such that each graph is generated in expected time  $O(nd^3)$, provided that $d=o(\sqrt{n})$. Our result significantly improves the previously best uniform sampler, which works efficiently only when $d=O(n^{1/3})$, with essentially the same running time for the same $d$. We also give a linear-time approximate sampler \NameB, which generates a random $d$-regular graph whose distribution differs from the uniform by $o(1)$ in total variation distance, when $d=o(\sqrt{n})$.

\end{abstract}

\section{Introduction}
\lab{s:intro}
 
Research on uniform generation of random graphs is almost as old as modern computing. Tinhofer~\cite{Tinhofer} gave a generation algorithm in which the probabilities of the graphs are computed {\em a posteriori}. In theory, this could be used to produce any desired distribution by rejection sampling, but  no explicit bounds on the time complexity of this method are known. The earliest method useful in practice for achieving exactly uniform generation arose from the enumeration methods of  B{\'e}k{\'e}ssy,  B{\'e}k{\'e}ssy and Koml\'os~\cite{BBK},  Bender and Canfield~\cite{BC}  and Bollob\'{a}s~\cite{B}. This works  in linear expected time for graphs with bounded maximum degree $d$, though the multiplicative constant behaves like $e^{ (d-1)^2/4}$ as a function of $d$ (making it rather impractical for moderately large $d$, even $d=12$).  It generalises easily to a simple algorithm for uniform generation of graphs with given degrees. (See, for example,~\cite{W84}, which in addition gives an algorithm for 3-regular graphs that has linear deterministic time.) The algorithm starts by generating a pairing, to be defined below, uniformly at random. If the corresponding graph is simple then it is outputted. Otherwise, the algorithm is restarted.

A big advance on exactly uniform generation, which significantly relaxed the constraint on the maximum degree, was by McKay and Wormald~\cite{MWgen}. Their algorithm efficiently and uniformly generates random graphs with given degrees as long as the maximum degree is $O(M^{1/4})$ where $M$ is the total degree. A   case of particular interest is the generation of $d$-regular graphs, where the expected running time is $O(nd^3)$, for any $d=O(n^{1/3})$. This is currently the best exactly uniform sampler for regular graphs and graphs with given degrees.

When uniform generation of some class of objects seems difficult, a fallback position is to investigate approximate solutions. One   approach is to use the Markov Chain Monte Carlo (MCMC) method. In this, an ergodic Markov chain on the set of graphs with given degrees is designed so that the stationary distribution is uniform. Then, the random graph obtained after taking a sufficiently large number of steps (i.e.\ the so-called mixing time of the chain)  has distribution that is close to   uniform. Jerrum and Sinclair~\cite{JS} gave a fully polynomial time approximation scheme (FPTAS) for approximate uniform generation of graphs with given degrees. Their algorithm  works for a large class of degree sequences. In particular, it works for all regular graphs.  The bound on the mixing time, and hence the runtime of the algorithm required for any guarantee of approximation to the uniform distribution, is a polynomial in $n$ (and is not specified, nor optimised, in their paper). Kannan, Tetali and Vempala~\cite{KTV} used another Markov chain, whose transitions are defined by switching two properly chosen edges in a certain way, to approximately sample random regular bipartite graphs. Again, they proved that the mixing time is polynomial in $n$ without specifying an asymptotic bound. This Markov chain was further extended by Cooper, Dyer and Greenhill~\cite{CDG} for generation of random $d$-regular graphs. They showed that the mixing time is then bounded by roughly $d^{24}n^9\log n$. Very recently, Greenhill~\cite{G4} extended that result  to the non-regular case, with a bound on the mixing time of $\Delta^{14}M^{10}\log M$ where $\Delta$ and $M$ denote the maximum and total degrees respectively. This result applies only for $3\le \Delta\le \frac14\sqrt M$.  These MCMC-based algorithms generate  every graph with a probability within a factor $1\pm \eps$ of the probability in the uniform distribution, and $\eps>0$ can be made arbitrarily small by running the chain sufficiently long. So the output of such algorithms is almost as good as that from an exactly uniform sampler, for any practical use.  However, the provable bounds on the degree of the polynomials involved are too high for any practical use. Actually, there is a general belief that MCMC algorithms such as this do  mix much faster, and produce near-uniform results  much more quickly, than what has been proved. If this were proved, they could  give practical algorithms for approximately uniform generation.  Without such a proof, one can only guess the accuracy of the results.

   There is a variation of MCMC called {\em coupling from the past}~\cite{PW}, which is capable of producing a target distribution precisely. However, it is often hard to prove useful bounds on the running time, and the method has not been successfully applied to generating random graphs given degrees.  
 
There are   algorithms   faster than the   MCMC-based ones, that generate graphs with a weaker approximation of the distribution  to the uniform. 
Steger and Wormald~\cite{SW} gave  an $O(d^2n)$-time  algorithm that generates random $d$-regular graphs for $d$ up to $n^{1/28}$, where all graphs are generated with asymptotically the same probability. Kim and Vu~\cite{KV} proved that the same algorithm  works to the same extent for all $d\le n^{1/3-\eps}$. Bayati et al.~\cite{BKS} subsequently modified and extended it for the non-regular case, under certain conditions, and generated random $d$-regular graphs for all $d\le n^{1/2-\eps}$, but with a weaker approximation to uniform (bounding the total variation distance by o(1)).  Using a different approach, Zhao~\cite{Zhao} obtained an $O(dn)$-time approximate algorithm, which can generate $d$-regular graphs for $d=o(n^{1/3})$ under weak approximation (bounding the total variation distance).  These algorithms are much faster than using   MCMC, but the approximation (to the uniform) is achieved only asymptotically as $n\to\infty$. Thus, whereas MCMC permits the approximation error to be made arbitrarily small for graphs of a fixed size by running on the chain sufficiently long, the approximation error in~\cite{BKS,KV, SW,Zhao} depends on $n$ and cannot be improved by more computation.


The main purpose of the present paper is to introduce a new general framework of uniform generation of combinatorial structures. We will  apply it in this paper to the special, nevertheless particularly interesting, case of random $d$-regular graphs on $n$ vertices. The result is an algorithm, \NameA, effective for   $d=o(\sqrt n)$. This significantly improves the bounds on $d$ in~\cite{MWgen}.
\NameA\  is an exactly uniform sampler and thus there is no uncontrollable distortion as in~\cite{BKS,KV, SW,Zhao}. Moreover, its expected running time   per graph generated is $O(nd^3)$ (the same as~\cite{MWgen}), which remains quite practical, comparing favourably with the quite impractical running time bounds of MCMC samplers.
\begin{thm}\lab{thm:uniform}
 Algorithm  \NameA\  generates $d$-regular graphs uniformly at random. 
\end{thm}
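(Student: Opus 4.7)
The goal is to prove that $\pr(\NameA \text{ outputs } G)$ is the same for every simple $d$-regular graph $G$ on $n$ vertices. Since in the Bollob\'as pairing model each simple $d$-regular graph corresponds to exactly $(d!)^n$ pairings, it suffices to work at the level of pairings and show that every pairing projecting to a simple graph is produced with the same probability.

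My plan is to view \NameA\ as a random walk on pairings stratified into classes $\classes_i$ indexed by defect profile (counts of loops, double edges, triple edges, and so on), with the ``simple'' stratum $\classes_0$ being the absorbing target. Transitions between adjacent strata are realised by switching operations of McKay--Wormald type (refined as needed), each of which removes one unit of defect, together with rejection steps that either abort the run or commit. For a fixed target pairing $P\in\classes_0$, I would expand $\pr(\NameA \text{ outputs } P)$ as a sum over all trajectories $\classes_{i_0}\to\classes_{i_1}\to\cdots\to\classes_0$ ending at $P$, of the product of per-step selection probabilities and non-rejection probabilities. The crux of the argument is a \emph{class-symmetry} identity: for each adjacent pair $(\classes_i,\classes_j)$ and each $P'\in\classes_j$, the total switching weight from $\classes_i$ arriving at $P'$ depends only on the class labels $i,j$ and not on the specific $P'$. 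Granted this identity, the rejection probabilities are calibrated precisely to cancel the residual class-level imbalance, and a telescoping argument collapses the sum to a quantity depending on $G$ only through the universal factor $(d!)^n$, yielding uniformity.

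The class-symmetry identity itself should reduce to a straightforward double-counting of switchings between pairings of prescribed defect types, essentially because a switching is specified by an edge/loop selection whose number of realisations depends only on the defect type it is applied to. The main obstacle I foresee is showing that the prescribed rejection probabilities all lie in $[0,1]$, i.e.\ that the worst-case per-pairing switching weight is dominated by a uniform normalising constant for each class pair. This in turn reduces to sharp estimates of the ratios $|\classes_i|/|\classes_{i'}|$ for adjacent strata, obtained via asymptotic enumeration of pairings with specified defect profiles. I expect the hypothesis $d=o(\sqrt n)$ to enter exactly here, since beyond that regime the expected number of defects grows too rapidly for the switching--rejection scheme to keep the rejection probabilities bounded away from $1$ while producing a sampler of the claimed $O(nd^3)$ expected running time.
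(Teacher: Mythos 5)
There is a genuine gap: the argument you sketch is a correctness proof for the McKay--Wormald algorithm \emph{DEG}, not for \NameA. You assume the walk moves through strata monotonically (``each of which removes one unit of defect''), so that the output probability telescopes along trajectories $\classes_{i_0}\to\cdots\to\classes_0$ with strictly decreasing defect count. But the whole point of \NameA\ is that its double-edge phase deliberately includes switchings that do \emph{not} decrease the number of double edges (type I class B keeps it fixed; type II increases it), precisely to narrow the gap between $f_\tau(P)$ and its worst-case bound and so push $d$ past $n^{1/3}$. The resulting Markov chain has cycles, a given pairing can be visited more than once, and the property ``conditional on being in $\state_i$ the current pairing is uniform on $\state_i$'' fails. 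The paper's proof therefore abandons the step-by-step conditional-uniformity induction and instead works with the expected number of visits $\sigma(P)$ to each pairing, computed from the fundamental matrix $(I-Q)^{-1}$ of the chain; uniformity of the output reduces to showing $\sigma(P)$ depends only on the stratum of $P$, which in turn requires choosing the type-selection probabilities $\rho_\tau(i)$ as a solution of a nontrivial system of recurrences (and separately verifying that a solution with $\rho_I(i)+\rho_{II}(i)\le 1$ exists). None of this machinery appears in your proposal, and without it the telescoping sum over trajectories does not close.

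Two further points. First, your ``class-symmetry identity'' --- that the total switching weight from $\classes_i$ into a fixed $P'\in\classes_j$ depends only on $(i,j)$ --- is false as a raw counting statement: the number $b_\alpha(P')$ of switchings producing $P'$ genuinely varies over $P'\in\classes_j$ (it depends on counts of 2-paths, adjacencies, etc.\ in $P'$), which is exactly why the algorithm performs a b-rejection with probability $1-\LB_\alpha(j)/b_\alpha(P')$; the identity is \emph{manufactured} by the rejection, not a combinatorial fact that the rejection then refines. Second, the uniformity theorem needs neither estimates of the ratios $|\classes_i|/|\classes_{i'}|$ nor the hypothesis $d=o(\sqrt n)$: the parameters $\UB_\tau(i)$ and $\LB_\alpha(i)$ are defined as valid worst-case bounds, so all rejection probabilities lie in $[0,1]$ by construction, and the theorem as stated carries no constraint on $d$ (the paper handles the degenerate case where the $\rho$-system has no admissible solution by forcing $D(P)=0$ at the initial-rejection stage). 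Enumeration estimates and the restriction on $d$ enter only in bounding the rejection probabilities and the expected running time, i.e.\ in Theorems~\ref{t:complexity} and~\ref{thm:approx}, not here.
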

\begin{thm}\lab{t:complexity}
\NameA\ can be implemented so that for  $1\le d=o(\sqrt{n})$,  the expected time complexity for generating  a graph  is $O(nd^3)$.
\end{thm}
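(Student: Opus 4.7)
The plan is to decompose the expected running time per output graph into contributions from the distinct phases of \NameA: generating a uniformly random pairing, detecting and switching away loops and multiple edges, and handling the restart/rejection mechanism that enforces exact uniformity in Theorem~\ref{thm:uniform}. I would bound each of these contributions separately under the hypothesis $d=o(\sqrt n)$ and combine them to obtain the stated $O(nd^3)$ bound.

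First, sampling a uniformly random pairing on $nd$ points takes $O(nd)$ time by the standard sequential construction, and with appropriate data structures (adjacency hash tables together with a list of current defects) all loops and multiple pairs in the resulting pairing can be enumerated in $O(nd)$ time as well. By the classical asymptotic formulas, the expected number of such defects is $O(d^2)$. Second, each switching used to remove a defect rearranges a constant number of pairs; verifying that it does not create a new defect requires scanning a constant number of degree-$d$ neighbourhoods, which costs $O(d)$ time per switching with hash-based lookups. Since $O(d^2)$ switchings suffice in expectation to remove the $O(d^2)$ defects, the total switching cost per pass through the algorithm is $O(d^3)$. Third, I would argue that the probability that a single pass succeeds (i.e., does not trigger a global restart) is bounded below by a positive absolute constant, so that the expected number of passes per output graph is $O(1)$. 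Adding the three contributions yields $O(nd)+O(d^3)$, which is absorbed into $O(nd^3)$ for all $d\ge 1$.

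The main obstacle is establishing that the success probability of a single pass remains bounded below by a positive constant uniformly in $n$ throughout the full range $d=o(\sqrt n)$. As $d$ approaches $\sqrt n$, the number of defects and the amount of interaction between successive switchings both grow, so the probabilistic invariants exploited in the correctness proof have to be combined with sharp size estimates for the switching classes to show that the aggregate rejection probability does not tend to $1$. I expect the argument to rely on concentration inequalities for the counts of short cycles and multiple edges in a random pairing, together with conditional-probability estimates that track how the cardinalities of the relevant switching classes evolve as successive switchings are performed; these size estimates would themselves be available from the proof of Theorem~\ref{thm:uniform}, so the complexity analysis can be framed as a relatively lightweight accounting argument on top of the structural bounds already established.
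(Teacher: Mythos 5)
There is a genuine gap: your accounting omits the single computation that actually dominates the running time and is the source of the $O(nd^3)$ bound, namely the evaluation of $b_\alpha(P')$ --- the number of class-$\alpha$ switchings that could have \emph{produced} the current pairing --- which is needed at every switching step to carry out the b-rejection with probability $1-\LB_\alpha(i')/b_\alpha(P')$. Your three contributions (sampling the pairing, validating and performing each switching, and the restart overhead) cover only the ``forward'' side of a switching step; they would yield an $O(nd)$ bound, and the fact that your analysis appears to prove something far stronger than the claimed $O(nd^3)$ is the tell-tale sign that a cost has been dropped. Computing $b_A(P')$ requires counting pairs of 2-paths between specified vertices subject to a list of coincidence and adjacency exclusions, and the paper's implementation (following \cite{MWgen}) does this by maintaining data structures recording, e.g., the number of 3-paths joining each pair of vertices; these take $O(nd^3)$ time to initialise, $O(d^2)$ to update per switching step, and support an inclusion--exclusion evaluation of $b_A(P')$ in constant time per case. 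The $O(nd^3)$ initialisation is precisely where the theorem's bound comes from. (The paper also observes that $f_\tau(P)$ need \emph{not} be computed: one samples the switching components uniformly from the $\UB_\tau(i)$ nominal choices and f-rejects on invalidity, so that part of your analysis is fine.)

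Separately, you identify the lower bound on the per-pass success probability as the main obstacle, but in the paper this is not where the difficulty lies: the initial acceptance probability is bounded below via a Markov-inequality computation of the expected number of double edges (Lemma~\ref{l:initial}), and the f-, b- and t-rejection probabilities within the phases are shown to be $o(1)$ by the switching-based expectation estimates already established for Theorem~\ref{thm:uniform} (Lemmas~\ref{lem:drejt}, \ref{lem:drejf}, \ref{lem:drejb} and their analogues for phases 1 and 2), not by new concentration inequalities for short-cycle counts. Your instinct that these bounds are ``available from the proof of Theorem~\ref{thm:uniform}'' is correct, but the complexity proof is not a lightweight accounting argument on top of them --- the substantive content is the data-structure implementation of the backward count.
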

    The same time complexity seems likely to apply under the weaker assumption that $d=O(\sqrt{n})$, as one might expect comparing with the constraint $d=O(n^{1/3})$ in~\cite{MWgen}. However, this is not proved here, the difficulty being that the generation method is different and the analysis is now much more intricate.  
  
   In some applications, one  might  care more for a low time complexity than a perfect uniform sampling. As a byproduct of Theorem~\ref{thm:uniform}, by omitting  a particular feature of \NameA\ that requires excessive computation,  we will obtain  a simpler, linear-time algorithm  (that is, linear in the output size, which is the number of edges)  
 called  \NameB, which approximately generates a random $d$-regular graph.  \NameB\  will be defined in Section~\ref{sec:main}.  The total variation distance between the distribution of the output of  \NameB\  and the uniform distribution is bounded as follows.

\begin{thm}\lab{thm:approx}
Algorithm   \NameB\  randomly generates a $d$-regular graph whose total variation distance from the uniform distribution is $o(1)$, for any $d=o(\sqrt{n})$. Moreover, the expected number of steps required for generating a graph  is  $O(dn)$.
 \end{thm}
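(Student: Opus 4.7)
The plan is to derive Theorem~\ref{thm:approx} directly from the structural analysis that will underlie the proof of Theorem~\ref{thm:uniform}. Since \NameB\ is obtained from \NameA\ by removing the step responsible for equalising output probabilities, its distribution on simple $d$-regular graphs should be a mild, quantifiable perturbation of the uniform one, controlled by the acceptance/switching probabilities that \NameA\ uses to enforce uniformity.

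First I would describe the output distribution of \NameB\ explicitly. Like \NameA, it begins by sampling a uniform pairing and then clears loops and multi-edges by a sequence of switchings. In \NameA\ each switching is followed by a carefully chosen acceptance/rejection that forces the final distribution to be uniform on simple $d$-regular graphs; \NameB\ simply omits this compensating step. Consequently each simple $d$-regular graph $G$ is produced by \NameB\ with probability proportional to a weight $w(G)$ determined by the switching history (essentially counts of local configurations encountered along the way), and the comparison to the uniform distribution reduces to bounding the oscillation of $w(G)$ over $G$.

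Second, I would bound the total variation distance between this weighted distribution and the uniform one. For $d=o(\sqrt n)$, standard pairing-model results (Bollob\'as; McKay--Wormald) imply that the numbers of loops and multi-edges in a uniform random pairing are asymptotically independent Poisson variables with bounded means, and are $O(d^2)$ with probability $1-o(1)$. The switching history therefore has length $O(d^2)=o(n)$, and an analysis analogous to the classical $(1-o(1))$-asymptotics for simplicity probabilities should show that the per-switching relative weight distortions are $O(d/n)$, so the weights $w(G)$ all lie in a range $(1\pm o(1))\bar w$ on the event that the pairing is typical. Pairings with atypically many loops or multi-edges have probability $o(1)$ and contribute at most an additive $o(1)$ to the TV distance, yielding the claimed bound.

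For the time complexity, the omitted feature of \NameA\ is precisely the step identified as responsible for the $O(nd^3)$ bound. What remains costs $O(dn)$ to generate the initial pairing, $O(dn)$ to locate all bad configurations, and $O(1)$ per switching for $O(d^2)=O(dn)$ switchings, giving $O(dn)$ expected total time. The main obstacle I anticipate is making the weight bound rigorous: I must verify that the weights $w(G)$ genuinely lie in a $1\pm o(1)$ range rather than merely in a bounded range, which requires a careful multiplicative accounting of the per-switching distortions together with a union bound over the rare pairings whose histories are atypical. This argument should parallel, but be strictly simpler than, the full analysis needed for the exact uniformity statement of Theorem~\ref{thm:uniform}.
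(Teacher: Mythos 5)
Your proposed route is genuinely different from the paper's, and it contains a gap that I do not think you can close for the full range $d=o(\sqrt n)$. You propose to describe \NameB's output distribution explicitly as a weighted distribution with weights $w(G)$ accumulated along the switching history, and then to show all weights lie in a $(1\pm o(1))$ window. This is essentially the Steger--Wormald/Kim--Vu style of analysis, and it is known to be delicate: your own accounting gives a per-switching relative distortion of $O(d/n)$ over $O(d^2)$ switching steps, which compounds to $\exp(O(d^3/n))$. For $d$ approaching $\sqrt n$ (say $d=n^{0.4}$) this is $\exp(n^{0.2})$, nowhere near $1\pm o(1)$, so the multiplicative accounting you sketch fails exactly in the regime the theorem is claimed for. (Bayati et al.\ do reach $d\le n^{1/2-\eps}$ with a TV bound, but by a considerably more involved argument, not by uniform control of pointwise weights.) Your claim that this should be ``strictly simpler'' than the exact-uniformity analysis has it backwards.

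The paper's proof avoids all of this by a coupling with \NameA. Run \NameB\ and \NameA\ on the same initial pairing and the same random choices. The two algorithms diverge only when \NameA\ performs an f-, t- or b-rejection, or performs a type II switching in phase 3; by Lemmas~\ref{lem:drejt},~\ref{lem:drejf},~\ref{lem:drejb} (and their analogues for phases 1 and 2), together with $\rho_{II}(i)=O(d^2/n^2)$ and the $O(d^2)$ bound on the number of steps, all of these events have probability $o(1)$. Since initial acceptance has probability bounded away from $0$ (Lemma~\ref{l:initial}) and \NameA's output is exactly uniform (Theorem~\ref{thm:uniform}), the coupling immediately gives total variation distance $o(1)$, with no need to understand the weights $w(G)$ at all. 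The missing idea in your proposal is precisely this use of the already-proved exact sampler as a reference measure. Your time-complexity accounting ($O(dn)$ for the initial pairing and acceptance test, $O(d^2)$ switching steps at $O(1)$ each) does match the paper's and is fine.
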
 
 This improves the running time $O(nd^2)$ of~\cite{BKS}, and the range $d=o(n^{1/3})$ of~\cite{Zhao}, while achieving a  quality of output distribution similar to both. A similar modification to the algorithm of~\cite{MWgen} would achieve the same result, provided that $d=O(n^{1/3})$.

We outline our general framework in Section~\ref{sec:framework}. The framework includes several operations and parameters that will be defined in accordance with the types of combinatorial structures to be generated.  For the application in the present paper, this framework is used in each of the three phases of the main algorithm, called \NameA, that is a uniform sampler for $d$-regular graphs on $n$ vertices.  The framework requires   operations and parameters to be defined
 for the various phases of \NameA. These are given in Sections~\ref{sec:switchings} and~\ref{sec:rho2} and at the beginning of Sections~\ref{sec:UBLB},~\ref{loopreduction} and~\ref{triplereduction}.   The  full structure of this paper is explained at the end of the following section, after the new ideas have been discussed in relation to the algorithm {\em DEG} of~\cite{MWgen}.

 We note that several papers  
 have      adapted the   approach of~\cite{MWgen} for generation  of other structures (e.g.\ McKay and Wormald~\cite{genlatinrect}),   and also for enumeration (e.g.\ Greenhill and McKay~\cite{GM}). Such works have not led to   improvements in   the result~\cite{MWgen} achieves for $d$-regular graphs. We expect the ideas introduced here will filter out to improved results for several kinds of structures, including graphs with non-regular   degree sequences. Such issues will be  examined  elsewhere.

\section{The old and the new}
\lab{MW}
  
In this section we first summarise the procedure {\em DEG} used  in~\cite{MWgen}, which provides some of the foundations required for applying our method to regular graphs. We then highlight the new ideas used in \NameA, and give a skeleton description of that algorithm. Finally, we describe the layout of the paper in relation to exposing the new framework and defining and analysing   \NameA\ and \NameB.

For generating random  graphs with given degrees, we use the {\em pairing model}, first introduced in~\cite{B}, defined as follows.  Let  ${\bf d} = (d_1,\ldots,d_n)$ be a degree sequence (thus $\sum_{i=1}^n d_i$ is always assumed to be even). 
Represent each vertex $i\in [n]$ as a bin $v_i$. Place $d_i$ distinct points in bin $v_i$ for every $1\le i\le n$. Take a uniformly random perfect matching of the $\sum_i d_i$ points. This perfect matching   is called a {\em pairing}; each pair of points joined in the matching is called a {\em pair} of the pairing. Note that each pairing $P$ corresponds to a multigraph with degree sequence ${\bf d}$, denoted by $G(P)$, obtained by regarding each pair in the pairing as an edge. Moreover, by a simple counting argument, we see that every simple graph of degree sequence {\bf d} corresponds to the same number of pairings. Thus, letting $\Phi$ denote the whole set of pairings and   $\simple\subseteq \Phi$  the set of pairings corresponding to simple graphs, if an algorithm can   generate   a pairing $P\in \simple$ uniformly at random, then $G(P)$ has the uniform distribution over all graphs with degree sequence {\bf d}. 

 Given a pairing $P$, and two  vertices $i$ and $j$, the set of pairs between $i$ and $j$ in $P$, if non-empty, is called an {\em edge}, i.e.\ edge $ij$, and the size of the set is the {\em multiplicity} of $ij$. If  the multiplicity of $ij$ is 1, then $ij$ is a {\em single} edge; otherwise it is a {\em multi-edge}. In particular, we say $ij$ is a {\em double edge},   a {\em triple edge}, or a {\em quadruple edge}, if  its multiplicity is two, three, or four respectively. An edge $ij$ with $i=j$ is called a {\em loop} at $i$.  
  \medskip

 \no {\bf  Outline of {\em DEG}}
 \smallskip

    The   algorithm {\em DEG} begins by generating a uniformly random pairing $P_0\in \Phi$. An appropriate set $\acceptableMW\subseteq \Phi$ is pre-defined, such that pairings in $\acceptableMW$ have no multi-edges other than  double non-loop edges, and have   limited numbers of double edges and loops.    If  $P_0\notin \acceptableMW$, then the algorithm terminates.  We call this an {\em initial rejection}. The set  $\acceptableMW$ is chosen in such a way that $\pr(P_0\in \acceptableMW)$ is bounded away from 0. This is possible when $d=O(n^{1/3})$.
If no initial rejection occurs, two phases are applied in turn. In the first phase, called {\em loop  reduction},   $P_0$ is used to obtain a uniformly random pairing $P_1\in \acceptableMW$ with no loops  but the same number of double edges as $P_0$. Then, starting with $P_1$, the algorithm enters the  {\em double-edge reduction} phase, and obtains a uniformly random pairing $P_2\in\simple$. (Recall that  $\simple$ denotes the set of  pairings   associated with simple graphs.) Thus, in each phase, the number of undesirable structures (which are in turn loops and double edges) is reduced to 0. Termination is also possible during the phases, due to rejection aimed at maintaining uniformity. 
In that case, no pairing is outputted by the algorithm. Otherwise, the pairing outputted by the last phase is the output of the algorithm. Naturally, the algorithm is repeated until output occurs.

For exposition purposes, we focus on the second phase, which starts from a pairing $P_1\in \acceptableMW$. In particular, $P_1$ has   no loops and at most $\imax$ double edges, where $\imax$ is specified in the definition of $\acceptableMW$.   Then $P_1$ is distributed uniformly at random (u.a.r.) in $\bigcup_{0\le i\le \imax}\state_i$, where   $\state_i$ denotes the set of   pairings in $\acceptableMW$ containing exactly $i$ double edges. (In the present paper, these sets are called {\em \groups}.)  We will describe how, with probability bounded away from 0, we can use  $P_1$ to u.a.r.\ generate a pairing in $\state_0$ (note that $\state_0=\simple$ in the second phase).

Initially, the algorithm sets $P=P_1$.  There is an inductive step  which  assumes that, conditional on    $P\in \state_i$, $P$ occurs  uniformly at random  in $\state_i$.  The algorithm then randomly performs a certain kind of operation called a {\em switching}, that produces a pairing $P'\in \state_{i-1}$, and then $P$ is reset equal to $P'$. (The definition of the switching is perhaps not necessary at this point, but the curious reader may consult   Figure~\ref{f:doubleI}, Section~\ref{sec:switchings}.)
The general idea is to reject $P'$ with a small probability, which is a function of $P$ and $P'$, so that $P'$ becomes a uniformly random member of $\state_{i-1}$ if not rejected. 
This process is iterated until reaching some  $P \in\state_0$ which is then outputted  as $P_2$ by {\em DEG}. By induction, $P_2$ is uniformly distributed over $\state_0$.

We next consider the probability of rejection, which is crucial.
Switchings are defined in such a way that the number $f(P)$ of switchings that can be performed on $P$ depends only weakly on anything other than  how many double edges $P$  contains. A parameter $m_f(i)$ is specified such that
$$m_f(i)\ge \max_{P\in \state_i}f(P).$$
In discussing the inductive step, we condition on the event $P\in \state_i$. Firstly one of the $f(P)$ switchings is chosen u.a.r., and with probability
$f(P)/m_f(i)$
the switching is performed, to obtain $P'\in   \state_{i-1}$. Otherwise, with the remaining probability, the algorithm terminates. We call this termination an {\em f-rejection} (where `f' stands for `forward'). Since $P$ is uniformly distributed in $\state_i$, the probability that $P'\in \state_{i-1}$ is generated at this point by the switching  is $|\state_i|^{-1} m_f^{-1}b(P')$, where $b(P')$ is the number of switchings that lead to $P'$ from pairings in $\state_i$. At this point, the algorithm accepts $P'$ with probability $ m_b(i-1)/b(P')$ where $m_b(i-1)$ is a pre-determined parameter satisfying
$$m_b(i-1)\le\min_{P''\in \state_{i-1}}b(P'').$$
If     $P'$ is not accepted, the algorithm terminates, and this is called a {\em b-rejection} (where `b' stands for `backward').
The probability that a given pairing $P'\in \state_{i-1}$ was produced is now
$|\state_i|^{-1} m_f(i)^{-1}m_b(i-1)$, which does not depend on $P'\in \state_{i-1}$. Thus, if it reaches this point, the algorithm has generated a uniformly random member of $\state_{i-1}$, and the inductive step is finished.


The range of applicable degree sequences for {\em DEG} is determined by the probability of rejection at some time during the algorithm. In~\cite{MWgen}, the bound $d=O(n^{1/3})$ for $d$-regular graphs could not be weakened because the probability of rejection would get too close to 1. This is caused by the probability of f-rejection becoming too large, due to the increasing gap between the typical value of $f(\P)$ and its maximum, $m_f$.

The first phase, loop reduction, is similar, except that of course a different switching is used. There are less loops 
than double edges in expectation, so the crucial phase to improve, in order to relax the upper bound on $d$, is the second phase.

\medskip

\no {\bf New features in \NameA}
\ss

 Our new approach extends that used in {\em DEG}, introducing some major new  features in both the algorithm specification  and its analysis, employed specifically in the double-edge reduction phase. For one thing, we narrow the gap between $m_f$ and the average value of $f(P)$  by permitting certain switchings,  called class B,  that  do not have the desired effect on the number of double edges. The total number of permitted switchings is then less dependent on $P$, and $f(P)$ increases on average. The result is a lower probability of   f-rejection. The other new feature of the algorithm, which we call {\em boosting}, raises the value of $m_b$ by occasionally performing a different type of switching that targets the creation of some otherwise under-represented elements of a set $\C_i$.  This  reduces the probability of having a b-rejection. These changes necessitate  several associated alterations to the algorithm. The most notable alterations are: (i) the algorithm no longer  proceeds through the sets   $\C_i$ step-by-step, decreasing $i$ by 1 at each step, though this is still the most common type of step; (ii) unlike in~\cite{MWgen}, the probability that a pairing is reached in the algorithm (at all, or at a given step) no longer depends only on the set $\C_i$  to which it belongs; (iii) not all switchings to pairings in such a set  will be performed with the same probability. As a result of these changes, the analysis of the algorithm is entirely different.   In particular, we are forced to relinquish maintaining the property that, at each step, conditional upon the current pairing being within the set $\C_i$, it is distributed uniformly in that set (except for the case $i=0$).  Instead, we focus on the  expected numbers of visits to the states in the associated Markov chain. 

\medskip

\no {\bf The rest of the story}
\ss 

As in~\cite{MWgen},   a set $\acceptable\subseteq \Phi$  is specified such that $\pr(\acceptable)$ is bounded away from zero. Here, $\gamma>0$ is a pre-determined constant. It can be altered for better performance of the algorithm for particular $n$ and $d$ using the results of this paper.  A uniformly random pairing $P\in\Phi$ is generated, and   initial rejection is performed if   $P\notin\acceptable$. Pairings in $\acceptable$ will in general contain loops, and double and  triple non-loop edges, but no other multi-edges. Three phases are performed  sequentially, for reduction of loops,   then  triple edges,  and finally  double edges.

Since the new  features of \NameA\   will be useful in other contexts, we   set up a general framework for the description of a phase in Section~\ref{sec:framework}. This new framework results in a different analysis from~\cite{MWgen}, which will be given in Section~\ref{s:phase}, with a glossary provided as Section~\ref{s:glos}. The proof that each phase ends with a uniformly random object of the required type is rather involved, so an example appropriate to the double-edge reduction phase is given in Section~\ref{sec:rho}. This includes an illustration of how to set some of the parameters of a phase appropriately. Since the only nontrivial phase in \NameA\  is for double-edge reduction, the definitions of the switchings and other parameters  in this phase, and the analysis required for bounding the time complexity, is done in Section~\ref{sec:double}.

With this out of the way, the basic anatomy of \NameA\ is completed in Sections~\ref{sec:A}  and     
   Section~\ref{sec:loop-triple}.
In Section~\ref{sec:A}, we  specify the set $\acceptable$ of pairings that do not trigger initial rejection, and bound the probability of an initial rejection. Then, in Section~\ref{sec:loop-triple}, we define the first two phases, for reductions of  loops and of triple edges respectively. These two phases are much simpler than phase 3. The switchings employed in these two phases are the same as used in~\cite{MWgen}, and are defined in Section~\ref{sec:loop-triple}.  The analysis is similar to that in~\cite{MWgen}, though we now have a higher upper bound on degree. The probability of a rejection occurring in these phases is also bounded.

Finally in Section~\ref{sec:main}, we prove the main theorem by bounding the expected running time. This is basically determined by the task of  computing the probability of rejection, in particular $b(P)$. By ignoring rejections (carrying on regardless), we will obtain the approximate sampler \NameB\ in Section~\ref{sec:main}. We prove that \NameB\ runs in linear time in expectation for generating one random regular graph, and   bound the total variation distance between the distribution of the output of \NameB\ and the uniform distribution.

\section{General description of  a phase}
\lab{sec:framework}



  We present here the definition of a phase  that will be common to any application of our approach to reducing the number of occurrences of an undesired configuration  (for instance, a double edge), by using repetitions of operations, called switchings here, that can be defined to suit the application.

A phase begins with a set $\Phi_1$  partitioned into sets $\bigcup_{0\le i\le\imax}\state_i$, called {\em \groups}, for some integer $\imax\ge 0$.    For each $P\in\Phi_1$, set $\state(P)= i$ if $\state_i$ contains $P$. A set of  possible operations, called switchings, is specified. Each switching converts some   $P\in \Phi_1$ to another element of $\Phi_1$. Each switching has both a   {\em type}   and a  {\em class}. The set of possible types is denoted $\types$, and the set of possible classes is denoted $\classes$. The phase begins with a random element $P\in \Phi_1$ with a known distribution $\pi_0$, and either outputs a uniformly random element of $\C_0$, or terminates with no output (rejection). The parameters specified for the phase   are 
 $ \rho_\tau(i)$,  $\UB_\tau(i)$ and $\LB_\alpha(i)$,  for each switching type $\tau$ and class $\alpha$, and each $i\le \imax$.

The parameters  $ \rho_\tau(i)$ satisfy $\sum_\tau  \rho_\tau(i)\le 1$ for each $i$.   If an element $P\in\state_i$ ($i>0$) arises during the phase, a switching type $\tau$ is chosen with probability $\rho_{\tau}(i)$.\label{def:rho} If $\sum_{\tau}\rho_{\tau}(i)<1$, a rejection will occur with the remaining probability; we call this a t-rejection (where `t' stands for `type').

  Given a switching type $\tau$ and an element $P$, the  number of switchings of type $\tau$ that can   be applied to   $P$ is denoted by $f_\tau(P)$\label{def:f}.  The parameters $\UB_\tau(i)$ satisfy 
  $$
\UB_\tau(i)\ge \max_{P\in \C_i}f_\tau(P).\label{def:UB}
$$ 
Similarly,  for a switching class $\alpha$, the  number of switchings of class $\alpha$ that can be applied to other elements to produce  $P$ is denoted by $b_\alpha(P)$\label{def:b}.
The parameters $\LB_\alpha(i)$ satisfy 
$$
\LB_\alpha(i)\le\min_{P\in \C_i}b_\alpha(P).\label{def:LB}
$$ 

The phase consists of repetitions of a {\em switching step}, specified as follows.
\bigskip

 Given $P\in \C_i$,  
 \begin{description} 
 \item{(i)} If $i=0$, output $P$.
 \item{(ii)}  Choose a type: choose $\tau$ with probability $\rho_\tau(i)$, and with the remaining probability, $1-\sum_\tau \rho_\tau(i)$,  perform a t-rejection. Then select u.a.r.\ one of the type $\tau$ switchings that can be performed on $P$.
\item{(iii)} Let   $P'$ be the element that the selected switching would produce if applied to $P$, let $\alpha$ be the class of the selected switching   and let   $i'=\state(P')$. Perform an f-rejection with probability $1-f_{\tau}(P)/\UB_{\tau}(i)$ and then perform a b-rejection with probability $1-\LB_{\alpha}(i')/b_{\alpha}(P')$;
\item{(iv)} if no rejection occurred, replace $P$ with $P'$.
\end{description}
The switching step is repeated until the phase terminates, which happens whenever  an element  $P\in \state_0$ is reached or a rejection occurs.  
\smallskip

 Note that in each switching step only a switching type is selected, not a switching class. Only after a switching is chosen in (ii) is the class of the switching determined. 
 
  To complete the definition of a phase, it is sufficient to specify $\Phi_1$, $\imax$, the sets $\state_i$, the set of switchings  and their types and classes, and the numbers $\rho_\tau (i)$, $\UB_\tau(i)$ and $\LB_\tau(i)$.
These parameters will be carefully chosen in such a way that the expected number of times that a given element in $\state_i$ is visited during the phase depends only on $i$. 
Given this, and the fact that termination of the phase occurs as soon as $\state_0$ is reached, it follows that the element outputted is distributed u.a.r.\ from $\state_0$.  
Subject to this, in choosing the parameters we also aim to keep the probability of rejection small.


\section{Glossary}
\lab{s:glos}

For the benefit of the notation-weary reader, we list some terms already defined, and  some soon to be defined.

\bigskip

\noindent
For any element $P$:

$\sigma(P)$ is the expected number of times the algorithm passes through $P$ (defined in Page~\pageref{def:sigma}). 

$f_\tau(P)$ is number of ways that a switching of type $\tau$ can be performed on $P$ (defined in Page~\pageref{def:f}).

$b_\alpha(P)$ is number of  switchings of class $\alpha$ that produce $P$ (defined in Page~\pageref{def:b}).

$P_t$ is the element obtained after step $t$, if   no rejection has occurred. 

$q_{P,P'}$ is the transition probability from $P$ to $P'$.


$\state^-_{\tau,\alpha}(P)$: the set of elements that can produce $P$ using a type $\tau$, class $\alpha$ switching (defined in Page~\pageref{def:state-}).

\bigskip

\noindent
For $P\in \C_i$:
 
$\rho_\tau(i)$ is probability of choosing switching type $\tau$ to apply to $P\in\state_i$ (first appearing in Page~\pageref{def:rho}). 

$\sigma(i)$ is such that $\sigma(P)=\sigma(i)$ for all $P\in\C_i$ under constraints to be enforced (first appearing in~\eqn{equalise}).

$\C(P)$ is another name for $i$. (That is,  the index of the \group\ containing $P$.)

\bigskip

\noindent
Special states  in the Markov chain:

$\reject$: a state denoting that rejection has occurred.

$\output$: each element in $\state_0$   has transition probability 1 to $\output$. \bigskip

\noindent In a phase:

$\Phi_1$: the set of all possible elements arising in the phase.

$\imax$: the maximum integer $i$ such that $\state_i\in\Phi_1$.

$\Tau$: the set of switching types.

$\Alpha$: the set of switching classes.

$\UB_\tau(i)$: pre-determined parameters satisfying $f_{\tau}(P)\le \UB_\tau(i)$ for every $P\in \state_i$ (first appearing in Page~\pageref{def:UB}).

$\LB_\alpha(i)$: pre-determined parameters satisfying $b_{\alpha}(P)\ge \LB_\alpha(i)$ for every $P\in \state_i$ (first appearing in Page~\pageref{def:LB}).

\section{General analysis of a phase}
  \lab{s:phase}
  
   
In this section we lay the groundwork for specification of the predefined  parameters described in Section~\ref{sec:framework}, in such a way that the algorithm performs the desired uniform sampling.   Consider a phase with specified switchings and parameters $\C_i$,  $\rho_\tau (i)$, $\UB_\tau(i)$ and $\LB_{ \alpha }(i)$ for all appropriate $i$ and $\tau$, as well as a specified value of $\imax$.   Assume that 
  \bea
 \rho_{\tau}(i)\ge 0 && \mbox{for all $0\le i\le \imax$ and all $\tau\in \Tau$};\lab{ParamCond1}\\
  \sum_{\tau\in \Tau}\rho_{\tau}(i)\le 1 && \mbox{for all $0\le i\le \imax$};\lab{ParamCond2}\\
 f_{\tau}(P)\le \UB_\tau(i)  && \mbox{for all $0\le i\le\imax$, $P\in \state_i$ and for each $\tau\in\Tau$};\lab{ParamCond3}\\
 b_{\alpha}(P)\ge \LB_{\alpha}(i) && \mbox{for all $0\le i\le\imax$, $P\in \state_i$ and for each $\alpha\in\Alpha$}. \lab{ParamCond4}
  \eea
Recall that $\Phi_1=\bigcup_{i=0}^{\imax} \C_i$. 
 The parameters determine a Markov chain, denoted by $\markov$,  on states $\Phi_1\cup \{\reject,\output\}$, where $\reject$ and $\output$ are two artificially introduced absorbing states. The chain moves from  an element $P$ directly to $\reject$  if rejection occurs  in a switching step from $P$ to $P'$  (rather than to $P'$ being considered at the time of rejection), and  it moves to $\output$ with probability 1 in the next step after reaching any element in $\C_0$.  We refer to the state at step $t$ as $P_t$, permitting $P_t=\reject$ or $\output$.

We make the following assumption about $\markov$:
\bean
&&\mbox{(A1) all states in $\Phi_1$ are transient in $\markov$.} 
\eean 
Let $Q$ be the matrix of transition probabilities between all  states of $\markov$ in $\Phi_1$. Thus  $Q=(q_{P,P'})$  where $q_{P,P'}$   is the transition probability from $P$ to $P'$. 

The transition probability  $q_{P,P'}$  can be computed as follows. Assume $P\in \state_i$ and  that $S$ is a type $\tau$, class $\alpha$ switching that converts $P$ into $P'\in\state_j$. 
Condition  on state $P$ being reached in $\markov$. Then, from part (ii) of the switching step,  the probability that  $S$ is chosen equals $\rho_{\tau}(i)/f_\tau(P)$.
   Hence, from part (iii), the probability $S$ is performed and   neither t- nor f-rejection  occurs  is $\rho_{\tau}(i)/\UB_\tau(i)$. On the other hand,
the probability that b-rejection does not occur is $\LB_{\alpha}(j)/b_{\alpha}(P')$.   Hence,
\be\lab{transitionprob}
q_{P,P'} =\sum_{(\tau,\alpha)}   
N_{\tau,\alpha}(P,P') \frac{\rho_{\tau}(i)\, \LB_{\alpha}(j)}{\UB_{\tau}(i)\, b_\alpha(P')},
 \ee
where $N_{\tau,\alpha}(P,P')$ is the number of switchings of type $\tau$ and class $\alpha$ that convert  $P$ to $P'$.

By assumption (A1), $Q$ is the submatrix of the transition matrix that refers to the transient states. Hence, the matrix  $(I-Q)^{-1}$ exists. Indeed, this matrix is known as the fundamental matrix of $\markov$, and it is clearly equal to $I+Q+Q^2+\cdots$. (An easy argument shows that this series is  convergent because these states are transient.) 
Moreover, the $(P,P')$ entry of $(I-Q)^{-1}$ is clearly the expected number of visits to state $P'$ given that the chain starts in state $P$ (where being in the initial state is counted as a visit). Given a (row) vector
${\vec \pi_0}$ for the initial distribution of the transient states,  define  $\sigma(P)$~\label{def:sigma} to be the expected number of times that $P$ is visited in $\markov$. Then  the   vector $\vec \sigma$, composed of the values $\sigma(P)$, is given by  
\bel{visits}
 {\vec \pi_0}(I-Q)^{-1}=\vs.
\ee
  A key feature of our approach is to specify $\rho_{\tau}(i)$ in such a way that $\sigma(P)$ depends only on $\state(P)$, i.e., there are fixed numbers $\sigma(i)$ ($0\le i\le \imax$) such that
\bel{equalise}
\mbox{for all $i$, and all $P\in \state_i$,}\quad \sigma(P)=\sigma(i).
\ee
To aid in finding such $\rho_\tau$ easily, we require that, for a given switching $S$, the expected number of switching steps  during the phase in which $S$ is chosen in (ii) for which f-rejection does not occur in (iii) is some number    $q_\alpha(j)$ depending only on  the class $\alpha$ of $S$ and the set $\C_j$ containing the element it creates. Considering the derivation of~\eqn{transitionprob}, this is equivalent to requiring that, for all $j$ and $\alpha$,
\be\lab{equaliseclass}
    \frac{\sigma(i)\rho_{\tau}(i) }{\UB_{\tau}(i) }=  q_\alpha(j) \ \mbox{for all $(i,j,\tau,\alpha)\in \quads$}
 \ee
where $\quads$ is the set of all $(i,j,\tau,\alpha)$ for which there exists a switching of type $\tau$ and class $\alpha$ taking an element in $\state_i$ to an element in $\state_j$.

Rewrite~\eqn{visits} as ${\vec \sigma}= {\vec \pi_0} + {\vec \sigma} Q$, and note from~\eqn{transitionprob} that the component of ${\vec \sigma} Q $ referring to (i.e.\ indexed by) $P'\in\state_j$ is
$$
 \sum_P \sigma(P)q_{P,P'} =
   \sum_{(\tau,\alpha,P)}  \sigma(P)  N_{\tau,\alpha}(P,P') 
  \frac{\rho_{\tau}(\state(P))\, \LB_{\alpha}(j)}{\UB_{\tau}(\state(P))\, b_\alpha(P')},
$$
where the sum is over all $\tau$, $\alpha$ and $P$ for which there is at least one type $\tau$, class $\alpha$ switching  that converts $P$ into $P'$. By~\eqn{equalise} and~\eqn{equaliseclass}, this summation is
$$
 \sum_{(\tau,\alpha,P)}  N_{\tau,\alpha}(P,P')  \frac{q_\alpha(j) \LB_{\alpha}(j)}{  b_\alpha(P')} 
 = \sum_{ \alpha}   q_\alpha(j) \LB_{\alpha}(j) 
$$
since   for each $\alpha\in\Alpha$, the number of class $\alpha$ switchings that converts some element $P$ to $P'$ is $\sum_{\tau,P}N_{\tau,\alpha}(P,P')$, which is $ b_\alpha(P')$ by definition. 
Thus, provided that~\eqn{equalise} and~\eqn{equaliseclass} hold,~\eqn{visits} is equivalent to
\bel{sigmasGen}
\sigma(P)= {\vec \pi_0}(P) + \sum_{ \alpha\in\Alpha}   q_\alpha(j) \LB_{\alpha}(j) \ \mbox{for all $j$ and $P\in\state_j$}.
\ee 
Noting that~\eqn{visits} determines $\vec \sigma$, we have proved the following lemma.
\begin{lemma} \lab{l:equalised}
Suppose that for given numbers $\rho_\tau(i)$,  $\LB_{\alpha}(j)$ and $\UB_{\tau}(j)$ ($i,j\in[0, \imax]$, $\tau\in\Tau$, $\alpha\in \Alpha$)  satisfying the conditions~\eqn{ParamCond1}--\eqn{ParamCond4}, there is a simultaneous solution  $(\sigma(i), q_{\alpha}(i))_{0\le i\le \imax}$ to equations~\eqn{equaliseclass} and~\eqn{sigmasGen}. Then, for each $i\in [0,\imax]$, the expected number of visits to any given element in $\state_i$ is $\sigma(i)$. \end{lemma}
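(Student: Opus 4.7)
My plan is to run the derivation already sketched in the text in reverse: the text shows that \emph{if} the equalising condition \eqn{equalise} holds, then \eqn{sigmasGen} follows from \eqn{visits} together with \eqn{equaliseclass}; here I want to show that \emph{if} we have numbers $\sigma(i),q_\alpha(i)$ solving \eqn{equaliseclass} and \eqn{sigmasGen}, then the vector $\vec\sigma$ defined componentwise by $\sigma(P):=\sigma(\C(P))$ \emph{is} the true vector of expected visits. The key tool is uniqueness: by assumption (A1) every state in $\Phi_1$ is transient, so $I-Q$ is invertible and \eqn{visits} has a unique solution, namely ${\vec\pi_0}(I-Q)^{-1}$. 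Thus it is enough to check that the candidate $\vec\sigma$ satisfies $\vec\sigma=\vec\pi_0+\vec\sigma Q$.

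First I would define $\vec\sigma$ by $\sigma(P)=\sigma(i)$ for $P\in\state_i$, using the given solution values $\sigma(i)$. Then I would compute the $P'$-component of $\vec\sigma Q$ for arbitrary $P'\in\state_j$. Using \eqn{transitionprob}, this component equals
\[
\sum_{P}\sigma(P)q_{P,P'}=\sum_{(\tau,\alpha,P)} \sigma(\C(P))\,N_{\tau,\alpha}(P,P')\,\frac{\rho_\tau(\C(P))\,\LB_\alpha(j)}{\UB_\tau(\C(P))\,b_\alpha(P')}.
\]
Applying \eqn{equalise} (which now holds by construction of $\vec\sigma$) and \eqn{equaliseclass} to collapse $\sigma(\C(P))\rho_\tau(\C(P))/\UB_\tau(\C(P))$ into $q_\alpha(j)$, and using $\sum_{\tau,P}N_{\tau,\alpha}(P,P')=b_\alpha(P')$ by the definition of $b_\alpha$, the sum simplifies to $\sum_{\alpha\in\Alpha}q_\alpha(j)\LB_\alpha(j)$. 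This computation is exactly the one already performed in the paragraph preceding \eqn{sigmasGen} and does not need to be repeated in detail.

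Finally, \eqn{sigmasGen} gives $\sigma(j)=\vec\pi_0(j)+\sum_\alpha q_\alpha(j)\LB_\alpha(j)$, where $\vec\pi_0(j)$ is the common value of the initial distribution on any element of $\state_j$ (the hypothesis implicitly requires $\vec\pi_0$ to be constant on each $\state_j$, since otherwise no solution with $\sigma(P)$ constant on $\state_j$ could exist). So the $P'$-component of $\vec\pi_0+\vec\sigma Q$ equals $\sigma(j)=\sigma(P')$, which confirms that $\vec\sigma=\vec\pi_0+\vec\sigma Q$. By the uniqueness of the solution to \eqn{visits}, the candidate $\vec\sigma$ coincides with the true expected-visit vector, proving the lemma.

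There is no real obstacle here; the content of the lemma is essentially a uniqueness statement packaging the forward computation already done in the section. The only subtlety worth flagging explicitly is invoking (A1) to justify the existence and uniqueness of $(I-Q)^{-1}$, so that matching the fixed-point equation componentwise is enough to identify $\vec\sigma$.
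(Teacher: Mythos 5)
Your proposal is correct and follows essentially the same route as the paper: the computation of the $P'$-component of $\vec\sigma Q$ is exactly the derivation preceding the lemma, and the paper's closing remark that ``\eqn{visits} determines $\vec\sigma$'' is precisely your uniqueness argument via the invertibility of $I-Q$ guaranteed by (A1). Your side remark that $\vec\pi_0$ must be constant on each $\state_j$ is consistent with the paper, which assumes immediately afterwards that $\pi_0$ is uniform over $\Phi_1$.
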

Henceforth in this paper, we assume that the initial distribution is uniform over $\Phi_1$, that is, ${\vec \pi_0}(j)=|\Phi_1|^{-1}$ for all $j\in \Phi_1$.

 \no {\bf Remark.}  Consider the case that there is exactly one type $\tau$ and one class $\alpha$ of switchings involved in a phase, and  each switching converts an element in $\state_{i+1}$ to another element in $\state_{i}$ for some $i\ge 0$. Then  we may simply set $\rho_{\tau}(i)=1$ for every $0\le i\le \imax$. Clearly (A1) is satisfied, as there is no cycling in the Markov chain, and~\eqn{equaliseclass} and~\eqn{sigmasGen} combine to give
\[
\sigma(j)=\frac{1}{|\Phi_1|}+\frac{\sigma(j+1)}{\UB_{\tau}(j+1)}\cdot \LB_{\alpha}(j).
\]
Uniformity is guaranteed as $\sigma(j)$ can be inductively computed from $\sigma(j+1)$. This inductive approach is exactly the  essence of {\em DEG} described in  Section~\ref{MW}, and hence the method in~\cite{MWgen} is a special case of our general method, obtained by setting $\rho_{\tau}(i)=1$. It is the possibility of using different types and classes of switchings, the flexibility of setting non-trivial values to $\rho_{\tau}(i)$, and the much more flexible choice of Markov chains permitting cycling, that provides the power of the approach in the present paper. 
 
  \section{An example: calculating $\rho$ and proving uniformity}
\lab{sec:rho}

The parameters $\UB_{\tau}(i)$ and $\LB_{\alpha}(i)$ will be  specified, depending on the particular application, such that~\eqn{ParamCond3} and~\eqn{ParamCond4} are satisfied. The tightness of these bounds on $\max f_{\tau}(P)$ and $\min b_{\alpha}(P)$ effectively influence the efficiency of the phase, as tighter bounds yield smaller rejection probabilities in substep (iii) of a switching step. However, a major task of the design of the phase is to set $\rho_{\tau}(i)$ properly to ensure~\eqn{equaliseclass}, as well as to minimize the probability of a t-rejection. We achieve this by deducing a system of equations and inequalities that the parameters $\rho_{\tau}(i)$ and the variables $\sigma(j)$ must satisfy. Then, we find a desirable solution to the system, bearing in mind the rejection probabilities, and set the value of $\rho_{\tau}(i)$ accordingly. We illustrate this by considering a particular example, developing the analysis in the previous section, which we can use later since it applies to phase 3 of \NameA, where double edges are eliminated.

 We assume that $\Phi_1=\cup_{i=0}^{\imax} \state_i$ and all \groups\ have been specified, as well as parameters $\UB_{\tau}(i)$ and $\LB_{\alpha}(i)$ satisfying~\eqn{ParamCond3} and~\eqn{ParamCond4}. We assume that $\Tau=\{I,II\}$ and $\Alpha=\{A,B\}$  and there will be three kinds of switchings in the phase: 
 IA (type I, class A), converting an element in $\state_{j+1}$ to an element in $\state_{j}$; IB (type I, class B), maintaining in the same \group; and IIB (type II, class B), converting from $\state_{j-1}$ to $\state_j$.  See Figure~\ref{f:chain} for an illustration of the possible transitions into $\C_j$.  
    \begin{figure}[htb]
\hbox{\centerline{\includegraphics[width=7cm]{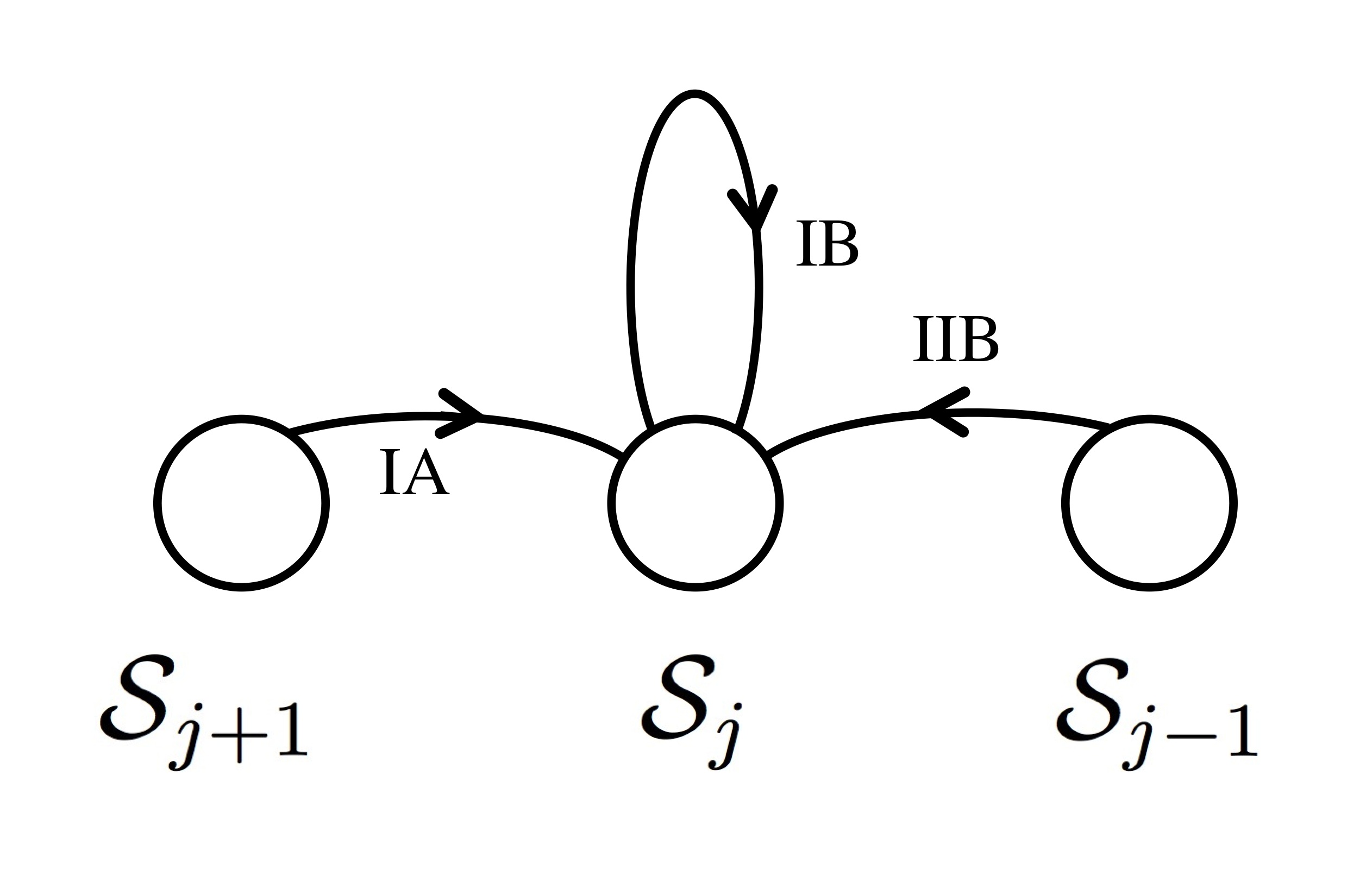}}}
\caption{\it  Transitions into $\C_j$}
\lab{f:chain}
\end{figure}
If $j$ is such that  $\C_{j-1}$ or $\C_{j+1}$  does not exist   in $\Phi_1$, the corresponding \group\ is omitted from the diagram. Additionally, there is no IIB switching  from $\state_0$ to $\state_1$,  and the Markov chain always transits from an element in $\state_0$ to $\output$ in the next step.

We have no need to define $\state_j$ here, nor the  switchings involved. What they are in the case of phase 3 of \NameA\ will be revealed in Section~\ref{sec:double}.

From Figure~\ref{f:chain}, 
we   see that~\eqn{equaliseclass} with $\alpha=$ B in the   two cases  $i=j$ and $i=j-1$ implies that the expected number of times that  a given Class B switching produces $P \in\state_j$ (including the time it is    b-rejected, in part (iii) of the switching step, if that occurs) is
 \bel{equaliseclass1}
 q_{B}(j) =\frac{\sigma(j)\rho_I(j)}{\UB_I(j)} = \frac{\sigma(j-1)\rho_{II}(j-1)}{\UB_{II}(j-1)}\qquad (1\le j \le \imax),
\ee
and at times we will use either of these quantities in place of $q_{B}(j)$.
For similar reasons,   
\bel{qA}
q_A(j) = 
 \sigma(j+1)\rho_I(j+1)/\UB_I(j+1)  \quad  ( 0\le j\le \imax-1).
 \ee

It is convenient to set
\bel{sigma-x}
x_j=\sigma(j)|\Phi_1|,
\ee 
whence, recalling that ${\vec \pi_0}(j)=|\Phi_1|^{-1}$, substitution of~\eqn{equaliseclass1} and~\eqn{qA} into~\eqn{sigmasGen} gives
\bel{recx}
x_j=1+\frac{x_{j+1}\rho_I(j+1)}{\UB_I(j+1)}\cdot \LB_{A}(j)+\frac{x_j\rho_I(j)}{\LB_I(j)}\cdot  \LB_{B}(j) \quad \mbox{($1\le j\le \imax-1$)}.
\ee
We need separate equations for $j\in\{0,\imax\}$ as~\eqn{equaliseclass1} does not hold for $j=0$ and~\eqn{qA} does not hold for $j=\imax$. No $P\in\state_0$ can be reached by any class B switching, because whenever any element in $\state_0$ is reached, the Markov chain proceeds to state $\output$ in the very next step. Therefore, $q_B(0)=0$.
Similarly, no element $P\in \state_{\imax}$ can be reached via a type A switching because $\Phi_1$ does not contain $\state_{\imax+1}$. Therefore, $q_A(\imax)=0$. So  for $j=\imax$ or $0$ we have in place of~\eqn{recx}
provided $i_1>0$
\bel{boundary}
x_{\imax} = 1+\frac{x_{\imax}\rho_I(\imax)}{\UB_I(\imax)}\cdot\LB_{B}(\imax), \qquad
x_0 = 1+\frac{x_1\rho_I(1)}{\UB_I(1)}\cdot\LB_{A}(0).
\ee
 Moreover, the second equality in~\eqn{equaliseclass1} implies 
 \bel{recrho}
 \rho_{II}(j)=\rho_I(j+1)\frac{x_{j+1}}{x_j}\cdot \frac{\UB_{II}(j)}{\UB_{I}(j+1)} \quad \mbox{for all $0\le j\le\imax-1$.}
 \ee
As boundary cases, we require 
\bel{initial}
\rho_I(0)=\rho_{II}(0)=0, \quad \rho_{II}(\imax)=0,
\ee 
where the first two equalities are required because  every element in $\state_0$ is forced to transit to $\output$ once it is reached, and the last  because $\Phi_1$ does not contain $\state_{\imax+1}$. 
 Equations~\eqn{recx}--\eqn{initial} determine a system that   will be required to have a solution. In general, the system will be underdetermined, and this freedom can be used for the convenience of specifying some values of the $\rho_\tau$ variables, to have values suitable for our purpose (in which we aim to keep the probability of rejection to a minimum), and then solving for the remaining variables. 
However, we will also require that    
\bel{atmost1}
\rho_{I}(i)+\rho_{II}(i)\le 1,\quad  \rho_I(i),\ \rho_{II}(i)\ge 0\quad \mbox{for all}\ 0\le i\le \imax.
 \ee 
 This condition ensures that $\rho_{\tau}(i)$ satisfy~\eqn{ParamCond1} and~\eqn{ParamCond2}, as required if they are to be used as probabilities. Naturally, it needs to be checked in any particular case that a solution of the desired type exists.

For any solution   $(\rho^*_{\tau}(j),x^*_j)$ of the system~\eqn{recx}--\eqn{atmost1},  we may set $\rho_{\tau}(j)$ equal to $\rho^*_{\tau}(j)$ for every $0\le j\le \imax$ and   each $\tau\in  \{I,II\} $. Then  $(\sigma(j), q_{\alpha}(j))_{0\le j\le \imax}$ for $\alpha\in\{A,B\}$ can be computed using $x^*_j$ and~\eqn{sigma-x},~\eqn{equaliseclass1} and~\eqn{qA}. Thus, $(\sigma(j), q_{\alpha}(j))_{0\le j\le \imax}$ is a solution to equations~\eqn{equaliseclass} and~\eqn{sigmasGen}, and so by  Lemma~\ref{l:equalised}, the expected number of visits to any given  element  in any \group\ $\state_i$ is $\sigma(i)$.  Recall that the phase finishes as soon as an element of  $\state_0$ is reached. Hence,  every element  in $\state_0$ is reached at most once, and the probability that a given $P\in\state_0$ is reached is equal to $\sigma(0)$,  the same  for all $P\in\state_0$. Thus, we have proved  the following.
   
   \begin{lemma}\lab{lem:uniform} Assume that $(\rho^*_{\tau}(i), x^*_i)$ is a solution to system~\eqn{recx}--\eqn{atmost1}. Set $\rho_{\tau}(i)$ to be $\rho^*_{\tau}(i)$ for every $1\le i\le \imax$ as the type probabilities in the phase. Assume that no rejection occurs during the algorithm. Then  the last  element  visited in the phase  is  distributed uniformly at random in $\state_0$. 
\end{lemma}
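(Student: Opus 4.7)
The plan is to reduce everything to Lemma~\ref{l:equalised}. Given a solution $(\rho^*_\tau(i), x^*_i)$ to the system~\eqn{recx}--\eqn{atmost1}, I would set $\rho_\tau(i)=\rho^*_\tau(i)$ as the type probabilities (so~\eqn{atmost1} guarantees that~\eqn{ParamCond1}--\eqn{ParamCond2} hold), define $\sigma(j)=x^*_j/|\Phi_1|$ as per~\eqn{sigma-x}, and define $q_\alpha(j)$ for $\alpha\in\{A,B\}$ by the formulas~\eqn{equaliseclass1} and~\eqn{qA}. With these definitions, the two expressions in~\eqn{equaliseclass1} coincide precisely because $\rho^*_\tau$ satisfies the recurrence~\eqn{recrho}, which is the verification that~\eqn{equaliseclass} holds for $\alpha=B$; the definition~\eqn{qA} gives~\eqn{equaliseclass} for $\alpha=A$ by inspection. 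The boundary conditions~\eqn{initial} encode that $q_A(\imax)=0$ and $q_B(0)=0$, which is consistent with no class~$A$ switching reaching $\state_{\imax}$ and no class~$B$ switching reaching $\state_0$.

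Next I would check that the constructed $\sigma(j)$ satisfies~\eqn{sigmasGen}. For $0<j<\imax$ this is exactly the recurrence~\eqn{recx} after substituting in the values of $q_A(j)$ and $q_B(j)$ from~\eqn{equaliseclass1}--\eqn{qA} and using $\sigma(j)=x^*_j/|\Phi_1|$ together with ${\vec \pi_0}(j)=|\Phi_1|^{-1}$; at the two boundary indices $j=0$ and $j=\imax$, it matches~\eqn{boundary}. Thus $(\sigma(i),q_\alpha(i))_{0\le i\le\imax}$ is a simultaneous solution to~\eqn{equaliseclass} and~\eqn{sigmasGen}. Lemma~\ref{l:equalised} then asserts that the expected number of visits to each $P\in\state_i$ is exactly $\sigma(i)$, independent of which $P\in\state_i$ is chosen.

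Finally, to extract uniformity on $\state_0$, I would use the structure of the Markov chain $\markov$: every element of $\state_0$ transits to $\output$ in the next step, so it can be visited at most once. Hence for $P\in\state_0$, $\sigma(P)$ equals the probability that $P$ is ever visited, and this common value is $\sigma(0)$. The event ``no rejection'' coincides with the event that some element of $\state_0$ is reached (this is the only alternative to landing in $\reject$). The probability of no rejection is therefore $|\state_0|\,\sigma(0)$, and the probability that a particular $P\in\state_0$ is the output, conditional on no rejection, is $\sigma(0)/(|\state_0|\,\sigma(0))=1/|\state_0|$. I do not foresee a serious obstacle here: the bookkeeping needed to verify~\eqn{equaliseclass} and~\eqn{sigmasGen} is mechanical, and the transient-state hypothesis (A1) is automatic for the chain as specified because all rejection and output paths absorb. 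The only mild care is to ensure the boundary indices $j\in\{0,\imax\}$ are handled separately, since~\eqn{equaliseclass1} and~\eqn{qA} degenerate there.
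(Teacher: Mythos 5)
Your proposal is correct and follows essentially the same route as the paper: set $\rho_\tau=\rho^*_\tau$, read off $\sigma(j)=x^*_j/|\Phi_1|$ and $q_\alpha(j)$ from~\eqn{sigma-x},~\eqn{equaliseclass1} and~\eqn{qA}, check that~\eqn{equaliseclass} and~\eqn{sigmasGen} reduce to the system~\eqn{recx}--\eqn{atmost1}, invoke Lemma~\ref{l:equalised}, and finish by noting each element of $\state_0$ is visited at most once so $\sigma(0)$ is its reaching probability. Your explicit final conditioning step (dividing by $|\state_0|\,\sigma(0)$) is a slightly fuller write-up of what the paper leaves implicit, but the argument is the same.
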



\section{Phase 3: double edge reduction}
\lab{sec:double}

 We now turn to giving the explicit construction of the algorithm  \NameA, and its analysis. We will analyse the three phases of the algorithm, essentially applying the general framework in Section~\ref{s:phase} to each phase of \NameA. We  begin with Phase 3, which is the most interesting phase and has been partially described  in Section~\ref{sec:rho}. It was this phase of double-edge reduction that determined the range of $d$ for which {\em DEG} runs efficiently in~\cite{MWgen}. To improve the range of $d$ in~\cite{MWgen} we need to allow more flexible Markov chains, which motivates the new approach in Section~\ref{s:phase}. For phase 3 of \NameA, there will be two types (I and II) of switchings involved, categorized into two classes A and B. The transitions between $(\state_i)_{0\le i\le \imax}$ caused by performing these switchings are exactly as described in Section~\ref{sec:rho}, which lead to system~\eqn{recx}--\eqn{atmost1}.  To complete the specification of phase 3,  we will specify $\Phi_1$, $\state_i$ and $\imax$, and  the set of switchings for this phase. We will analyse these switchings to obtain appropriate parameters $\UB_{\tau}(i)$ and $\LB_{\alpha}(i)$ as required for the system~\eqn{recx}--\eqn{atmost1}. After that,  we will specify values for the variables $ \rho_{I}(i)$ and show that, given these, the whole system has a solution.  Then, in Section~\ref{sec:rejections}, we bound the probability that phase~3, once begun, terminates in a rejection.

\subsection{\ Specifying $\Phi_1$, $\state_i$ and $\imax$}
 
   As described before, if no initial rejection happens, then the algorithm enters phases that sequentially reduces the numbers of loops, triple edges and double edges to zero. Uniformity of the distribution of the output of each phase is guaranteed.  Let $\gamma>0$ be a pre-specified constant.  With the foresight of the definition of $\acceptable$ in Section~\ref{sec:A}, and Lemma~\ref{l:initial}, define 
   \bel{BD}
   \imax=\left\lfloor(1+\gamma)\frac{(d-1)^2}{4}\right\rfloor.
   \ee
 For the analysis here, we may assume that the algorithm enters phase 3 with a pairing $P_0$ uniformly distributed in $\Phi_1$ defined by
   \[
\Phi_1=   \{P\in \Phi:\ \ \mbox{$P$ contains at most $\imax$ double edges, and no other multi-edges or loops} \}.
   \]
       We will verify this assumption in Lemma~\ref{lem:tunif} in Section~\ref{sec:loop-triple} below. Define $\state_i$ to be the set of pairings in $\Phi_1$ containing exactly $i$ double edges. So $\Phi_1=\cup_{0\le i\le \imax}\state_i$.
 
\subsection{Definition of the switchings}
\lab{sec:switchings}

As indicated in Section~\ref{sec:rho}, there are two types of switchings, I and II,  in phase~3. 
The two types are different versions of the same basic switching operation, which we call a {\em d-switching},  defined as follows to act upon a pairing $P$ that contains at least one double edge. 
As in Figure~\ref{f:doubleI}, pick a double edge with  parallel pairs $\{1,2\}$ and $\{3,4\}$, with points 1 and 3   in the same vertex $u_1$ and points 2 and 4 in the same vertex  $v_2$. Also pick another two pairs $\{5,6\}$ and $\{7,8\}$, that represent single  edges  (recalling that we have assumed $\Phi$ contains no pairings with loops), and let $u_2$, $v_2$, $u_3$ and $v_3$ denote the vertices containing the points 5, 6, 7 and 8 respectively.
If all six vertices $u_i$, $v_i$, $1\le i\le 3$, are distinct, then a d-switching replaces the pairs \{1,2\}, \{3,4\}, \{5,6\} and \{7,8\} by \{1,5\}, \{3,7\}, \{2,6\} and \{4,8\}, producing the situation in the right side of Figure~\ref{f:doubleI}.

We emphasise that the specification of the labels in the above and later switching definitions is significant, in that when we come to count the ways that a switching can be applied to a pairing, each distinct valid way of assigning the labels $1, 2, \dots$ and $u_1, v_2,u_2\ldots$ induces   a different switching.

\begin{figure}[htb]

 \hbox{\centerline{\includegraphics[width=9cm]{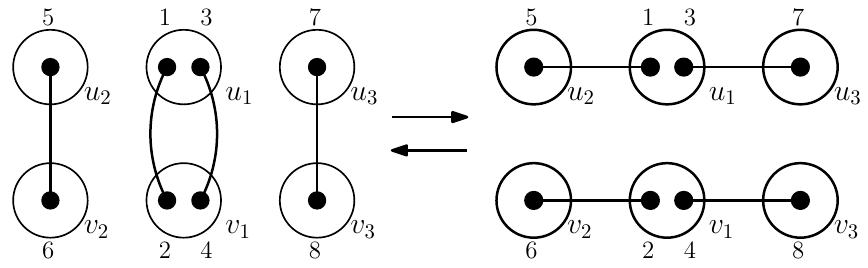}}}
 \caption{\it  A type I switching}

\lab{f:doubleI}

\end{figure}

\ss

\no {\em Type I switching}. If the d-switching operation does not create more than one new double edge, it is a valid type I switching. The {\em class} of a type I switching  depends on how many   new double edges it creates. If none, it is in class A, whilst if there is one, then it is in class B (as in Figure~\ref{f:doubleIb}).  Note that the class of a switching is not chosen; when a random type~I switching is chosen in part (ii) of a switching step, the class is determined after the switching is chosen. For a type I class B switching, there was an existing pair between $u_1$ and $u_i$ or between $v_1$ and $v_i$ ($i=2,3$) before the d-switching was applied. For purposes of counting type I class B switchings, the  existing pair is then labelled $\{9,10\}$, where 9 is in $u_1$ (or $v_1$ as the case may be). Of course, this pair remains after the switching is applied.  See Figure~\ref{f:doubleIb} for an example.\ss

 \begin{figure}[htb]

 \hbox{\centerline{\includegraphics[width=9cm]{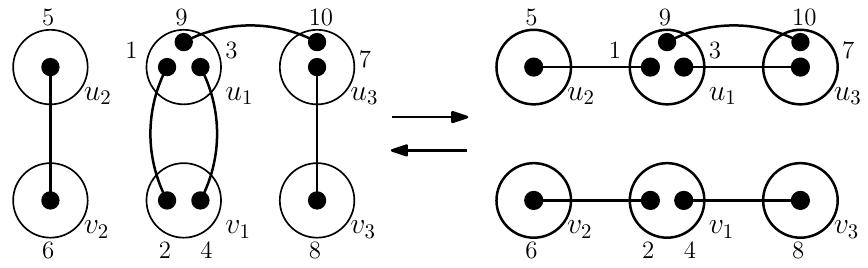}}}
 \caption{\it A type I switching of class B}

\lab{f:doubleIb}

\end{figure}

\no {\em Type II switching}.   If exactly two new double edges, both incident with $u_1$ or both incident with $v_1$, are created by performing a d-switching, then it is a valid type II switching. For  counting purposes, the existing pairs that become part of double edges are labelled $\{9 ,10\}$ and $\{11 ,12\}$, with 9 and 12 both in $u_1$  or both in $v_1$. (See Figure~\ref{f:doubleII1}.) A type II switching is always in class B. Note that the choice of placing point 10  in $u_2$ or in $u_3$, say, leads to different  type II switchings arising from the same d-switching. See Figures~\ref{f:doubleII1} and~\ref{f:doubleII2} for an example. 
   
\begin{figure}[htb]

 \hbox{\centerline{\includegraphics[width=9cm]{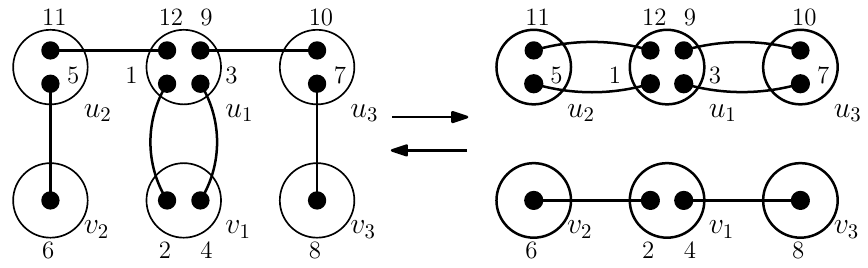}}}
 \caption{\it  A type II switching}

\lab{f:doubleII1}

\end{figure}

\begin{figure}[htb]

 \hbox{\centerline{\includegraphics[width=9cm]{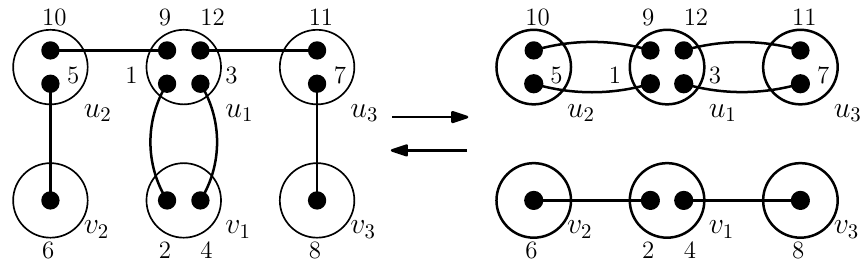}}}
 \caption{\it A different type II switching}

\lab{f:doubleII2}

\end{figure}

We close this subsection by elaborating on the need for the various types and classes of switchings.
The type I class A switching   is the same switching used in~\cite{MWenum} to decrease the number of double edges by one. There, choices of the d-switching are not allowed if $u$ is adjacent to $u_1$ or $u_2$ or if $v$ is adjacent to $v_1$ or $v_2$. This makes it hard to bound the f-rejection probability away from 1 unless $d=O(n^{1/3})$. In order to permit $d$ to be larger, we permit the d-switching if at most one new double edge is created.  These are the type I  class B switchings.  However, permitting these switchings causes a problem.  Each one creates a  2-path containing exactly one double edge. Among all pairings in a \group\ $\state_i$, the number of such 2-paths can vary immensely,  which could lead to a large b-rejection probability.    For instance, consider the extreme case where $d$ is even, $i=(d/2)(d/2+1)$ and $P\in\state_i$ is a pairing such that all the $i$ double edges in $P$ are contained in an isolated clique of order $d/2+1$. Then the number of ways to reach $P$ via a type I, class B switching is 0. If  there were no class B switchings of any other types, this would force $\LB_\alpha(i)$ to equal 0, and hence in part (iii) of the general switching step, all class B switchings would be given b-rejection, rendering them useless.  In order to repair this situation, we  introduce the type II switching (of class B), which boosts the probability of such pairings being reached.

\subsection{Specifying $\UB_{\tau}(i)$ and $\LB_{\alpha}(i)$}
\lab{sec:UBLB}
 For any positive integer $k$, we define $M_k=  n[d]_k$, where $[x]_k$ denotes the  falling factorial, $x(x-1)\cdots (x-k+1)$.  
  Define
\bea
\UB_I(i)&=&4i(M_1-4i)^2  \lab{UBI}\\
\UB_{II}(i)&=&16i(d-1)^2(d-2)(d-3) \lab{UBII}\\
 \LB_A(i)&=& M_2^2- M_2(d-1)(16i + 3d^2+ d+6)   \lab{LBAdouble}\\ 
\LB_B(i)&=& 16i(d-2)M_2-16id(8id+9d^2+3d^3).\lab{LBBdouble}
\eea
\smallskip

The next lemma shows that these  provide the   required bounds on $f_{\tau}(P)$ and $b_{\alpha}(i)$, as given by~\eqn{ParamCond3} and~\eqn{ParamCond4}.
\begin{lemma}\lab{l:mbounds}
In phase 3, for every $P\in \state_i$, 
\begin{enumerate}
\item[(i)] $f_{\tau}(P)\le \UB_{\tau}(i)$ for every $1\le i\le \imax$ and  $\tau\in\{I,II\}$;
\item[(ii)] $b_{\alpha}(i)\ge \LB_{\alpha}(i)$ for every $0\le i\le \imax$ and $\alpha\in\{A,B\}$ such that $(\alpha,i)\neq (A,\imax)$.
\end{enumerate}
\end{lemma}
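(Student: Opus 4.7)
My plan is to establish each of the four inequalities by direct enumeration of labeled switchings on $P\in\state_i$. The upper bounds in (i) follow from crude product counts; the lower bounds in (ii) require computing a main term from an unrestricted count and then subtracting upper bounds on the ``bad'' labelings that violate some validity condition in the definitions of Section~\ref{sec:switchings}.

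For the upper bounds, I would first label the double edge being switched: $i$ choices for which of the $i$ double edges, $2$ for which endpoint is $u_1$, and $2$ for which of its two points in $u_1$ receives label $1$, giving $4i$ labelings. For type I, the pairs $\{5,6\}$ and $\{7,8\}$ must be single edges of $P$; since $4i$ points lie in double edges, there are $M_1-4i$ points available in single edges, contributing at most $M_1-4i$ ordered choices for each pair. This gives $\UB_I(i)=4i(M_1-4i)^2$. For type II in the ``at $u_1$'' case, I additionally label the two existing pairs $\{9,10\}$ and $\{11,12\}$: point $9\in u_1\setminus\{1,3\}$ has $d-2$ choices, point $12\in u_1\setminus\{1,3,9\}$ has $d-3$ choices, then $5\in u_2\setminus\{10\}$ and $7\in u_3\setminus\{11\}$ each have at most $d-1$ choices, for $4i(d-1)^2(d-2)(d-3)$ labelings; the ``at $v_1$'' case is symmetric, so in total we are bounded by $\UB_{II}(i)$ (with room to spare).

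For $b_A(P)$, a reverse class A switching is determined by an ordered pair $(1,3)$ in some vertex $u_1$ and $(2,4)$ in some $v_1$, with $5,6,7,8$ forced as the partners of $1,2,3,4$ in $P$. The unrestricted count is $\sum_{u_1}d(d-1)\cdot\sum_{v_1}d(d-1)=M_2^2$. I then subtract upper bounds on labelings that violate a validity condition: (a) a coincidence $u_1=v_1$ or among $\{u_2,u_3,v_2,v_3\}$, bounded using standard per-vertex degree counts and feeding the $(3d^2+d+6)$ term; (b) $u_1v_1$ already being an edge of $P$, bounded by summing over the offending pair and the remaining labelings; and (c) one of $\{1,5\},\{3,7\},\{2,6\},\{4,8\}$ being part of a double edge of $P$, bounded by $16i$ times $O(dM_2)$ since each of the $i$ double edges can be the offender in $O(dM_2)$ ways (four cases for which pair is involved, times the remaining label choices). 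Summing fits inside $M_2(d-1)(16i+3d^2+d+6)$, giving $\LB_A(i)$. For $b_B(P)$, since class B contains both reverse type IB and type IIB, it suffices to count reverse IB alone. Pick a double edge in $P$ (the ``created'' one) with $4i$ labelings, then choose which of the four new pairs $\{1,5\},\{3,7\},\{2,6\},\{4,8\}$ is the one whose endpoint pair lies in the pre-existing double partner (factor $4$), giving $16i$ labelings for the created double edge; then pick $3\in u_1\setminus\{1,9\}$ ($d-2$ choices) and the ordered pair $(2,4)$ in some $v_1$ ($M_2$ choices), for unrestricted count $16i(d-2)M_2$. Subtracting analogous bad counts (vertex coincidences, existing edges at the other vertex pairs, and involvement of further double edges) fits within $16id(8id+9d^2+3d^3)$, giving $\LB_B(i)$.

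The main obstacle is the combinatorial bookkeeping for the lower bounds: each bad event has several subcases indexed by which vertices coincide or which pair becomes a multi-edge, and one must verify that the sum of their upper bounds fits precisely within the stated correction terms. This is tedious but mechanical once the list of bad events is enumerated. The upper bounds in (i) require only care with the labeling conventions, particularly that swapping the roles of $(\{5,6\},\{9,10\})$ and $(\{7,8\},\{11,12\})$ in a type II switching produces distinct labelings.
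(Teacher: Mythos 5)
Your overall strategy coincides with the paper's: crude product counts for the upper bounds in (i), and for (ii) an unrestricted count minus upper bounds on the labelled configurations that violate some validity condition. The upper bounds for $f_I$ and $f_{II}$ and the treatment of $b_B(P)$ match the paper's argument essentially step for step.

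There is, however, a genuine gap in your enumeration of the bad events for $b_A(P)$. A labelled pair of 2-paths in $P$ (centres $u_1$ and $v_1$, endpoints $u_2,u_3$ and $v_2,v_3$) is the image of a valid class A switching only if, in addition to the conditions you list, $u_2$ is \emph{not} adjacent to $v_2$ and $u_3$ is \emph{not} adjacent to $v_3$ in $P$: reversing the switching inserts the pair $\{5,6\}$ between $u_2$ and $v_2$ and the pair $\{7,8\}$ between $u_3$ and $v_3$, so if such an edge is already present the preimage would have $\{5,6\}$ (resp.\ $\{7,8\}$) contained in a double edge, contradicting the requirement in the definition of a d-switching that these two pairs represent single edges. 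Your list excludes only the case that $u_1v_1$ is an edge, so the quantity you compute is not a lower bound for $b_A(P)$. These omitted cases are also not absorbed elsewhere: each contributes up to about $M_2(d-1)^3$ choices, and together they supply the $2(d-1)^2$ portion of the $3d^2+d+6$ correction in $\LB_A(i)$, whereas the vertex coincidences you credit with ``feeding the $(3d^2+d+6)$ term'' contribute only $9M_2d(d-1)$, i.e.\ the linear-in-$d$ part. Once you add the exclusions $u_2\sim v_2$ and $u_3\sim v_3$ (bounded, as for $u_1v_1$, by $M_2$ choices for one 2-path times at most $(d-1)^3$ for the other so that the required edge is present), the three contributions sum to exactly $M_2(d-1)(16i+3d^2+d+6)$ and the stated bound follows.
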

\begin{proof}
Let $P\in\state_i$. Then $P$ has $i$ double edges. For an upper bound on $f_{\tau}(P)$, there are $4i$ ways to choose a double edge and label its points 1, 2, 3 and 4. The number of choices for each of $u_2v_2$ and $u_3v_3$ is at most $M_1-4i$, since their end points cannot be in any of the  $i$ double edges. Thus $f_I(P)\le\UB_I(i)$.  For the type II switching, we may assume by symmetry that $u_1$, rather than $v_1$, contains points 9 and 12, and that the point 10 lies in vertex $u_3$ rather than $u_2$. These give four choices. There are again $4i$   ways to choose 1, 2, 3 and 4. Then --- recalling that each vertex contains precisely $d$ points ---  there are at most  $(d-2)(d-3)$ ways to choose the two points 9 and 12 in $u_1$ as points 1 and 3 are excluded from the options, at most $d-1$ ways to choose point 5 in $u_2$ and at most $d-1$ ways to choose point 7 in $u_3$. Hence, $f_{II}(P)\le 4\cdot 4i(d-2)(d-3)(d-1)^2=\UB_{II}(i)$.  This shows (i).

For (ii), consider  $P\in \state_i$, where $0\le i\le \imax-1$.  Then $b_A(P)$ is the number of  class A switchings that produce $P$. To determine such a switching, we may choose two 2-paths, \{5,1\}, \{3,7\} and \{6,2\}, \{4,8\}, as in the right hand side of Figure~\ref{f:doubleI}. The number of these arising (with these labels) from the specification of   valid class A switchings is $M_2^2-X$, where $M_2$ is clearly the total number of 2-paths, and $X$ counts the choices such that at least one of the following occurs: 
\begin{enumerate}

  \item[(a)] either of the 2-paths has  a pair contained in  a double edge;
  \item[(b)] both of the 2-paths consist of only pairs representing single edges, and they share at least one vertex;
  \item[(c)] all six vertices $u_i,v_i$ are distinct, and there is an edge between $u_i$ and $v_i$ for some $i\in\{1,2,3\}$.
\end{enumerate}

We only need to bound $X$ from above. For (a), by symmetry we consider the case that $u_1u_3$ is a double edge (see the right side of Figure~\ref{f:doubleIb}) and we multiply the bound of the number of such choices by four. There are $4i$ ways to label point 3 contained in a double edge.  Then, there are $d-1$ ways to choose another point in $u_1$. That bounds the number of 2-paths containing a pair which is part of a double edge. The number of ways to choose the other 2-path is $M_2$. This gives the bound $4\cdot 4i(d-1)M_2=16i(d-1)M_2$ for (a).

For (b), all $u_i$ are distinct and so are all $v_j$. So, there are 9 choices for $i$ and $j$ for the common vertex $u_i=v_j$. Consider the case $u_2=v_2$. There are at most  $M_2$ ways to choose points 5, 1, 3 and 7. There are $d$ ways to choose point 6 (note that point 6 does not need to be distinct from point 5 as this is a bad case counted in $M_2^2$ as well) and then $d-1$ ways to choose point 4. This gives a bound $M_2d(d-1)$. It is easy to see that the same bound holds for each of the other eight cases. So the number of choices for (b) is at most $9M_2d(d-1)$. 

For (c), first consider the case that $u_2v_2$ is an edge. There are at most $M_2$ ways to choose points 5,1,3,7. Since $u_2v_2$ is an edge, there are at most $(d-1)^2$ ways to choose point 6 and then $d-1$ ways to choose point 4. This gives at most $M_2(d-1)^3$ choices for the case that $u_2v_2$ is an edge. By symmetry, the same bound holds for the case that $u_3$ is adjacent to $v_3$. For the case $u_1=v_1$, we have at most $(d-2)(d-1)$ ways to choose point 2 and then at most $d-2$ ways to choose point 4, resulting in a bound $M_2(d-1)(d-2)^2$. Hence, the number of choices for (c) is at most
$M_2(2(d-1)^3+(d-1)(d-2)^2)$.
 Adding the three cases, we get $X\le M_2(d-1)(16i + 3d^2+d+6)$, and so $b_A(P)\ge M_2^2- X\ge \LB_A(i)$.
  
  Finally, to bound $b_B(P)$ from below, we estimate the number of   class B switchings producing $P$. Note that these can be  of either type, I or II. Specifying such a switching involves labelling six vertices and 10 or 12 points, depending on the type of switching, as described in the right hand sides of   Figures~\ref{f:doubleIb} and~\ref{f:doubleII1}. By symmetry, we only discuss the case that point 9 is in $u_1$ and point 10 is in $u_3$, and multiply the resulting bound by 4. Choose a double edge $z$ and choose a point in $z$ and label it $3$ (in $4i$ ways). This determines the points 7, 9 and 10. Choose another point in $u_1$ and label it $1$ (in $d-2$ ways, as points 3 and 9 are already specified in $u_1$). The mate of $1$ is labelled $5$. If $\{1,5\}$ represents   a single edge, this corresponds to a type I switching. Otherwise, it is  in  a  double edge, and this corresponds to a  type II switching (with points 11 and 12 uniquely determined). In each case, another 2-path can be chosen in at most $M_2$ ways. In all, the number of ways to validly choose all these points is $4i(d-2) M_2-Y$ where $Y$ accounts for choices such that at least one of 
 \begin{enumerate}
  \item[(a')] the second 2-path also contains a double edge 
  \item[(b')] the two 2-paths share at least one vertex;
   \end{enumerate}  
  or (c) above holds.
Similar to the previous argument, the number of choices satisfying (a') is at most $2\cdot 4id\cdot  4id$, where factor 2 accounts for whether pair \{6,2\} or \{4,8\} is contained in a double edge. The number satisfying (b') is at most $ 9\cdot 4id\cdot d^2$, and  for (c) it is at most $3\cdot 4id \cdot d^3$.
    Hence, $Y\le 32i^2d^2+36id^3+12 id^4$ and so (recalling the factor 4 from the start)
  \[
  b_B(P)\ge 4\left(4i(d-2)M_2-Y\right)=\LB_B(i).\qed
 \]

\end{proof}

\subsection{Fixing $\rho_{\tau}(i)$}
\lab{sec:rho2}

We assume that $d\ge 2$ in the rest of the paper as the case $d=1$ is trivial: in that case a random pairing immediately gives a random simple 1-regular graph.

 With the definition of the three kinds of switchings (IA, IB and IIB) in Section~\ref{sec:switchings}, the transitions among \groups\ $\state_j$ in the Markov chain is exactly as in the example in Section~\ref{sec:rho}. 
Recalling the description in Section~\ref{sec:rho}, in order to define $\rho_{\tau}(i)$, it is sufficient to specify a solution $(\rho^*_{\tau}(i),x^*_i)$ of the system~\eqn{recx}--\eqn{atmost1}. We have some latitude here because the system is underdetermined. To keep the probability of  t-rejection small in every step, it is desirable to find a solution  such that $1-(\rho^*_I(i)+\rho^*_{II}(i))$ is as small as possible. 
We prove that the system  has a unique solution such that $\rho^*_I(i)=1-\eps$ for all $1\le i\le \imax$, for some $0<\eps<1$, at least for $n$ sufficiently large. 
\begin{lemma}\lab{lem:solution} 
Let $0<\gamma <1$ be fixed, which determines $i_1$ via~\eqn{BD}, and   $\eps>0$.  Assume that  each of the following holds: \bea
\frac{4(1+\gamma)(d-1)^2(d-2)(d-3)}{dn\big(dn-7d^2+7d-10 - 4  \gamma(d-1)^2 \big)}&\le& \eps  <1 ;\lab{nLB1}\\
dn-7d^2+7d-10 - 4  \gamma(d-1)^2 &>&0;\lab{nLB2}\\
(d-2)(d-1)n-(2(1+\gamma)d(d-1)^2+9d^2+3d^3)&>&0.\lab{nLB3}
\eea
Then,  with  the values of $\UB_\tau(i)$  and $\LB_\alpha(i)$ defined in Section~\ref{sec:UBLB}, the system~\eqn{recx}--\eqn{atmost1} has a unique solution $(\rho^*_{\tau}(i),x^*_i)$ satisfying $\rho^*_I(i)=1-\eps>0$ for every $1\le i\le \imax$. For fixed $\gamma>0$, and $n$ sufficiently large, there is an $\eps$ satisfying~\eqn{nLB1} with $\eps=O(d^2/n^2)$.
\end{lemma}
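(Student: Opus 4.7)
The plan is to fix $\rho^*_I(i)=1-\eps$ for $1\le i\le\imax$ (together with $\rho^*_I(0)=0$ from~\eqn{initial}) and reduce the system~\eqn{recx}--\eqn{atmost1} to a backward recursion in the $x^*_j$. Writing $a_j=(1-\eps)\LB_A(j)/\UB_I(j+1)$ and $b_j=(1-\eps)\LB_B(j)/\UB_I(j)$, equations~\eqn{recx}--\eqn{boundary} become
\[
x^*_{\imax}=\frac{1}{1-b_{\imax}},\qquad x^*_j=\frac{1+a_j x^*_{j+1}}{1-b_j}\ \ (1\le j<\imax),\qquad x^*_0=1+a_0 x^*_1.
\]
Solving in decreasing order of $j$ gives a unique sequence of $x^*_j$ provided each denominator $1-b_j$ is nonzero. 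The $\rho^*_{II}(j)$ for $0\le j\le\imax-1$ are then read off from~\eqn{recrho}, while $\rho^*_{II}(\imax)=0$ is imposed by~\eqn{initial}; the requirement $\rho^*_{II}(0)=0$ from~\eqn{initial} is automatic because $\UB_{II}(0)=0$ by~\eqn{UBII}. Hence existence and uniqueness reduce to verifying (i) $1-b_j>0$ for each $j$, while the only nontrivial part of~\eqn{atmost1} is (ii) $\rho^*_{II}(j)\le\eps$ for each $j$.

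For (i), I would use the explicit formulas of Section~\ref{sec:UBLB} with $M_1=dn$, $M_2=dn(d-1)$ and $j\le\imax\le(1+\gamma)(d-1)^2/4$ to bound $\LB_B(j)/\UB_I(j)=O(d^2/n)$, which is bounded away from $1$ under~\eqn{nLB2}. The central observation for (ii) is the following monotonicity bound: since
\[
\frac{x^*_{j+1}}{x^*_j}=\frac{x^*_{j+1}(1-b_j)}{1+a_j x^*_{j+1}}
\]
is a strictly increasing function of $x^*_{j+1}$ with limit $(1-b_j)/a_j$, it follows that $x^*_{j+1}/x^*_j\le 1/a_j=\UB_I(j+1)/((1-\eps)\LB_A(j))$. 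Substituting into~\eqn{recrho} makes the factors $\UB_I(j+1)$ cancel, leaving the clean bound
\[
\rho^*_{II}(j)\le\frac{\UB_{II}(j)}{\LB_A(j)}.
\]

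The remaining step is a direct algebraic estimate. Factoring $\LB_A(j)=dn(d-1)^2\bigl(dn-16j-3d^2-d-6\bigr)$ from~\eqn{LBAdouble} and using~\eqn{UBII}, the $(d-1)^2$ cancels; bounding $j$ from above by $\imax$ in the numerator and using $16\imax+3d^2+d+6\le 7d^2-7d+10+4\gamma(d-1)^2$ in the denominator then yields exactly the right hand side of~\eqn{nLB1}. Thus~\eqn{nLB1} is precisely the condition for $\rho^*_{II}(j)\le\eps$, and~\eqn{nLB2} is exactly what is needed to keep $\LB_A(j)>0$. The only real obstacle is careful bookkeeping of lower-order terms in this algebra; once the monotonicity inequality is in place, no new idea is required. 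Finally, for fixed $\gamma>0$ and $d=o(\sqrt n)$, the numerator in~\eqn{nLB1} is $\Theta(d^4)$ while the denominator is $dn\cdot\bigl(dn-O(d^2)\bigr)=\Theta(d^2 n^2)$, so setting $\eps$ to equal this lower bound gives $\eps=O(d^2/n^2)$, and both~\eqn{nLB1} and~\eqn{nLB2} hold for $n$ sufficiently large.
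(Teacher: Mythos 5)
Your proposal is correct and follows essentially the same route as the paper: fix $\rho^*_I(i)=1-\eps$, solve the backward recursion for $x^*_j$ starting from~\eqn{boundary}, read off $\rho^*_{II}$ from~\eqn{recrho}, and verify~\eqn{atmost1} via the bound $x^*_{j+1}/x^*_j\le \UB_I(j+1)/((1-\eps)\LB_A(j))$, which yields $\rho^*_{II}(j)\le \UB_{II}(j)/\LB_A(j)\le\eps$ exactly as in the paper's~\eqn{xratio}--\eqn{mA} computation. The $a_j,b_j$ repackaging and the monotonicity framing are cosmetic differences only.
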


\ss
    
\begin{proof}
For $1\le i\le\imax$ we set $\rho^*_I(i)=1-\eps$, which is positive by~\eqn{nLB1}, and necessarily set $\rho^*_I(0)=0$ and $\rho^*_{II}(\imax)=0$ by~\eqn{initial}. Then~\eqn{boundary} determines $x^*_{\imax}$. Then for every $0\le i\le \imax-1$, the value of $x^*_{i}$ is determined recursively, in decreasing order of the value of $i$, by~\eqn{recx} and~\eqn{boundary}.  Equation \eqn{nLB3} ensures that $\LB_B(i)>0$. Moreover, $\LB_B(i)<\UB_{I}(i)$ for every $i$, which we will justify below. It follows then that $x^*_i>0$ for each $i$. From here, each $\rho^*_{II}(i)$ for $0\le i\le \imax-1$ is determined using~\eqn{recrho}, and is moreover strictly positive for  $1\le i\le \imax-1$ because   $x^*_i>0$ for   $0\le i\le\imax$. Note also that $\rho_{II}^*(0)=0$ because $\UB_{II}(0)=0$ by~\eqn{UBII} and this is consistent with~\eqn{initial}.

We have shown that all of $x^*_i$, $\rho^*_{\tau}(i)$ are uniquely determined. It only remains to show that $\LB_B(i)<\UB_{I}(i)$ and $(\rho^*_{\tau}(i),x^*_i)$ satisfies~\eqn{atmost1}. 
 First note that 
\bea 
\LB_A(i)\ge\LB_A(i_1)&\ge& M_2^2-  M_2(d-1)\big(4(1+\gamma)(d-1)^2 + 3d^2+ d+6\big)\nonumber\\
&=& d(d-1)^2n\big(dn-7d^2+7d-10 - 4  \gamma(d-1)^2 \big)\lab{mA}
\eea
which is positive by~\eqn{nLB2}.
 
 By~\eqn{recx},
\be\lab{xratio}
\frac{x^*_{i+1}}{x^*_i}\le \frac{\UB_I(i+1)}{\LB_A(i)(1-\eps)},
\ee
and hence by \eqn{recrho}, 
\[
\rho^*_{II}(i)=(1-\eps)\frac{x^*_{i+1}}{x^*_i}\cdot \frac{\UB_{II}(i)}{\UB_{I}(i+1)}\le  \frac{\UB_{II}(i)}{\LB_A(i)}.
\]
By~\eqn{UBII} and~\eqn{LBAdouble}, and using $i\le \imax\le(1+ \gamma) (d-1)^2/4$,
$$
\rho^*_{II}(i)\le  \frac{4(1+\gamma)(d-1)^4(d-2)(d-3)}{\LB_A(i)},
$$
which is at most $\eps$ by~\eqn{mA} and~\eqn{nLB1}. Hence $\rho^*_{I}(i)+\rho^*_{II}(i)\le 1$ as required for~\eqn{atmost1}. In the first paragraph of this proof it was observed that $\rho^*_{I}(i)$ and $\rho^*_{II}(i)$ are both nonnegative.

To prove $\LB_B(i)<\UB_{I}(i)$, noting that
$\LB_B(i)<16 id^4(n/d-3)$ and $\UB_I(i)>4i(dn-d^2)^2=4id^4(n/d-1)^2$, it is sufficient to prove that
$
4(n/d-3)<(n/d-1)^2.
$
This inequality is trivially true as $(n/d-1)^2-4(n/d-3)=(n/d-3)^2+4>0$.

The last claim in the lemma is immediate since $d=o(\sqrt n)$.
 \qed

\end{proof}
   Note that the proof of Lemma~\ref{lem:solution}   provides a computation scheme for a solution to system~\eqn{recx}--\eqn{recrho} with initial conditions~\eqn{initial}. It is solved by iterated substitutions in  $O(\imax)=O(d^2)$ steps.

 Let $ \gamma>0$ and $\eps$ be chosen to satisfy ~\eqn{nLB1} and~\eqn{nLB2}. Then  $\gamma$ will be used to define $\acceptable$ in Section~\ref{sec:A}. To complete the definition of phase 3 of \NameA, as 
   foreshadowed in Section~\ref{sec:rho}, we then   set $\rho_{\tau}(i)$  for phase 3 to be the solution determined in  Lemma~\ref{lem:solution}. Our main concern in this paper is for large $n$, but there is a technical issue here. If  no such  $ \gamma $ and $\eps$  satisfy~\eqn{nLB1} and~\eqn{nLB2},  \NameA\  would not be technically defined. This is circumvented by defining (in Section~\ref{sec:A})  $i_1=0$ in that case. Then  the system~\eqn{recx}--\eqn{atmost1} has the trivial solution, and phase 3 does, and needs to do, nothing. 
\subsection{Probability of  rejection}
\lab{sec:rejections}

Assume that $\gamma$ and $\eps$ are chosen to satisfy~\eqn{nLB1}--\eqn{nLB2}. 
 By Lemma~\ref{lem:solution},~\eqn{recx}--\eqn{atmost1} has a solution and this properly defines $\rho_{\tau}(i)$.

We are assuming that $\gamma$ is fixed,  and $\eps=O(d^2/n^2)$ has been chosen as described at the end of the previous subsection. Our next task is to bound the number of  switching  steps that the algorithm will perform in phase 3, in expectation, per graph generated.   The key points to consider are the distribution of the number of steps in the phase,  and the probability of rejection occurring during the phase.

\begin{lemma}\lab{lem:steps} 
The number of switching steps in phase 3 is $O(\imax)$, both in expectation for sufficiently large $n$, and with probability $1-O( n^{-2})$.
\end{lemma}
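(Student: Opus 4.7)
I propose to analyze $T$ by partitioning steps according to their type. Let $D$, $U$, $S$ count the successful type IA, IIB, IB switchings respectively, and let $R \in \{0,1\}$ be the rejection indicator; then $T = D + U + S + R + O(1)$. Because a successful type IA step reduces $\state(P_t)$ by~$1$, a type IIB step raises it by~$1$, and a type IB step leaves it fixed, while $\state(P_0) \le \imax$ and $\state(P_T) \ge 0$, we have $D \le \imax + U$ and hence $T \le \imax + 2U + S + O(1)$. Bounding $\ex[T]$ thus reduces to bounding $\ex[U]$ and $\ex[S]$.

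Conditional on $P_{t-1} = P \in \state_i$, the probability of a type IIB success is at most $\rho_{II}(i) \le \eps = O(d^2/n^2)$ by Lemma~\ref{lem:solution}, and the probability of a type IB success is at most $\rho_I(i)\,f_{IB}(P)/\UB_I(i)$. A short counting argument---a class-B d-switching must create its new double edge in one of four positions, each requiring an existing pair between two of the six vertices $u_i, v_i$, so the generic $M_2$-factor for the second 2-path becomes an $O(dM_1)$-factor---gives $f_{IB}(P) \le 16i(d-1)^2 M_1$. Combined with $\UB_I(i) = \Theta(iM_1^2)$ for $d = o(\sqrt n)$, the per-step probability of type IB success is $O(d/n)$. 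Hence $\ex[U+S] = O(d/n)\,\ex[T]$, and substituting into $T \le \imax + 2U + S + O(1)$ yields $\ex[T](1 - O(d/n)) \le \imax + O(1)$, giving $\ex[T] = O(\imax)$.

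For the high-probability bound, I would apply Azuma--Hoeffding to the compensated process $M_t := \sum_{s=1}^{t} (Z_s - q_s)$, where $Z_s := \mathbf{1}[\text{step $s$ is IB or IIB success}]$ and $q_s := \ex[Z_s \mid \mathcal{F}_{s-1}] = O(d/n)$. The increments lie in $[-1,1]$, so Azuma gives $\pr(|M_t| > \lambda) \le 2e^{-\lambda^2/(2t)}$. Combining with the deterministic inequality $T \le \imax + 2(U+S) + O(1)$ yields $T \le O(\imax) + O(\sqrt{T \log n})$ with probability $1 - O(n^{-2})$, which solves to $T = O(\imax + \log n)$. When $\imax = \Omega(\log n)$ this is $O(\imax)$; for smaller $\imax$ (where $d$ is near the lower end of its range), each step of the chain terminates with probability bounded below by an absolute constant because the state space is uniformly bounded in this regime, so $T$ is stochastically dominated by a geometric random variable and $\pr(T > C \log n) = O(n^{-2})$ for $C$ large, the additive $\log n$ being harmless in the overall $O(nd^3)$ running-time budget of Section~\ref{sec:main}.

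The main obstacle is the uniform-in-$P$ counting bound $f_{IB}(P) = O(id^2 M_1)$, which must hold for every $P \in \state_i$ including pathological pairings with clustered double edges; once this is in place, the remainder is standard drift analysis and martingale concentration. A secondary subtlety is reconciling the stated $O(\imax)$ rate with the Azuma output $O(\imax + \log n)$ in the low-$d$ regime, which requires the geometric-domination fallback described above.
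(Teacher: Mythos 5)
Your skeleton matches the paper's: the deterministic drift inequality (each IA success lowers the stratum index by one, each IIB success raises it by one, so the number of non-decreasing steps must be at least about $(T-\imax)/2$), the per-step bound $\rho_{II}(i)\le\eps=O(d^2/n^2)$ for IIB, and the counting bound $f_{IB}(P)=O(id^2M_1)$ against $\UB_I(i)=\Theta(iM_1^2)$ giving $O(d/n)$ per step for IB, are all exactly the ingredients the paper uses. Your route to the expectation bound via the Wald-type identity $\ex[U+S]=O(d/n)\,\ex[T]$ and rearrangement is a legitimate (and arguably cleaner) alternative to the paper's tail-sum computation, though you should note that it needs a truncation $T\wedge N$ (or some a priori finiteness of $\ex[T]$) before you may rearrange.

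The genuine gap is in the high-probability claim. Azuma--Hoeffding with increments in $[-1,1]$ can never exploit the fact that the per-step ``bad'' probability $p=O(d/n)$ is tiny: it yields fluctuations of order $\sqrt{T\log n}$ and hence only $T=O(\imax+\log n)$, and your geometric-domination fallback likewise only delivers $T=O(\log n)$ in the small-$d$ regime. Neither is $O(\imax)$ when $\imax=o(\log n)$ (e.g.\ $d=3$, $\imax=1$), so the lemma as stated is not proved there; observing that the extra $\log n$ is ``harmless for the running time'' changes the statement rather than proving it. The paper instead bounds the probability that at least $k=(t-\imax)/2$ of the first $t$ steps are non-decreasing by the binomial tail $\binom{t}{k}\bigl(O(d/n)\bigr)^{k}\le 2^{t}\bigl(O(d/n)\bigr)^{k}$, and takes $t$ a suitable constant multiple of $\imax$ (plus an additive constant if $\imax$ is very small), so that the bound is $\bigl(O(d/n)\bigr)^{\Omega(\imax)+O(1)}=O(n^{-2})$ since $d/n=o(n^{-1/2})$. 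Replacing your Azuma step by this multiplicative Chernoff-type tail on the number of bad steps closes the gap and simultaneously disposes of the two-regime case split; everything else in your argument then goes through.
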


\begin{proof}
Let $t\ge \imax$ be an integer.
Assume that phase 3 contains at least $t$ switching steps. Among the first $t$ switching steps, let $x$ denote the number of times that the number of double edges does not decrease. Since it increases by at most 1 per step, and the number of double edges in $P_0$ is at most $\imax$, we have $i_1+x-(t-x)\ge 0$. It follows then that $x\ge (t-i_1)/2$. If a switching does not decrease the number of double edges, it must be either of type I and class B, or of type II. The probability of performing a type II switching for any given switching step in phase 3 (conditional upon the history of the process) is $\rho_{II}(i)$, which is at most $ \eps = O( d^2/n^2)$.     Assume that a type I switching is applied to a pairing $P\in\state_i$ for some $1\le i\le \imax$ in a switching step. It is easy to see that $f_{I}(P)=\Omega(i M_1^2)$   (see a more accurate lower bound for $f_I(P)$ in the proof of Lemma~\ref{lem:drejf} below), and similarly the number of type I class B switchings that can be applied to $P$ is $O(id^2M_1)$.  Thus,   conditional upon the chosen type of switching being I, the probability that the switching is in class B is $O( d^2 /  M_1) =O(d/n)$. Combining the above two cases, the probability that the number of double edges does not decrease in a given switching step is $O(d/n)$. Hence, the probability that $x\ge (t-i_1)/2$ is at most 
\[
\binom{t}{(t-i_1)/2} \big(O(d/n)\big)^{(t-i_1)/2}\le 2^t \big(O(d/n)\big)^{(t-i_1)/2}.
 \] 
 Putting $t=17\imax$, it is clear that the above probability is $O(n^{-2})$ (for all sufficiently large $d$  it is clear that choosing $t=3\imax$ suffices; choosing $t=17\imax$ allows to get the uniform probability bound for cases such as $d=2$). Hence, with probability $1-O(n^{-2})$, phase 3 contains at most $17\imax$ switching steps. The expected number of switching steps in phase 3 is at most  
 \[
2\imax+\sum_{t\ge 2\imax} t 2^t \big(O(d/n)\big)^{t/4} = 2\imax + O(1).\qed
\]

\end{proof}

 \begin{lemma}\lab{lem:drejt} 
 The probability that phase 3 terminates with a t-rejection is $O(d^4/n^2)$.
 \end{lemma}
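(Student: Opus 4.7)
\medskip

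\noindent\textbf{Proof plan.} The plan is to combine a per-step bound on the t-rejection probability with the bound on the total number of switching steps from Lemma~\ref{lem:steps}. By the specification of $\rho_\tau$ in Section~\ref{sec:rho2}, in phase~3 we have $\rho_I(i)=1-\eps$ for all $1\le i\le\imax$, so at any switching step with current state $P\in\state_i$ for $i\ge 1$, the conditional probability of a t-rejection is exactly $1-\rho_I(i)-\rho_{II}(i) = \eps - \rho_{II}(i) \le \eps$. From Lemma~\ref{lem:solution} we have $\eps = O(d^2/n^2)$ for $n$ sufficiently large. (For states in $\state_0$ there is no switching step, only a transition to $\output$, so no t-rejection occurs there; and in the degenerate case $\imax=0$ from Section~\ref{sec:rho2}, the statement is vacuous.)

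Next, I would convert the per-step bound into a bound on the probability that a t-rejection occurs at all during phase~3. Let $N$ denote the total number of switching steps performed in phase~3 (before any termination condition is met). Writing $T_t$ for the indicator that step $t$ is a t-rejection, the event ``some t-rejection occurs in phase~3'' has probability at most
\[
\sum_{t\ge 1} \Pr(T_t=1,\ N\ge t) \le \sum_{t\ge 1} \eps\cdot\Pr(N\ge t) = \eps\cdot\mathbb{E}[N],
\]
using that conditional on the history up to step $t$ (on the event $\{N\ge t\}$), the probability of t-rejection at step $t$ is at most $\eps$.

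Finally, Lemma~\ref{lem:steps} gives $\mathbb{E}[N] = O(\imax) = O(d^2)$, and combining with $\eps = O(d^2/n^2)$ yields a bound on the t-rejection probability of
\[
O(d^2)\cdot O(d^2/n^2) = O(d^4/n^2),
\]
as required. There is no serious obstacle here: the argument is essentially a Wald-type expectation bound, and both inputs (the per-step bound from the definition of $\rho_I$, and the bound on $\mathbb{E}[N]$) are already available. The only minor point to be careful about is that the per-step t-rejection probability must be bounded uniformly over the history of the process, which is immediate because $\rho_I(i)$ and $\rho_{II}(i)$ are fixed functions of $i$ alone, not of the trajectory.
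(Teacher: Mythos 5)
Your proof is correct and follows essentially the same route as the paper: a per-step t-rejection bound of $\eps=O(d^2/n^2)$ combined with the $O(\imax)=O(d^2)$ bound on the number of switching steps from Lemma~\ref{lem:steps}. The only cosmetic difference is that you invoke the expectation form of Lemma~\ref{lem:steps} via a Wald-type identity, whereas the paper uses the high-probability form and absorbs the exceptional event into an extra $O(n^{-2})$ term; both yield $O(d^4/n^2)$.
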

 \begin{proof}
   By our choice of $\rho_{\tau}(i)$, in each switching step of phase 3, a t-rejection occurs with probability at most $\max_{1\le i\le\imax}(1-\rho_I(i)-\rho_{II}(i))\le 1-\rho_I(i)=\eps= O(d^2/n^2)$. By Lemma~\ref{lem:steps}, the total number of switching steps in phase 3 is $O(\imax)$ with probability $1-O(n^{-2})$. This implies that the probability of a t-rejection during phase 3 is $O(n^{-2}+\imax d^2/n^2)=O(d^4/n^2)$.\qed
   \end{proof}\ss

 Before attempting  to bound the probability of an f-rejection or b-rejection in phase 3, we prove two technical lemmas about properties of a random pairing in $\state_i$.     Given a set of pairs $S$, let $V(S)$ denote the set of vertices that are incident with $S$. We say a set of pairs $K$ {\em excommunicate $S$} if all pairs in $K$ are single  edges, $V(K)\cap V(S)=\emptyset$, and no vertex in $V(K)$ is adjacent to a vertex in $V(S)$.

   \begin{lemma}\lab{lem:1edge} Let $u,v$ be two specified vertices.  Assume $P$ is a random pairing of $\state_i$ conditional on containing $uv$ as a single edge (or a double edge, or that $u$ is not adjacent to $v$).
Let $x$ be a randomly selected pair in $P$. The probability that one end vertex of $x$ is adjacent to $u$,  and the other adjacent to $v$, and all four vertices are distinct, is $O(d^2/n^2)$.
    \end{lemma}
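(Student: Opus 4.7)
The plan is to bound the expected number $N$ of \emph{bad} pairs in $P$---pairs $\{a,b\} \in P$ with $a$ in some vertex $u'$ adjacent to $u$, $b$ in some vertex $v'$ adjacent to $v$, and $u, v, u', v'$ all distinct---and to show that $\ex[N] = O(d^3/n)$. Since the probability in the lemma equals $\ex[N]/(M_1/2) = 2\ex[N]/(nd)$, this yields the claimed $O(d^2/n^2)$ bound. To estimate $\ex[N]$, I would parameterize each bad pair by a \emph{labelled configuration}: vertices $u', v' \in V \setminus \{u,v\}$ with $u' \neq v'$, together with labelled points $p_1 \in u$, $p_2 \in u'$, $p_3 \in v$, $p_4 \in v'$, $a \in u' \setminus \{p_2\}$, $b \in v' \setminus \{p_4\}$ such that $\{p_1,p_2\}$, $\{p_3,p_4\}$, $\{a,b\}$ are all pairs of $P$. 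Each bad pair corresponds to at most $O(m_{uu'} m_{vv'}) = O(1)$ labellings since multi-edges in $\state_i$ have multiplicity at most $2$, and the total number of possible labellings is $O(n^2 d^6)$.

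The key estimate is that, for each specific labelling, $\pr(\text{labelling present in }P) = O(1/(nd)^3)$, which I would prove via a switching argument. Let $\state_i^*$ denote the conditioned subset of $\state_i$ prescribed in the lemma. Given $P \in \state_i^*$ containing the labelled configuration, pick three additional pairs $\{c_j, d_j\}$ ($j=1,2,3$) of $P$, each a single edge with endpoints in ``generic'' vertices---pairwise distinct, disjoint from and non-adjacent to $\{u,v,u',v'\}$---so that the three simultaneous $2$-swaps
\[
\{p_1,p_2\},\{c_1,d_1\} \mapsto \{p_1,c_1\},\{p_2,d_1\};\ \{p_3,p_4\},\{c_2,d_2\} \mapsto \{p_3,c_2\},\{p_4,d_2\};\ \{a,b\},\{c_3,d_3\} \mapsto \{a,c_3\},\{b,d_3\}
\]
create no loops or new multi-edges and preserve both the double-edge count and the $uv$-adjacency. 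Only $O(d^2)$ pairs are forbidden at each step, so there are $\Omega(M_1^3) = \Omega((nd)^3)$ valid triples $(\{c_j,d_j\})_{j=1}^3$. Conversely, each resulting $P' \in \state_i^*$ arises from at most one such forward switching, since $c_1,d_1,c_2,d_2,c_3,d_3$ are determined as the mates in $P'$ of the labelled points $p_1,p_2,p_3,p_4,a,b$. Standard switching counting therefore gives the displayed probability bound, and summing over labellings yields $\ex[N] = O(n^2 d^6 / (nd)^3) = O(d^3/n)$.

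The main obstacle is a technicality in the switching when some configuration-edge among $uu', vv', u'v'$ is itself a double edge of $P$: the corresponding swap then drops the double-edge count by one, leaving $\state_i^*$. I would handle these exceptional labellings by a separate count---each of $u,v$ is incident to $O(d^2/n)$ double edges in expectation, so the aggregate contribution from labellings with a ``double'' configuration-edge is of lower order $O(d^4/n^2)$ and is readily absorbed---or alternatively by augmenting the switching (e.g.\ requiring $\{c_1,d_1\}$ to be chosen inside another double edge when $uu'$ is double, so the swap simultaneously destroys and creates one double edge) at the cost of slightly more intricate reverse counting. Either route gives the same asymptotic bound and completes the proof.
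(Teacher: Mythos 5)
Your main (generic-case) argument is sound and is essentially the paper's technique: a subsidiary switching that detaches the configuration onto ``generic'' pairs, with forward count $\Theta(M_1^k)$ and backward count $1$ because the auxiliary pairs are recoverable as the $P'$-mates of the labelled points. The paper organizes the count differently --- it conditions on the pair $x$ and on the $uv$ edge being present, and only rearranges the two adjacency-witnessing pairs plus two auxiliaries (so it never touches $x$), then multiplies by the $d^4$ choices of witnessing points --- but your global count over $O(n^2d^6)$ labellings with three $2$-swaps gives the same $O(d^2/n^2)$ and is equivalent bookkeeping.

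The gap is in your exceptional case. First, your ``separate count'' fix only covers the sub-cases where $uu'$ or $vv'$ is a double edge; it says nothing about the sub-case where the selected pair $x=\{a,b\}$ itself lies in a double edge $u'v'$, which is a problem \emph{you created} by swapping $x$ away (the paper never has this sub-case because it leaves $x$ untouched). The marginal bound $\pr(x\text{ lies in a double edge})=O(i/M_1)=O(d/n)$ is too weak on its own, and dropping the third swap loses a factor of $M_1$, giving a contribution of order $d^4$ to $\ex[N]$, which is far too large; one genuinely needs a joint estimate here, e.g.\ a switching that moves both pairs of the double edge $u'v'$ along $2$-paths (forward count $\Theta(M_2^2)$, backward count $O(i)$ since the re-created double edge of $P'$ is not determined by the labelled points), which is exactly the paper's second subsidiary switching. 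Second, your proposed alternative of choosing $\{c_1,d_1\}$ inside another double edge does not repair the count: removing one pair from each of the double edges $uu'$ and $w_1w_2$ and inserting two pairs between previously non-adjacent vertices destroys \emph{two} double edges and creates none, so the switching leaves $\state_i$. Both defects are repairable (condition on $x$ rather than swapping it, and use the $2$-path switching with backward count $O(i)$ for double adjacencies), but as written the exceptional case is not established.
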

    
   \begin{proof}
The proof is basically the same for all three cases, so it is enough to assume that $uv$ is a double edge.
   Label the pairs in the edge $uv$ as  \{1,2\} and \{3,4\}, with points 1 and 3 in $u$.
   We will use a ``subsidiary'' switching. To select a random pair  $x$, we may let it be the pair containing a randomly selected point.  Let the end points of $x$ be labelled $5$ and $6$, the vertex containing 5   labelled $u_1$ and the one containing 6   labelled $v_1$. We will bound the probability that $u_1$ is adjacent to $u$ and $v_1$ is adjacent to $v$ by $O(d^2/n^2)$. The lemma then follows by symmetry.  So far we have only exposed pairs in the edge $uv$ and pair $x$, and we can assume that all four vertices involved are distinct. All other points are paired uniformly at random in $P$ conditional on $P\in\state_i$.  

Pick a point 7 from $u_1$, a point 8 from $u$, a point 9 from $v_1$ and a point 10 from $v$. We next estimate the probability that $y=\{7,8\}$ and $z=\{9,10\}$ are pairs in $P$, and both represent single edges.

Let $C_1$ be the set of pairings that contain the pairs $\{2i-1,2i\}$ for all $1\le i\le 5$, such that $y$ and $z$ represent single edges.  For any $P\in C_1$, pick sequentially another two single-edge pairs labelled   \{11,12\} and \{13,14\} that each excommunicates  all previously chosen/exposed pairs.  Perform a switching of pairs as in Figure~\ref{f:cycle}.  Then the number of valid switchings is $\Theta(M_1^2)$, since  there are $M_1-O(d^2)= M_1-o(n) \sim M_1 $ options for each of these two single-edge pairs. Let $C$ be the set of pairings in $\C_i$ containing $\{u,v\}$ as a double edge, as well as the pair $x$. Observe that each subsidiary switching produces a pairing in $\C_i$ and indeed in $C$. For any pairing $P'\in C$,  at most one subsidiary switching can produce $P'$, because all points $1\le i\le 10$ are specified and so are points $11\le i\le 14$. Hence, the probability that a random $P\in C$ contains pairs $y$ and $z$ as single-edge pairs is
\[
\frac{|C_1|}{|C|}=O(1/M_1^2).
  \]  
There are at most $d^4$ ways to choose points $7\le i\le 10$. Thus, the probability that $u_1$ is adjacent to $u$ with a single edge and $v_1$ is adjacent to $v$ with a single edge is $O(d^4/M_1^2)=O(d^2/n^2)$.

\begin{figure}[htb]

 \hbox{\centerline{\includegraphics[width=10cm]{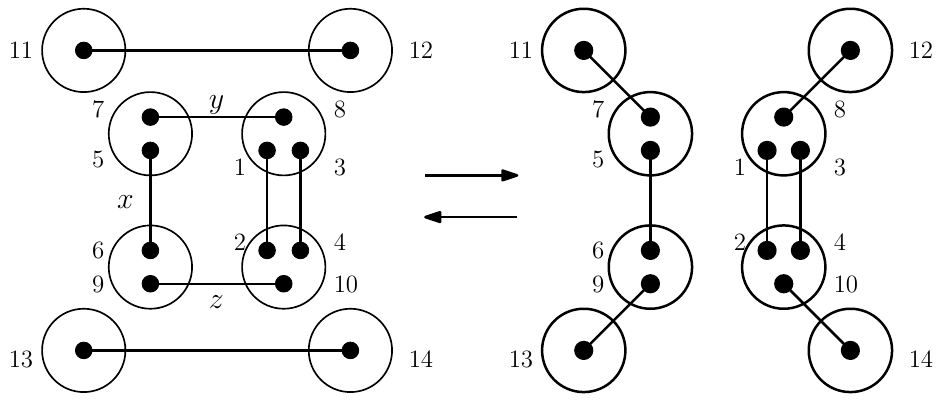}}}
 \caption{\it A subsidiary switching }

\lab{f:cycle}

\end{figure}

Next, we bound the probability that $u_1$ is adjacent to $u$ via a double edge. Pick points 7 and 9 from $u_1$ and 8 and 10 from $u$. We first bound the probability that $\{7,8\}$ and $\{9,10\}$ form a double edge $u_1u$.  Suppose the pairing $P$   does contain $\{7,8\}$ and $\{9,10\}$  (as on the left side of Figure~\ref{f:cycle2}).  Pick sequentially another two 2-paths that each excommunicates all previous chosen/exposed pairs. Switch some of the pairs as in Figure~\ref{f:cycle2}. In this case, the number of possible switchings is   $\Theta(M_2^2)$. On the other hand, the number of switchings that produce any given pairing in $C$ is $O(i)$. This is because all points $1\le i\le 10$ are specified and so are their mates, and there are  $i$ ways to choose the double edge to play the role of the one on the right side of Figure~\ref{f:cycle2} created by the switching. Hence, the probability that \{7,8\} and \{9,10\} are both pairs in $P$ is $O(i/M_2^2)$. There are at most $d^4$ ways to choose points 7, 8, 9, and 10. Hence, the probability that $u_1$ is adjacent to $u$ with a double edge is $O(id^4/M_2^2)=O(i/n^2)$. By symmetry, the probability that $v_1$ is adjacent to $v$ with a double edge is also $O(i/n^2)$.

Noting that $i\le \imax=O(d^2)$, this probability is $O(d^2/n^2)$. This completes the proof.\qed
                   
\begin{figure}[htb]

 \hbox{\centerline{\includegraphics[width=10cm]{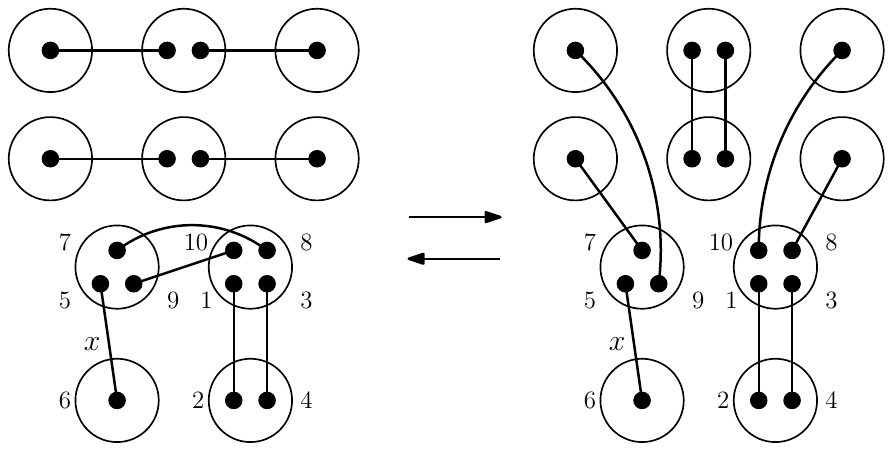}}}
 \caption{\it  Another subsidiary switching}

\lab{f:cycle2}

\end{figure}
                  
   \end{proof}

   \begin{lemma}\lab{lem:2double}
   Let $P$ be a random pairing in $\state_i$ with $i\ge 2$. The expected number of pairs of double edges sharing a common vertex is $O(i^2/n)$.
   \end{lemma}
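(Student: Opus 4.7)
The plan is to use linearity of expectation: for each ordered triple of distinct vertices $(u,v,w)$, let $A_{u,v,w}\subseteq\state_i$ be the set of pairings in which both $uv$ and $uw$ are double edges, so that the desired expectation equals $\tfrac12\sum_{(u,v,w)}|A_{u,v,w}|/|\state_i|$, with the factor $\tfrac12$ accounting for the two orderings of $v,w$. Since there are $O(n^3)$ such triples, it suffices to show
\[
|A_{u,v,w}|/|\state_i|=O(i^2/n^4)
\]
for each triple.

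To bound this ratio I would perform a two-step subsidiary switching in the spirit of Lemma~\ref{lem:1edge}, simultaneously destroying both double edges $uv$ and $uw$. Starting from $P\in A_{u,v,w}$, apply a type I class A d-switching to the double edge $uv$, choosing the two auxiliary single edges so as to excommunicate $\{u,v,w\}$; then apply a second type I class A d-switching to the double edge $uw$, again excommunicating all vertices exposed so far. The output is a pairing $P''\in\state_{i-2}$. By the estimates used for $\UB_I$ and $\LB_A$ in Lemma~\ref{l:mbounds}, each step has $\Theta(M_1^2)=\Theta(n^2d^2)$ valid choices of the two auxiliary single edges (forbidden configurations contributing only $o(1)$-relative error under $d=o(\sqrt n)$ and $i\le\imax=O(d^2)$), so the number of forward composite switchings from $P$ is $\Theta(M_1^4)=\Theta(n^4d^4)$. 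Conversely, reaching a given $P''$ from some pairing in $A_{u,v,w}$ via this composite switching requires, in the reverse direction, choosing two 2-paths through $u$ on disjoint point-pairs (at most $O(d^4)$ ordered choices), one 2-path through $v$ ($O(d^2)$ choices) and one through $w$ ($O(d^2)$ choices), for a total of $O(d^8)$ reverse sequences. Double counting yields $|A_{u,v,w}|/|\state_{i-2}|=O(d^8/M_1^4)=O(d^4/n^4)$.

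The remaining ingredient is the ratio $|\state_{i-2}|/|\state_i|$. The standard type I class A switching between $\state_j$ and $\state_{j-1}$, applied as in Section~\ref{sec:UBLB}, gives $|\state_j|/|\state_{j-1}|=(1+o(1))\LB_A(j-1)/\UB_I(j)=(1+o(1))(d-1)^2/(4j)=\Theta(d^2/j)$ for every $1\le j\le\imax$, using that $\LB_A(j-1)=(1-o(1))M_2^2$ and $\UB_I(j)=(1+o(1))4jM_1^2$ under $d=o(\sqrt n)$ and $j\le\imax$. Applying this at $j=i-1$ and $j=i$ yields $|\state_{i-2}|/|\state_i|=\Theta(i^2/d^4)$, so $|A_{u,v,w}|/|\state_i|=O(d^4/n^4)\cdot O(i^2/d^4)=O(i^2/n^4)$. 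Summing over the $O(n^3)$ ordered triples produces the claimed bound $O(i^2/n)$.

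The main obstacle will be the routine but delicate bookkeeping of excluded configurations in the composite subsidiary switching (vertices coinciding, accidental multi-edges arising in the intermediate pairing $P'$ or in $P''$, and the excommunication constraints from the first step restricting the second), which must be shown to contribute only $o(1)$-relative corrections to both the forward and reverse counts, in direct analogy with the proofs of Lemmas~\ref{l:mbounds} and~\ref{lem:1edge}.
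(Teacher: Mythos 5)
Your argument is correct, but it takes a genuinely different route from the paper. The paper fixes the common vertex $u$ and the eight relevant points, and then uses a \emph{single} subsidiary switching (four 2-paths, Figure~\ref{f:2double}) that relocates the two double edges rather than destroying them, so that the image pairing is still in $\state_i$; the forward count is $\Omega(M_2^4)$ and the backward count is $O(i^2)$ because one must choose which two of the $i$ double edges of the image were created, giving the configuration probability $O(i^2/M_2^4)$ directly, with no reference to other \groups. You instead destroy the two double edges by composing two class~A d-switchings, landing in $\state_{i-2}$, which yields $|A_{u,v,w}|/|\state_{i-2}|=O(d^8/M_1^4)$, and you then reintroduce the factor $i^2$ through the \group-size ratio $|\state_{i-2}|/|\state_i|=O(i^2/d^4)$. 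That ratio bound is sound, and in fact you only need the easy direction $|\state_{j-1}|/|\state_j|\le \UB_I(j)/\LB_A(j-1)$, which follows from Lemma~\ref{l:mbounds} alone (your two-sided $(1+o(1))$ asymptotics would additionally require that $f_I$ is typically close to $\UB_I$, but you do not need that here, and in any case it is available from the computation in Lemma~\ref{lem:drejf} without circularity). The trade-off: the paper's single in-stratum switching is more self-contained and avoids any statement about relative \group\ sizes, while your decomposition is more modular --- it separates a ``local'' configuration probability measured against $\state_{i-2}$ from a ratio-of-strata estimate --- at the price of an extra lemma and somewhat heavier bookkeeping for the composite switching (including checking that $\state_{i-1}$ and $\state_{i-2}$ are nonempty when $\state_i$ is, which follows since every $P\in\state_j$ with $j\ge1$ admits $(1-o(1))4jM_1^2>0$ class~A switchings). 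Both yield the same bound $O(i^2/n)$.
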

   \begin{proof}
   Let $u$, $u_1$ and $u_2$ be three distinct vertices. Pick four points in $u$, two points in $u_1$ and two points in $u_2$. Label these points as in Figure~\ref{f:2double}. We estimate the probability that pairs $\{2i-1,2i\}$ for all $1\le i\le 4$ are in $P$. Let $C_1$ be the set of pairings containing these four pairs. For any $P\in C_1$, consider the following switching. Choose sequentially 
   four 2-paths, each of which excommunicates all previously chosen and exposed pairs. Switch these pairs as in Figure~\ref{f:2double}. There are $\Omega(M_2^4)$ ways to perform such a switching for every pairing in $C_1$, whereas for every pairing $P'\in\state_i$ ($i\ge 2$), the number of switchings producing $P'$ is $O(i^2)$, as all points $1\le i\le 8$ are specified and there are $O(i^2)$ ways to choose the two double edges as on the right side of Figure~\ref{f:2double}. Hence, the probability that $P$ contains pairs $\{2i-1,2i\}$ for all $1\le i\le 4$ is $O(i^2/M_2^4)$. The number of ways to choose $u$, $u_1$ and $u_2$ and then to choose points $1\le i\le 8$ is at most $M_4M_2^2$. By linearity, the expected number of pairs of double edges sharing a common vertex is
   $O(M_4M_2^2\cdot i^2/M_2^4)=O(i^2/n)$.
             
\begin{figure}[htb]

 \hbox{\centerline{\includegraphics[width=12cm]{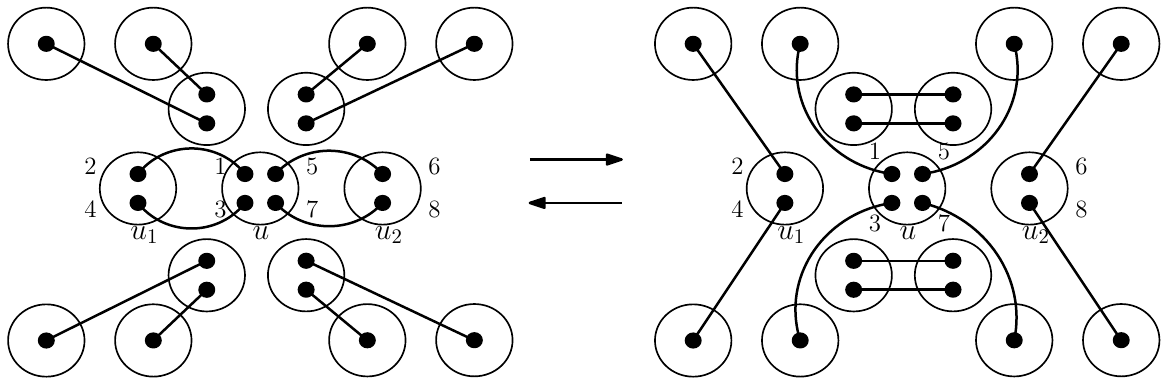}}}
 \caption{\it  Switching away a pair of double edges sharing a common vertex}

\lab{f:2double}

\end{figure}

   \end{proof}

 \subsubsection{Probability of an f-rejection}
  Recall that $P_t$ is the pairing obtained after the $t$-th switching step of phase 3. 
For a given pairing $P$, conditional on $P_{t}=P$, we bound the probability an f-rejection occurs at the $(t+1)$-th switching step. Assume $P\in\state_i$; the probability that a type I switching is performed on $P$ and is f-rejected, is
\[
\rho_I(i)\left(1-\frac{f_I(P)}{\UB_I(i)}\right).
\]
On the other hand, the probability that a type II switching is performed on $P$ and is f-rejected is $O(d^2/n^2)$, as $\rho_{II}(i)=O(d^2/n^2)$ by Lemma~\ref{lem:solution}. 
Taking the union bound over all switching steps, the probability of an f-rejection during phase 3 is at most
\[
\sum_{t\ge 0} \sum_{1\le i\le \imax} \sum_{P\in \state_i} \pr(P_t=P) \left(\rho_I(i)\left(1-\frac{f_I(P)}{\UB_I(i)}\right)+O(d^2/n^2)\right). 
\]
Note that $\sum_{t\ge 0}\pr(P_t=P)$ is the expected number of times that $P$ is reached, which is $\sigma(i)$ for every $P\in\state_i$ by Lemma~\ref{l:equalised}. Hence, the above probability is
\be\lab{fprob}
\sum_{1\le i\le \imax}\sum_{P\in\state_i}  \sigma(i) \left(\rho_I(i)\left(1-\frac{f_I(P)}{\UB_I(i)}\right)\right)+\sum_{1\le i\le \imax} |\state_i| \sigma(i)\cdot O(d^2/n^2). 
\ee
Since $\rho_I(i)\le 1$ always, and since by Lemma~\ref{lem:steps},
 \be\lab{totalsteps}
 \sum_{1\le i\le \imax} |\state_i| \sigma(i) =O(\imax)=O(d^2),
   \ee  
 the probability of an f-rejection during phase 3 is at most
 \be\lab{fprob2}
 \sum_{1\le i\le \imax} \sum_{P\in\state_i}\sigma(i) \left(1-\frac{f_I(P)}{\UB_I(i)}\right)+O(d^4/n^2).
   \ee  
We will prove the following lemma by bounding the first term of~\eqn{fprob2}.

      \begin{lemma} \lab{lem:drejf}
 The probability of an f-rejection during phase 3 is $O(d^2/n)$.
  \end{lemma}
 \begin{proof}
 By~\eqn{fprob2}, it is sufficient to bound $\sum_{1\le i\le \imax} \sum_{P\in\state_i}\sigma(i) \left(1-f_I(P)/\UB_I(i)\right)$ by $O(d^2/n)$. Note that
   \bean
\sum_{1\le i\le \imax} \sum_{P\in\state_i}\sigma(i) \left(1-\frac{f_I(P)}{\UB_I(i)}\right)=\sum_{1\le i\le \imax} \sigma(i)|\state_i|\cdot \ex\left(1-\frac{f_I(P)}{\UB_I(i)}\right),
 \eean
where the above expectation is taken on a random pairing $P\in\state_i$. It will be sufficient to show that this expectation is $O(1/n)$ for every $1\le i\le\imax$, since then
\[
\sum_{1\le i\le \imax} \sigma(i)|\state_i|\cdot \ex\left(1-\frac{f_I(P)}{\UB_I(i)}\right)=O(\imax /n)=O(d^2/n)
\]
and the lemma follows.
 
Assume $P\in\state_i$. Let $X=\UB_{I}(i)-f_I(P)$. 
Pick and label pairs and vertices as in the description of the type I switching (see Figure~\ref{f:doubleI}). Here, $X$ counts choices and labellings such that any of the following occur:
 \begin{enumerate}
 \item[(a)] the six vertices are not all distinct;
 \item[(b)] the vertices are distinct and at least two new double edges are created after performing the switching;
 \item[(c)] some triple edge is created by the switching because there is a double edge already  between $u_1$ and $u_2$ or $u_3$, or between $v_1$ and $v_2$ or $v_3$.
 \end{enumerate}
It is easy to see that the number of choices for (a) is $O(idM_1)$. 
 For (b), there are two sub-cases (we use $u\sim v$ to denote that $u$ is adjacent to $v$).
 \begin{enumerate}
 \item[(b1)] $u_1\sim u_2$ and $u_1\sim u_3$, or $v_1\sim v_2$ and $v_1\sim v_3$, or $u_1\sim u_2$ and $v_1\sim v_3$, or $u_1\sim u_3$ and $v_1\sim v_2$;
  \item[(b2)]  $u_1\sim u_3$ and $v_1\sim v_3$, or $u_1\sim u_2$ and $v_1\sim v_2$.
 \end{enumerate}
It is straightforward to see that the number of choices for (b1) is at most $O(id^4)$.  For (b2), we consider a random $P\in\state_i$ which contains $u_1v_1$ as a double edge.   Let $x$ be a random pair chosen uniformly from $P$; with $u_2$ and $v_2$ denoting its end vertices. By Lemma~\ref{lem:1edge}, the probability that $u_1$ is adjacent to $u_2$ and $v_1$ is adjacent to $v_2$ is $O(d^2/n^2)$.  
  Hence, the expected number of choices for (b2) is $O(iM_1^2\cdot d^2/n^2)$, as there are $O(i)$ ways to choose $u_1v_1$ and label their end points, $M_1^2$ ways to sequentially choose two random pairs, uniformly at random and with repetition, and by the above argument, the probability that one of these two pairs forcing (b2) is $O(d^2/n^2)$. By~\eqn{UBI} and since $i\le \imax=O(d^2)=o(M_1)$, $iM_1^2\cdot d^2/n^2=\UB_I(i)\cdot O(d^2/n^2)$.
 
 We can count the possibilities for (c) by first choosing a vertex (for $u_1$ or $v_1$) incident with two double edges --- by Lemma~\ref{lem:2double} there are $O(i^2/n)$ ways to do this in expectation --- and then choosing the remaining pairs to be switched, in $O(dM_1)$ ways. For a random $P\in\state_i$, the expected number of type I switchings satisfying (c) is therefore $O(i^2dM_1/n) =  o(i dM_1 )$ since $i=O(d^2)=o(n)$.

Combining the three bounds, we find that $\ex X=O(idM_1+ id^4)+ \UB_I(i)\cdot O(d^2/n^2)$ and so since $d^2=o(n)$ we have
 $\ex\left(1-f_I(P)/\UB_I(i)\right)$ is  
 $ O(idM_1+ id^4)/\UB_I(i)+O\left(d^2/n^2\right)=O(1/n)$,
 as desired.\qed
 \end{proof}
 
\subsubsection{Probability of a b-rejection}
Let $\state^-_{\tau,\alpha}(P)$ denote the set of pairings that can produce $P$ using a type $\tau$, class $\alpha$ switching.\lab{def:state-}

Now, we bound the probability of a b-rejection during phase 3. 
Given $P\in\state_i$, we consider a pairing $P'\in \state^-_{I,A}(P)$. Then $P'$ must be in $\state_{i+1}$. Conditional on $P_{t-1}=P'$, the probability that the switching $S$, which converts $P'$ to $P$, is chosen and is not f-rejected at the $t$-th switching step is
\[
\frac{\rho_I(i+1)}{\UB_I(i+1)}.
\]
Conditional on that, the probability that $P$ is b-rejected is
$1-\LB_A(i)/b_A(P)$. Similarly, for any $P'\in \state^-_{I,B}(P)\cup \state^-_{II,B}(P)$, conditional on $P_{t-1}=P'$, the probability that the switching converting $P'$ to $P$ is chosen at the $t$-th switching and is not f-rejected but is b-rejected is
\[
\frac{\rho_I(i)}{\UB_I(i)}\left(1-\frac{\LB_B(i)}{b_B(P)}\right),
\]
if $P'\in \state^-_{I,B}(P)$, and is
\[
\frac{\rho_{II}(i-1)}{\UB_{II}(i-1)}\left(1-\frac{\LB_B(i)}{b_B(P)}\right).
\]
if
$P'\in \state^-_{II,B}(P)$.
Hence, by the union bound, the probability that a b-rejection occurs during phase 3 is at most
\bea
&&\sum_{t\ge 1} \sum_{0\le i\le \imax-1} \sum_{P\in \state_i}\sum_{P'\in\state^-_{I,A}(P)} \pr(P_{t-1}=P') \frac{\rho_I(i+1)}{\UB_I(i+1)}\left(1-\frac{\LB_A(i)}{b_A(P)}\right)\non\\
&&\hspace{0.3cm}+\sum_{t\ge 1} \sum_{1\le i\le \imax} \sum_{P\in \state_i}\left(1-\frac{\LB_B(i)}{b_B(P)}\right)\left(\sum_{P'\in\state^-_{I,B}(P)} \pr(P_{t-1}=P') \frac{\rho_I(i)}{\UB_I(i)}\right.\non\\
&&\qquad\qquad\qquad\qquad\qquad\qquad\qquad\qquad\left.+\sum_{P'\in\state^-_{II,B}(P)} \pr(P_{t-1}=P') \frac{\rho_{II}(i-1)}{\UB_{II}(i-1)}\right)\non\\
&&=\sum_{0\le i\le \imax-1} \sum_{P\in \state_i}\sum_{P'\in\state^-_{I,A}(P)} \sigma(i+1) \frac{\rho_I(i+1)}{\UB_I(i+1)}\left(1-\frac{\LB_A(i)}{b_A(P)}\right)\non\\
&&\hspace{0.3cm}+\sum_{1\le i\le \imax} \sum_{P\in \state_i}\left(1-\frac{\LB_B(i)}{b_B(P)}\right)\left(\sum_{P'\in\state^-_{I,B}(P)} \sigma(i) \frac{\rho_I(i)}{\UB_I(i)}+\sum_{P'\in\state^-_{II,B}(P)} \sigma(i-1) \frac{\rho_{II}(i-1)}{\UB_{II}(i-1)}\right).\non 
 \eea
 
 By~\eqn{equaliseclass1}, $\sigma(i) \rho_I(i)/\UB_I(i)=\sigma(i-1) \rho_{II}(i-1)/\UB_{II}(i-1)$ for every $1\le i\le \imax$. So, using $\rho_I(i)\le 1$,
 the probability of a b-rejection during phase 3 is at most
 \bea
&&\sum_{0\le i\le \imax-1} \sum_{P\in \state_i}\left(\sum_{P'\in\state^-_{I,A}(P)}  \frac{\sigma(i+1)}{\UB_I(i+1)}\left(1-\frac{\LB_A(i)}{b_A(P)}\right)\right)\non\\
&&+\sum_{1\le i\le \imax} \sum_{P\in \state_i}\left(\sum_{P'\in\state^-_{I,B}(P)\cup \state^-_{II,B}(P)}  \frac{\sigma(i)}{\UB_I(i)}\left(1-\frac{\LB_B(i)}{b_B(P)}\right)\right).\lab{bprob}
 \eea
By the definition of $b_{\alpha}(P)$ for $\alpha\in\{A,B\}$, 
\[
|\state^-_{I,A}(P)|=b_A(P),\quad |\state^-_{I,B}(P)\cup \state^-_{II,B}(P)|=b_B(P).
\]
Then, the above expression equals
\bel{bprob3}
\sum_{0\le i\le \imax-1} \sum_{P\in \state_i}\frac{\sigma(i+1)}{\UB_I(i+1)}\Big(b_A(P)-\LB_A(i)\Big)+\sum_{1\le i\le \imax} \sum_{P\in \state_i}\frac{\sigma(i)}{\UB_I(i)}\Big(b_B(P)-\LB_B(i)\Big).
\ee
By~\eqn{xratio} and recalling that $x_j^*=\sigma(j)/|\Phi_1|$, 
\bel{bound1}
\frac{\sigma(i+1)}{\UB_I(i+1)} =O\left(\frac{\sigma(i)}{\LB_A(i)}\right),
\ee                              
whereas by~\eqn{UBI} and~\eqn{LBBdouble},
\bel{bound2}
\frac{\sigma(i)}{\UB_I(i)}=\frac{\sigma(i)}{\LB_B(i)}\frac{\LB_B(i)}{\UB_I(i)}=\frac{\sigma(i)}{\LB_B(i)} \cdot O\left(\frac{idM_2}{iM_1^2}\right)=\frac{\sigma(i)}{\LB_B(i)}\cdot O(d/n).
\ee
Substituting bounds~\eqn{bound1} and~\eqn{bound2} into~\eqn{bprob3} yields
 \bea
&&\sum_{0\le i\le \imax} \sum_{P\in\state_i} O(\sigma(i)) \left(\frac{b_A(P)-\LB_A(i)}{\LB_A(i)}+ (d/n) \left(\frac{b_B(P)-\LB_B(i)}{\LB_B(i)}\right)\right)\non\\
&&\hspace{0.5cm} =\sum_{1\le i\le \imax}  O(\sigma(i)|\state_i|) \left(\frac{\ex b_A(P)-\LB_A(i)}{\LB_A(i)}+ (d/n) \left(\frac{\ex b_B(P)-\LB_B(i)}{\LB_B(i)}\right)\right),\lab{bprob2}
\eea          
by noting that $b_{A}(P)=0$ for $P\in\state_{\imax}$ and $b_B(P)=\LB_B(0)=0$ for $P\in\state_0$, and by resetting $\LB_{A}(\imax)=0$ as the first term in~\eqn{bprob3} does not include $i=\imax$. By bounding these expectations we will prove the following lemma.
   
 \begin{lemma} \lab{lem:drejb}
 The probability of a b-rejection during phase 3 is $O(d^2/n)$.
  \end{lemma}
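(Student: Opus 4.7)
The plan is to bound the b-rejection probability using the reduction \eqn{bprob2} together with \eqn{totalsteps}, which gives $\sum_{1\le i\le\imax}\sigma(i)|\state_i|=O(d^2)$. What remains is to show that the two expected ratios $(\ex b_A(P)-\LB_A(i))/\LB_A(i)$ and $(\ex b_B(P)-\LB_B(i))/\LB_B(i)$, for a uniformly random $P\in\state_i$, are small enough that the whole sum is $O(d^2/n)$.

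To estimate these, I would mimic the strategy used in the proof of Lemma~\ref{lem:drejf}. Write $b_A(P)=M_2^2-X_A(P)$, where $X_A(P)$ counts the excluded type~I templates producing $P$, split into the three sub-cases (a)--(c) introduced in the proof of Lemma~\ref{l:mbounds}(ii); by construction $\LB_A(i)=M_2^2-X_A^{\det}(i)$, where $X_A^{\det}(i)$ is the deterministic upper bound on $X_A(P)$ obtained there. Hence $\ex b_A(P)-\LB_A(i)=X_A^{\det}(i)-\ex X_A(P)$, and an analogous decomposition $b_B(P)=16i(d-2)M_2-Y_B(P)$ applies to the class~B term (using cases (a'), (b), (c) from the $b_B$ analysis). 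The contribution from case (a)/(a') is essentially deterministic and matches tightly against the corresponding portion of $X_A^{\det}(i)$. The remaining gaps come from cases (b) and (c), which I would control by computing $\ex X_A^{(b)}(P)$ and $\ex X_A^{(c)}(P)$ (and similarly for $Y_B$) using Lemma~\ref{lem:1edge} to bound the probability of single-edge adjacency configurations by $O(d^2/n^2)$, and Lemma~\ref{lem:2double} to handle contributions where two double edges share a vertex near the switching location (contributing $O(i^2/n)$).

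Substituting these expected bounds into \eqn{bprob2} and invoking $\sum\sigma(i)|\state_i|=O(d^2)$ would then yield the claimed $O(d^2/n)$ bound on the b-rejection probability.

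The main obstacle is that the deterministic bounds $X_A^{\det}(i)$ and the true expected counts $\ex X_A(P)$ (and similarly for $Y_B$) are of the same order of magnitude, so the crude estimate $\ex b_\alpha(P)-\LB_\alpha(i)\le X_\alpha^{\det}(i)$ would yield only $O(d^3/n)$. To obtain the tighter target $O(d^2/n)$ one has to argue that the two quantities agree to within a suitably smaller correction, via subsidiary switching arguments parallel to Lemmas~\ref{lem:1edge} and~\ref{lem:2double}. The key leverage is that the counts $b_A$ and $b_B$ restrict the four single-edge positions of the switching template to pairs that actually represent single edges (rather than arbitrary pairs in $P$), which tightens the comparison between the deterministic and expected counts by a factor of $d$ in the relevant places; the bulk of the work is carrying out this comparison carefully across each of the sub-cases.
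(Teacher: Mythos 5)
Your plan follows essentially the same route as the paper's proof: starting from \eqn{bprob2} and \eqn{totalsteps}, the paper bounds $\ex b_A(P)$ from above by exhibiting, via the subsidiary-switching Lemmas~\ref{lem:1edge} and~\ref{lem:2double}, an expected count of genuinely excluded configurations that matches the dominant terms $16idM_2$ and $3M_2d^3$ subtracted in $\LB_A(i)$, leaving a gap of only $O(d^6+M_2d^2)$ --- exactly the factor-of-$d$ improvement over the crude $O(d^3/n)$ estimate that you identify as the main obstacle. The one simplification you miss is that the refined expectation argument is only needed for class A: for class B the trivial bound $b_B(P)\le 16i(d-2)M_2$ already gives $\ex b_B(P)-\LB_B(i)=O(id^4)$, which suffices because of the $d/n$ prefactor in \eqn{bprob2}.
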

 \begin{proof} 
We first bound $\ex b_A(P)$ from above for a random $P\in\state_i$. Clearly $M_2^2$ is an upper bound. However, we can exclude from it the choices of the pair of 2-paths such that
\begin{enumerate}
\item[(a)] Exactly one of these four pairs is contained in a double edge and the two paths do not share any vertex;
\item[(b)] All of the four pairs in the 2-paths represent single edges, and the two paths are vertex-disjoint, and $u_i$ is adjacent to $v_i$ for exactly one of $i=1,2,3$.
\end{enumerate}
Clearly, the sets of pairings counted in (a) and (b) do not intersect. Let $X_a$ and $X_b$ denote the sizes of these two sets. Then $b_A(P)\le M_2^2-X_a-X_b$. We wish to estimate lower bounds for $X_a$ and $X_b$.

For (a), consider the case that the pair \{3,7\} is contained in a double edge, as on the right side of Figure~\ref{f:doubleIb}. There are $4i$ ways to choose point 3. That choice automatically sets points 7,9 and 10. Then, there are $(d-2)$ ways to pick point 1, which sets point 5 as well.  Given the choice of the first 2-path, there are $M_2-O(d^2+id)$ ways to choose the other 2-path so that it does not use any vertex $u_i$, and it does not use a pair contained in a double edge. Hence, by symmetry,
$X_a=16i(d-2)(M_2-O(d^2+id))-O(X')$, where $X'$ counts choices where one of the two 2-paths uses two pairs such that each pair is contained in a double edge. By Lemma~\ref{lem:2double}, $\ex(X')=O(M_2 i^2/n)=O(d^6)$, as $i\le\imax=O(d^2)$, and the expected number of choices for the 2-paths with both pairs contained in double edges is $O(i^2/n)$, whereas $M_2$ bounds the number of choices for the other 2-path. Now, we have
$\ex X_a\ge 16i(d-2)(M_2-O(d^2+id))-O(d^6)$.

For (b), there are $M_2$ ways to choose the first path and then approximately $2(d-1)^3+(d-2)^2(d-1)$ ways to choose the other 2-path. So, $X_b=M_2(2(d-1)^3+(d-2)^2(d-1))-Y$, where $Y$ counts the choices of the two 2-paths such that 
 \begin{enumerate}
 \item[(c)] the two paths are not vertex disjoint, or
 \item[(d)] they are vertex-disjoint, and one of the two paths contains a double edge and $u_i\sim v_i$ for some $i$, or
 \item[(e)] they are vertex-disjoint, and at least two of $u_iv_i$ are edges. 
 \end{enumerate}
 The number of choices for (c) is $O(M_2d^2)$; the number of choices for (d) is $O(id^4)$. For (e),  first consider the case that $u_1v_1$ and $u_2v_2$ are edges. There are $O(M_1)$ choices for the edge $u_1v_1$. Consider a random pairing $P\in\state_i$ that contains $u_1v_1$ as an edge. Pick a random pair $x$ in $P$. Denote its end vertices by $u_2$ and $v_2$. By Lemma~\ref{lem:1edge}, the probability that $u_1\sim u_2$ and $v_1\sim v_2$ is $O(d^2/n^2)$. Thus, the expected number of such choices for $u_1,u_2,v_1,v_2$ and pairs between them is $O(M_1^2\cdot d^2/n^2)$. After fixing these, there are $O(d^2)$ ways to choose edges $u_1u_3$ and $v_1v_3$. This gives $O(M_1^2d^4/n^2)=O(d^6)$ for the number of choices such that $u_1\sim v_1$ and $u_i\sim v_i$ for some $i=2,3$ by symmetry. Lastly, consider the case that $u_i\sim v_i$ for $i=2$ and 3. There are at most $n^2$ ways to choose $u_1$ and $v_1$. Then, randomly pick 2 pairs and denote their end vertices by $u_2$ and $v_2$, and $u_3$ and $v_3$ respectively. A trivial adaptation of the proof as Lemma~\ref{lem:1edge} shows that the probability that the four edges $u_1u_i$ and $v_1v_i$ for $i=2,3$ are present is $O(d^4/n^4)$. Hence, the number of choices in this case is
 $O(n^2\cdot M_1^2\cdot d^4/n^4)=O(d^6)$.
 This gives
 \[
 \ex X_b=2M_2(d-1)^3+M_2(d-2)^2(d-1)-O(M_2d^2+id^4+d^6)=3M_2d^3+O(M_2d^2+d^6),
 \]
 using that $i\le \imax=O(d^2)$.

 Combining (a) and (b), 
$\ex b_A(P)\le M_2^2-16idM_2-3M_2d^3+O(d^6+M_2d^2)$.
Recall from~\eqn{LBAdouble} that $\LB_A(i)= M_2^2-M_2d(16i+9d+3d^2)$. So,
$\ex b_A(P)-\LB_A(i)=O(d^6+M_2d^2)$.

It is easy to see that $b_B(P)\le 16i(d-2)M_2$ for every $P\in\state_i$. Hence, by~\eqn{LBBdouble},
\[
\ex (b_B(P)-\LB_{B}(i)) =O(id^4).
\]
By~\eqn{bprob2}, the probability of a b-rejection during phase 3 is
\bean
&&\sum_{1\le i\le \imax}  O(\sigma(i)|\state_i|) \left(\frac{d^6+M_2d^2}{\LB_A(i)}+ (d/n) \left(\frac{id^4}{\LB_B(i)}\right)\right).
\eean 
It is easy to see that $\LB_A(i)=\Omega(M_2^2)$ and $\LB_B(i)=\Omega(idM_2)$ by~\eqn{LBBdouble} and the assumption that $d^2=O(n)$. So the above probability is bounded by
\[
O(1/n)\sum_{1\le i\le \imax} \sigma(i)|\state_i|=O(\imax/n)=O(d^2/n),
\]
by~\eqn{totalsteps}. The lemma follows.\qed
 \end{proof}


\section{Initial rejection: defining $\acceptable$}
\lab{sec:A}

Here we define $\acceptable$, after some preliminaries. 
It is easy to see that
\[
|\Phi|=(M_1-1)!!:=\prod_{i=0}^{M_1/2-1}(M_1- 2i-1), 
\]
and the probability that a random $P\in\Phi$ contains a given set of $s$ pairs  is
\be\lab{pairprob}
\frac{(M_1-2s-1)!!}{(M_1-1)!!}=\prod_{i=0}^{s-1}\frac{1}{M_1-1-2i}.
 \ee 
Let $D(P)$, $L(P)$ and $T(P)$ denote the number of double edges, loops and triple edges, respectively, in a pairing in $P\in \Phi$. Define 
\[
 \acceptable = \{P\in \Phi: \ D(P)\le B_D,\, L(P)\le B_L,\, T(P)\le B_T\} 
\]
 where
 \be
B_L=\max\{\lceil \log dn\rceil,4d\},\quad B_T=\max\{\lceil \log dn\rceil, \lceil d^3/n\rceil\},\quad \ B_D=\left\lfloor(1+\gamma)\frac{(d-1)^2}{4}\right\rfloor, \lab{initialbounds}
\ee
noting that the definition of $B_D$ agrees with the definition of $\imax$ in~\eqn{BD}, and subject to the following proviso. If $\gamma$ is such that no $\eps$ satisfies the conditions in Lemma~\ref{lem:solution} for phase 3, we replace the bound on $D(P)$ by the condition $D(P)=0$. That is, $i_1=0$ for phase 3. (This will increase the probability of initial rejection, but only for $n=O(d)$, as discussed after Lemma~\ref{lem:solution}.)

  \begin{lemma}\lab{l:initial} Assume  $d=o(\sqrt{n})$. For any $\gamma>0$, the probability of an initial rejection is at most $1/(1+\gamma)+o(1)$.
  \end{lemma}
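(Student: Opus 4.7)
The plan is to decompose via the union bound
\[
\pr(P\notin\acceptable) \le \pr(D(P)>B_D) + \pr(L(P)>B_L) + \pr(T(P)>B_T),
\]
and to show that the first term gives the main contribution $1/(1+\gamma)$ while the latter two are $o(1)$.

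For the $D$-term, I would first estimate $\ex D(P)$ directly from~\eqn{pairprob}. Counting potential double edges between a fixed pair $\{u,v\}$ of distinct vertices as $d^2(d-1)^2/2$ (pick two points in $u$, two in $v$, and match them in the two possible ways), summing over vertex pairs, and multiplying by the pair probability $1/((M_1-1)(M_1-3))$ from~\eqn{pairprob} yields
\[
\ex D(P) \le \frac{n(n-1)d^2(d-1)^2}{4(M_1-1)(M_1-3)} = \frac{(d-1)^2}{4}\bigl(1 + O(1/n)\bigr).
\]
Since $B_D + 1 > (1+\gamma)(d-1)^2/4$ by~\eqn{initialbounds}, Markov's inequality gives $\pr(D(P) > B_D) \le 1/(1+\gamma) + O(1/n)$. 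The exceptional clause in~\eqn{initialbounds} that would set $i_1 = 0$ does not trigger under $d = o(\sqrt n)$ for $n$ sufficiently large, as noted after Lemma~\ref{lem:solution}.

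For the loop and triple-edge terms, I would use factorial-moment estimates combined with a Poisson-style tail. Since any $k$ distinct potential loops use pairwise-disjoint points, summing over unordered $k$-tuples of such loops and applying~\eqn{pairprob} gives
\[
\ex\binom{L(P)}{k} \le \frac{1}{k!}\Bigl(n\binom{d}{2}\Bigr)^k\prod_{i=0}^{k-1}\frac{1}{M_1-2i-1} \le \frac{\lambda_L^k(1+o(1))^k}{k!},
\]
with $\lambda_L = (d-1)/2$, valid for $k = o(dn)$. Markov's inequality on $\binom{L(P)}{k}$ together with Stirling's formula yields the Poisson-type tail $\pr(L(P) \ge k) \le (e\lambda_L/k)^k(1+o(1))^k$; setting $k = B_L$, the condition $B_L \ge 4d \ge 8\lambda_L$ makes $e\lambda_L/B_L \le e/8 < 1$, and $B_L \ge \log(dn) \to \infty$ then forces $\pr(L(P) > B_L) \le (e/8)^{B_L}(1+o(1))^{B_L} = o(1)$. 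An entirely analogous count of $d^2(d-1)^2(d-2)^2/6$ potential triple edges between each vertex pair gives $\lambda_T = \Theta(d^3/n)$ with $\lambda_T \le d^3/(12n)(1+o(1))$; with $B_T \ge \max(\log(dn), d^3/n)$, the ratio $e\lambda_T/B_T$ stays bounded below $1$, and $B_T \to \infty$ yields $\pr(T(P) > B_T) = o(1)$.

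The main obstacle, such as it is, is bookkeeping: one must verify that $k = B_L$ and $k = B_T$ lie in the range $k = o(dn)$ so that the denominator factors $M_1 - 2i - 1$ can be absorbed into $(1+o(1))^k$, and that the Poisson-tail ratios ($e/8$ for loops, $e/12$ for triple edges) stay uniformly below $1$ across the relevant ranges of $d$. Both are immediate from $d = o(\sqrt n)$, since $B_L = O(d + \log(dn))$ and $B_T = O(d^3/n + \log(dn))$ are both $o(dn)$. Summing the three bounds completes the proof.
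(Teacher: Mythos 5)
Your proposal is correct in substance and reaches the right bound, but it takes a partly different route from the paper. For the dominant term you do exactly what the paper does: compute $\ex D = (1+o(1))(d-1)^2/4$ by a first-moment count over pairs of parallel pairs and apply Markov's inequality against $B_D+1>(1+\gamma)(d-1)^2/4$, which is where the $1/(1+\gamma)$ comes from. For loops and triple edges, however, the paper simply cites Lemma~3.2 of~\cite{MWenum}, which gives the stronger bound $O(d^2/n)$ for the event that $L(P)>B_L$ or $T(P)>B_T$ \emph{or that $P$ contains any multi-edge other than a double or triple edge}; you instead give a self-contained factorial-moment/Poisson-tail argument. Your argument works: the bounds $\ex\binom{L}{k}\le \lambda_L^k(1+o(1))^k/k!$ and its triple-edge analogue are valid (pairs of potential loops or triple edges sharing a point contribute probability $0$, so the product over disjoint-pair probabilities is an upper bound), the range check $k=o(dn)$ holds, and the ratios $e/8$ and $e/12$ are safely below $1$. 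What you buy is independence from the external reference; what you lose is the quantitative rate ($o(1)$ via $(e/8)^{\log dn}$ rather than the explicit $O(d^2/n)$ of the citation), which is harmless for this lemma.

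The one genuine omission is that your union bound covers only the three displayed conditions $D\le B_D$, $L\le B_L$, $T\le B_T$, whereas $\acceptable$ is intended (see the prose in Section~\ref{s:intro} and the definition of $\Phi_1$ for phase~3) to also exclude all multi-edges other than double and triple non-loop edges --- quadruple edges, repeated loops, and so on. The paper's citation of~\cite{MWenum} covers these; your decomposition does not. The fix is one more first-moment term: the expected number of quadruple edges is $O(d^4/n^2)$ and the expected number of pairs of loops at a common vertex is $O(d^2/n)$, both $o(1)$ under $d=o(\sqrt n)$, so Markov's inequality disposes of them. You should add this term explicitly. Your handling of the $i_1=0$ proviso is correct.
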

\proof By~\cite[Lemma 3.2]{MWenum}, the probability that a random pairing $P\in \Phi$ contains more than $B_L$ single loops or more than $B_T$ triple edges, or any multi-edge other than double or triple edges, is $O(d^2/n)=o(1)$. We also have 
\[
\ex D=\sum_{1\le i<j\le n}\frac{\binom{d}{2}\binom{d}{2}2!}{(M_1-1)(M_1-3)}= (1+o(1))(d-1)^2/4.
\]
By Markov's inequality, the probability that a random pairing $\P\in \Phi$ has at least $(1+\gamma)(d-1)^2/4$ double edges is at most $1/(1+\gamma)+o(1)$. The lemma follows, recalling again  that by the remarks after Lemma~\ref{lem:solution}, the condition $D(P)=0$ will only be imposed for $n=O(d)$.\qed

 

\section{Reduction of loops and triple edges}
\lab{sec:loop-triple}

Let $P_0$ be a random pairing in $\Phi$. If $P_0\in \acceptable$, then \NameA\ sequentially enters phases 1 and 2 where the loops and triple edges will be switched away, one at a time. In each of these two phases, all switchings have the same type and   class. Moreover, these are the same switchings as used for enumeration purposes in~\cite{MWenum}. In view of the remark below Lemma~\ref{l:equalised}, phase 1 will be the same as the corresponding phase in the algorithm {\em DEG} of~\cite{MWgen}. Phase 2 is similar to phase 1, just with different switchings. (For the range of $d$ considered in~\cite{MWgen} there are no triple edges in pairings in $\acceptableMW$ and so {\em DEG} has no phase of reduction of triple edges.)  
 
We define the switchings  and analyse the rejection probabilities  for phases 1 and 2 in Sections~\ref{loopreduction} and~\ref{triplereduction} respectively. This analysis is similar to that in~\cite{MWgen}.   

\subsection{Phase 1: loop reduction}
\lab{loopreduction}
In this phase, only one type of switching is used, which is defined as follows. 

{\em Type L switching.} Let $P$ be a pairing containing at least one loop. Pick a loop from $P$ and label its end points by 1 and 2. Pick another two pairs $y$ and $z$, both of which represent single non-loop edges, and label their end points by 3 and 4, and 5 and 6 respectively. Let the vertices containing these points be labelled as in Figure~\ref{f:loop}. The type L switching replaces these pairs by \{1,3\}, \{4,6\} and \{2,5\}.  This switching is valid only if the five vertices $v_i$ ($1\le i\le 5$) are all distinct, and there are no new multi-edges or loops created after its performance.  All type L switchings are in class $A$.

 \begin{figure}[htb]

 \hbox{\centerline{\includegraphics[width=9cm]{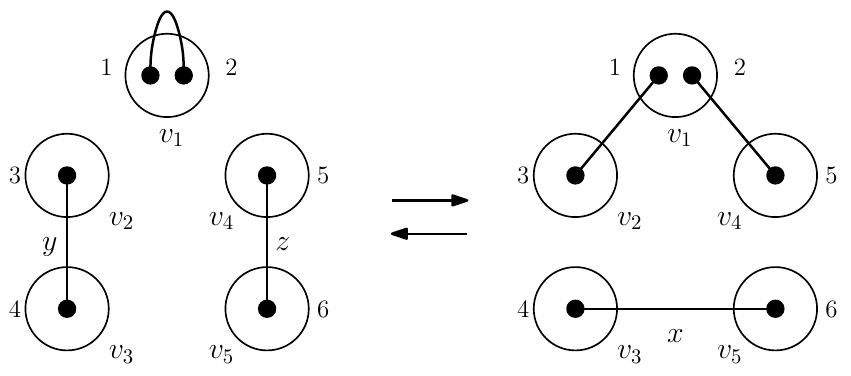}}}
 \caption{\it   type L switching}

\lab{f:loop}

\end{figure}

 Note that a type L switching always reduces the number of loops in a pairing by one. In this phase, $\state_i$ is the set of pairings in $\acceptable$ with exactly $i$ loops, and  $\imax$ is set equal to $B_L$ defined in~\eqn{initialbounds}. Here, $\Phi_1=\acceptable$, which is $\cup_{0\le i\le \imax} \state_i$. 
We set $\rho_{L}(i)=1$ for every $1\le i\le \imax$.

Define
\bea
\UB_{L}(i)&=&2iM_1^2\non\\
\LB_{A}(i)&=&M_2M_1- 2dM_1(3i+6B_D+9B_T+3d+d^2).\lab{loopbound}
\eea

\begin{lemma}
During the   loop reduction phase, for each $P\in \state_i$, $f_{L}(P)\le \UB_{L}(i)$ and $b_{A}(P)\ge \LB_{A}(i)$.
\end{lemma}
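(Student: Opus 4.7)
The upper bound $f_L(P) \le 2iM_1^2$ follows from a direct product count: a type L switching on $P$ is specified by choosing one of the $i$ loops of $P$ with an ordering of its two endpoints as the points $1, 2$ (so $2i$ choices) and then two further ordered pairs $\{3,4\}$ and $\{5,6\}$, each contributing at most $M_1$ labelings (pick one endpoint; its mate determines the other). The product is $2i\cdot M_1\cdot M_1 = \UB_L(i)$; the validity constraints (five distinct vertices, no new multi-edges or loops) can only decrease the count.

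For $b_A(P) \ge \LB_A(i)$, the plan is to count labeled configurations in $P$ matching the right side of Figure~\ref{f:loop}: triples of pairs $\{1,3\}, \{2,5\}, \{4,6\}$ with $1, 2 \in v_1$, $3 \in v_2$, $4 \in v_3$, $5 \in v_4$, $6 \in v_5$, whose reversal is a valid type L switching. The unrestricted total is $M_2 \cdot M_1$, where $M_2 = nd(d-1)$ counts labeled $2$-paths at $v_1$ (choose $v_1$, then ordered points $1, 2 \in v_1$, whose mates determine $v_2, v_4$) and $M_1$ counts the labeling of the third pair via point $4$. From $M_2M_1$ I would subtract, by union bound, the configurations violating validity, split into four natural classes: (a) pair $\{1,3\}$ or $\{2,5\}$ is a loop at $v_1$ or lies in a multi-edge of $P$; (b) pair $\{4,6\}$ is a loop or lies in a multi-edge; (c) a coincidence $v_3$ or $v_5 \in \{v_1, v_2, v_4\}$; (d) the edge $v_2 v_3$ or $v_4 v_5$ already exists in $P$, forcing a new multi-edge on reversal.

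Each class admits a direct combinatorial bound. Class (a) contributes $\le 4i(d-2)$ bad $2$-paths from loops (pick a loop, label its endpoints in $2$ ways as $\{1,3\}$ or $\{2,5\}$, times $d-2$ for the other point in $v_1$) plus $\le (8B_D + 12B_T)(d-1)$ from multi-edges (each double edge contributes $4$ and each triple edge $6$ labeled (pair, orientation) choices, times $d-1$ for the second point in $v_1$, doubled over the two pair positions); combined with the $M_1$ choices for $\{4,6\}$ this is $\le d(4i + 8B_D + 12B_T)M_1$. Class (b) gives $\le (2i + 4B_D + 6B_T)$ bad labelings of $\{4,6\}$ by the analogous loop/multi-edge count, multiplied by $M_2 \le dM_1$, totalling $\le d(2i + 4B_D + 6B_T)M_1$. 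Class (c) has six coincidences, each forcing point $4$ or its mate into a specified $d$-point vertex ($\le d$ bad labelings per $2$-path), contributing $\le 6dM_2 \le 6d^2 M_1$. Class (d) uses that $v_2$ and $v_4$ each have at most $d$ neighbours, each carrying $d$ points, giving $\le 2d^2 M_2 \le 2d^3 M_1$. Summing the four contributions and extracting the common factor $2dM_1$ yields a total bad count of $2dM_1(3i + 6B_D + 9B_T + 3d + d^2)$, whence $b_A(P) \ge M_2M_1 - 2dM_1(3i + 6B_D + 9B_T + 3d + d^2) = \LB_A(i)$. The work is essentially pure bookkeeping; the main task is to verify that (a)--(d) cover every validity violation (overlaps, such as a loop pair that also sits in a multi-edge, are harmless under the union bound) and that the arithmetic lines up exactly with $\LB_A(i)$ after the substitution $M_2 \le dM_1$.
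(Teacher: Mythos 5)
Your proposal is correct and follows essentially the same route as the paper: the same direct product count $2i\cdot M_1^2$ for the upper bound, and for the lower bound the same decomposition of the bad configurations into the four classes (2-path touching a loop/multi-edge, the pair $\{4,6\}$ being a loop or in a multi-edge, a vertex coincidence, and a pre-existing $v_2v_3$ or $v_4v_5$ edge), with bounds that sum to exactly the paper's $2dM_1(3i+6B_D+9B_T+3d+d^2)$ after using $M_2\le dM_1$. The only differences are immaterial tightenings (e.g.\ $d-2$ in place of $d$ in the loop subcase).
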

\begin{proof}
We first prove the upper bound. Let $P\in\state_i$. The number of ways to choose a loop and label its end points in $P$ is $2i$. Then, the number of ways to choose another two pairs and label their end points is at most $M_1^2$. This shows that $f_{L}(P)\le \UB_L(i)$ for any $P\in\state_i$. 

Next we prove the lower bound. Given $P\in\state_i$, $b_A(P)$ is the number of type L switchings that produce $P$. To count such switchings, we need to specify a 2-path and label the points inside by \{3,1\} and \{2,5\}, together with another pair representing a single edge, whose end points are labelled by $4$ and $6$, and label all vertices containing these points as on the right side of Figure~\ref{f:loop}. Moreover, the five vertices must be distinct. There are $M_2$ ways to pick the 2-path and $M_1$ ways to pick the other pair. We need to exclude every choice in which at least one of the following holds:
\begin{enumerate}
\item[(a)] the 2-path contains a loop, or a double edge, or a triple edge;
\item[(b)] pair $\{4,6\}$ is a loop or is contained in a multi-edge;
\item[(c)] pair $\{4,6\}$ share a vertex with the 2-path;
\item[(d)] $v_2$ and $v_3$ are adjacent or $v_4$ and $v_5$ are adjacent. 
\end{enumerate}

We first show that the number of choices for (a) is at most $2(2id M_1+ 4B_DdM_1+ 6B_Td M_1)$,  starting with the choices in which  \{1,3\} is a loop. There are $2i$ ways to choose a loop and label their end points by 1 and 3. Then, there are at most $d$ ways to choose $2$ and $5$ as $v_1$, the vertex containing point $1$ has degree $d$. So the number of ways to choose the 2-path and $x$ so that \{1,3\} is a loop is at most $2idM_1$. Similarly, the number of ways to choose such a path and a pair so that $\{1,3\}$ is contained in a double edge or a triple edge is at most $4B_D dM_1$ and $6B_Td M_1$ respectively as there are at most $B_D$ double edges and $B_T$ triple edges. The extra factor $2$ accounts for the case that \{2,5\} is a loop or is contained in a double or triple edge.
Hence, the number of choices for (a) is at most $2dM_1(2i+4B_D+6B_T)$.

The number of choices for (b) is at most $M_2(2i+4B_D+6B_T)$, for (c)  is at most $6M_2 d$ and for (d) is at most $2M_2d^2$. Hence, using $M_2\le dM_1$,
\bean
b_A(P)&\ge& M_2M_1-2dM_1(2i+4B_D+6B_T)-M_2(2i+4B_D+6B_T+6d+2d^2)\\
&\ge&M_2M_1- 2dM_1(3i+6B_D+9B_T+3d+d^2)=\LB_A(i).\qed
\eean

\end{proof}

Let $\Phi'=\{P\in \acceptable:\ L(P)=0\}$.
\begin{lemma}\lab{lem:lunif}
Assume that the algorithm is not rejected initially or during the   loop reduction phase  and that $P'$ is the output after phase 1.  Then  $P'$ is uniformly distributed in $\Phi'$. 
\end{lemma}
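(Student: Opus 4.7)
The plan is to recognize phase~1 as the single-type, single-class special case of the general framework, namely the one highlighted in the remark following Lemma~\ref{l:equalised}, and then read off uniformity as a direct corollary of Lemma~\ref{l:equalised}.

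First I would collect the ingredients. Take $\Phi_1=\acceptable$, $\Tau=\{L\}$, $\Alpha=\{A\}$, and $\imax=B_L$, with $\state_i$ the set of pairings in $\acceptable$ having exactly $i$ loops, so that $\Phi_1=\bigcup_{0\le i\le\imax}\state_i$ and $\state_0=\Phi'$. Since a type L switching removes exactly one loop, it maps $\state_i$ into $\state_{i-1}$ for every $1\le i\le\imax$. Setting $\rho_L(i)=1$ makes \eqn{ParamCond1} and \eqn{ParamCond2} automatic, while \eqn{ParamCond3} and \eqn{ParamCond4} are precisely the bounds $f_L(P)\le\UB_L(i)$ and $b_A(P)\ge\LB_A(i)$ established in the preceding lemma.

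Next I would check assumption (A1) and the initial distribution. Transience is trivial: the loop count strictly decreases at each successful switching step, so $\markov$ is acyclic on $\Phi_1$ and every state is transient. The initial pairing $P_0$ is uniform on $\Phi$, and conditioning on ``no initial rejection'' means conditioning on $P_0\in\acceptable=\Phi_1$, so the initial distribution $\vec\pi_0$ is uniform on $\Phi_1$, which matches the blanket assumption made at the end of Section~\ref{s:phase}.

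Under these conditions, equations \eqn{equaliseclass} and \eqn{sigmasGen} collapse (exactly as in the remark after Lemma~\ref{l:equalised}) to the recurrence
\[
\sigma(j)=\frac{1}{|\Phi_1|}+\frac{\sigma(j+1)}{\UB_L(j+1)}\,\LB_A(j)\qquad (0\le j<\imax),
\]
with $\sigma(\imax)=1/|\Phi_1|$ (no switching produces an element of $\state_{\imax}$). This uniquely determines $\sigma(i)$ for every $i$, and Lemma~\ref{l:equalised} then asserts that the expected number of visits to each $P\in\state_i$ equals $\sigma(i)$, independent of which element of $\state_i$ is chosen.

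To finish, I would observe that the phase outputs the first element of $\state_0$ it reaches and then transits to $\output$, so each $P\in\state_0$ is visited at most once; hence $\sigma(0)$ equals the probability (conditional on no rejection at all) that a given $P\in\Phi'$ is the outputted pairing. Since $\sigma(0)$ is the same for every $P\in\state_0=\Phi'$, the distribution of $P'$ is uniform on $\Phi'$, proving the lemma. There is no real obstacle here: the only mild point of care is lining up the framework's assumption of a uniform initial distribution on $\Phi_1$ with the conditioning on no initial rejection, after which the conclusion is essentially a one-line appeal to Lemma~\ref{l:equalised}.
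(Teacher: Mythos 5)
Your proof is correct, but it reaches the conclusion by a different route from the paper. The paper proves Lemma~\ref{lem:lunif} by a direct induction on the step number $t$: starting from the fact that $P_0$ is uniform on $\acceptable$, it uses the transition probability formula \eqn{transitionprob} to show that if $P_{t-1}$ is uniform on $\state_i$ then $\pr(P_t=P)=\LB_A(i-1)/\big(|\state_i|\,\UB_L(i)\big)$ for every $P\in\state_{i-1}$, i.e.\ the conditional distribution within the current \group\ remains uniform at every step; uniformity of the output on $\state_0=\Phi'$ then falls out at the end of the induction. You instead instantiate the general framework --- checking \eqn{ParamCond1}--\eqn{ParamCond4}, (A1), and the uniform initial distribution --- and appeal to Lemma~\ref{l:equalised} via the single-type, single-class recurrence from the remark following it, concluding from the constancy of $\sigma(0)$ over $\state_0$ and the fact that each element of $\state_0$ is visited at most once. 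Both arguments rest on the same transition probabilities, and the remark after Lemma~\ref{l:equalised} explicitly notes that the inductive approach is the degenerate case of the general one; the paper's direct induction is self-contained and yields the slightly stronger fact that uniformity within each \group\ is maintained throughout the phase, while your version requires no new computation and makes phase~1 formally a corollary of the general machinery already set up for phase~3. Your handling of the two mild subtleties (conditioning on no initial rejection gives the uniform $\vec\pi_0$ on $\Phi_1$, and the boundary value $\sigma(\imax)=1/|\Phi_1|$) is correct.
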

\begin{proof}
Let $P_t$ be the pairing obtained after the $t$-th switching step in phase 1. We prove by induction that $P_t$ is uniformly distributed in the \group\ that contains $P_t$ for every $t$.  As the algorithm is not initially rejected, $P_0$ is uniformly distributed in $\acceptable$, and thus is uniformly distributed in the \group\ containing $P_0$, providing the base case. Assume that $t\ge 1$,  and $P_{t-1}$ is uniformly distributed in $\state_{i}$, where $i=L(P_{t-1})$. Recall that $S^-_{L,A}(P)$ is the set of pairings that can be switched to $P$ via a type L switching (and recall that all type L switchings are of class A). By~\eqn{transitionprob}, for every $P\in \state_{i-1}$, 
\[
\pr(P_t=P)=\sum_{P'\in S^-_{L,A}(P)} \pr(P_{t-1}=P') \frac{\LB_{A}(i-1)}{\UB_{L}(i) b_{A}(P)}, 
\]
as every $P'\in S^-_{L,A}(P)$ must be in $\state_i$. Since $P_{t-1}$ is uniformly distributed in $\state_i$ by the inductive hypothesis, we have
$\pr(P_{t-1}=P')=1/|\state_i|$. On the other hand, $b_{A}(P)=|S^-_{L,A}(P)|$ as all class A switchings are of type L. Immediately we have that  
\[
\pr(P_t=P) =\frac{\LB_{A}(i-1)}{ |\state_i|\UB_{L}(i)},
\]
which is independent of $P$. So $P_t$ is uniformly distributed in $\state_{i-1}$. Inductively, $P'$ is uniformly distributed in $\state_0$, which is $\Phi'$.\qed
\end{proof}

\begin{lemma}\lab{lem:lrej}
The overall probability that either an f- or b-rejection occurs in phase 1 is $O((d^2+d\log^2 n)/n)$.
\end{lemma}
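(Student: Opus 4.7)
\no I would follow the pattern of Lemmas~\ref{lem:drejf} and~\ref{lem:drejb}, exploiting the significant simplifications of phase~1: only one switching type (L), all of class A, is used; every step strictly decreases the loop count by one, so the Markov chain is acyclic and visits each $\state_i$ at most once; and by Lemma~\ref{lem:lunif} the conditional law of $P_t$ on $\{P_t\in\state_i\}$ is uniform on $\state_i$. Write $r_f(i)$ and $r_b(i)$ for the per-step f- and b-rejection probabilities given that the chain sits in $\state_i$, and $V_i$ for the probability that the chain visits $\state_i$. A union bound then gives
\[
\pr(\text{some rejection in phase 1}) \;\le\; \sum_{i\ge 1} V_i\bigl(r_f(i) + r_b(i-1)\bigr),
\]
with $V_i \le \pr(L(P_0) \ge i \mid P_0 \in \acceptable)$.

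The key estimates are $r_f(i), r_b(i) = O((i + d^2)/(nd))$. For $r_f$, write $\UB_L(i) - f_L(P) = 2i\cdot N(P)$, where $N(P)$ counts the invalid choices of the pair $(y,z)$ at a fixed loop labelling, and split $N(P)$ by source of invalidity: (i) $y$ or $z$ lies in a loop, double edge, or triple edge; (ii) the five vertices $v_1,\dots,v_5$ are not all distinct ($O(dM_1)$ deterministically); or (iii) one of the post-switching edges $v_1v_2$, $v_1v_4$, $v_3v_5$ is already present in $P$ ($O(d^2M_1)$ deterministically, since $v_1$ has at most $d-2$ non-loop neighbours). Averaging over $P\sim\mathrm{Unif}(\state_i)$ and using the standard pairing-model small-subgraph estimates $\ex[D\mid\state_i]=O(d^2)$ and $\ex[T\mid\state_i]=O(d^3/n)$ (essentially Lemma~3.2 of~\cite{MWenum}), piece~(i) contributes $O(M_1(i+d^2+d^3/n))$ in expectation, and combining with (ii)--(iii) yields $\ex N(P) = O(M_1(i+d^2))$, whence $r_f(i) = O((i+d^2)/M_1) = O((i+d^2)/(nd))$. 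The bound for $r_b$ runs in parallel: start from the deterministic upper bound $Y$ on the excluded count from the proof of the preceding lemma, replace the constants $B_D,B_T$ appearing in $Y$ by their expected counterparts $\ex D,\ex T$ when computing $\ex X$, and divide the resulting $\ex[Y-X] = O(d^3M_1 + idM_1)$ by $\LB_A(i-1) = \Omega(M_2M_1)$ to obtain the same $O((i+d^2)/(nd))$ bound.

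To finish, I combine these with the first two moments of $L(P_0)$. In the unconditioned pairing model, $\ex L = (d-1)/2$ and, since loops at distinct vertices are nearly independent, $\ex L^2 = O(d^2)$; conditioning on $\acceptable$, which has probability $\Omega(1)$ by Lemma~\ref{l:initial}, changes these moments by at most a constant factor. Therefore
\[
\sum_{i\ge 1} \pr(L(P_0)\ge i)\cdot \frac{i+d^2}{nd} \;=\; \frac{1}{nd}\Bigl(\tfrac{1}{2}\ex[L(L+1)] + d^2\,\ex L\Bigr) \;=\; O\!\left(\frac{d^2+d^3}{nd}\right) \;=\; O(d^2/n),
\]
which gives the desired bound. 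The main technical obstacle is justifying the conditional moment estimates $\ex[D\mid\state_i]$, $\ex[T\mid\state_i]$ and the expected value of the relevant parts of $X$ for the b-rejection; these follow from short subsidiary-switching arguments in the pairing model, of exactly the sort used in Lemmas~\ref{lem:1edge} and~\ref{lem:2double}.
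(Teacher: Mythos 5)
Your argument is essentially correct, but it takes a genuinely different and substantially heavier route than the paper. The paper's proof is a purely deterministic worst-case computation: because $\acceptable$ caps $D(P)\le B_D$, $T(P)\le B_T$ and $L(P)\le B_L$ pointwise, one gets, for \emph{every} $P\in\state_i$, the bounds $\UB_L(i)-f_L(P)\le 2iM_1(4i+8B_D+12B_T+4d+3d^2)$ and $M_2M_1-b_A(P')\le 2dM_1(3i+6B_D+9B_T+3d+d^2)$ directly, so each step is rejected with probability $O\big((i_1+B_D+B_T+d^2)/M_1\big)$ uniformly; multiplying by the at most $i_1=B_L$ steps finishes the proof with no probabilistic input at all. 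You instead import the phase-3 machinery: uniformity of $P_t$ on its stratum, visit probabilities $V_i$, conditional moments $\ex[D\mid\state_i]$, $\ex[T\mid\state_i]$, and the first two moments of $L(P_0)$. This works, and it is in fact \emph{tighter} in the regime where $d$ is small and $B_L,B_T=\Theta(\log n)$ (where the paper's crude product picks up $\log^2 n$ factors), but it buys that refinement at a real cost: the estimate $\ex[T\mid\state_i]=O(d^3/n)$ does not follow from the trivial bound $\ex[X\mid A]\le\ex[X]/\pr(A)$ (since $\pr(\state_i)$ is not $\Omega(1)$) and would need its own subsidiary-switching lemma in the style of Lemmas~\ref{lem:1edge} and~\ref{lem:2double} — which you only sketch, and which is already more work than the paper's entire proof. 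Also note that the stratum-wise uniformity you invoke is established inside the induction in the proof of Lemma~\ref{lem:lunif} rather than by its statement, and your per-step b-rejection bound implicitly uses the identity $|S^-_{L,A}(P)|=b_A(P)$ to cancel the $1/b_A(P)$ before averaging; both points are fine but should be said. In short: correct in outline, fillable gaps, but you solved phase 1 as if it were phase 3, whereas the whole point of the caps built into $\acceptable$ is to make phases 1 and 2 trivial.
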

 \begin{proof}
We estimate the probability that an f-rejection occurs at a given step.
 Assume that $P\in\state_i$. Then,
 $f_L(P)=\UB_L(i)-X(P)$, where $X(P)$ is the number of choices of a loop and the two pairs $\{3,4\}$ and $\{5,6\}$ so that 
 \begin{enumerate}
 \item[(a)] $\{3,4\}$ or $\{5,6\}$ is a loop or is contained in a multi-edge --- at most $2\cdot 2iM_1(2i+4B_D+6B_T)$;
 \item[(b)] $v_1\in\{v_2,v_3,v_4,v_5\}$ --- at most $4\cdot 2id M_1$;
 \item[(c)] $v_1$ is adjacent to $v_2$ or $v_4$; or $v_3$ is adjacent to $v_5$ --- at most $2\cdot 2i M_1 d^2+2i M_1d^2$.
 \end{enumerate} 
  
 
  
      Hence, $X(P)\le 2iM_1(4i+8B_D+12B_T+4d+3d^2)$.
   Then, the probability that $P$ is f-rejected is
   \[
   1-\frac{f_L(P)}{\UB_L(i)}=\frac{X(P)}{\UB_L(i)}=\frac{4i+8B_D+12B_T+4d+3d^2}{M_1}\le \frac{4i_1+8B_D+12B_T+4d+3d^2}{M_1}.
   \]
  As the number of loops decreases by one in each step and $L(P)\le i_1=B_L$, the total number of steps in phase 1 is at most $i_1$. Hence, the overall probability of an f-rejection in phase 1 is at most
  \[
  \frac{i_1(4i_1+8B_D+12B_T+4d+3d^2)}{M_1}.
  \]
  By~\eqn{initialbounds}, this is at most $O((d^2+d\log^2 n)/n)$. 
  
On the other hand, to assess b-rejection, assume $P\in\state_i$. It is obvious that $b_A(P)\le M_2M_1$. Hence, by~\eqn{loopbound} the probability that $P$ is b-rejected is
   \[
   1-\frac{\LB_A(i)}{b_A(P)}\le \frac {2dM_1(3i+6B_D+9B_T+3d+d^2)}{M_2M_1- 2dM_1(3i+6B_D+9B_T+3d+d^2)}. 
   \]
   As $i\le i_1=B_L$ and by~\eqn{initialbounds},
   $2dM_1(3i_1+6B_D+9B_T+3d+d^2)<M_2M_1/2$ for all sufficiently large $n$. Therefore, the probability that $P$ is b-rejected is at most
   \[
   \frac {4(3i_1+6B_D+9B_T+3d+d^2)}{(d-1)n}.
   \]
   As there are at most $i_1$ steps, the overall probability that a b-rejection occurs in phase 1 is at most 
   \[
   \frac {4i_1(3i_1+6B_D+9B_T+3d+d^2)}{(d-1)n}=O((d^2+d\log^2 n)/n).\qed
   \]
 
\end{proof}

\subsection{Phase 2: triple edge reduction}
\lab{triplereduction}

In this phase, we start with a pairing $P_0$ uniformly distributed in $\Phi_1=\{P\in\acceptable:\ L(P)=0\}$, which is a valid assumption  at the start of phase 2 by Lemma~\ref{lem:lunif}. We will use one type of switching, called type T, defined as follows.
\ss

\no
{\em Type T switching.} Let $P$ be a pairing containing at least one triple edge. Pick a triple edge from $P$ and label its end points as in Figure~\ref{f:triple}. Pick another three pairs $x$, $y$ and $z$, all of which represent single edges, and label their end points as in Figure~\ref{f:triple}. A type T switching manipulates the pairs as in the figure. This switching is valid only if the eight vertices involved as in the figure are all distinct, and there are no new multi-edges or loops created by its action.  All type T switchings are of class A. 
 \begin{figure}[htb]

 \hbox{\centerline{\includegraphics[width=9cm]{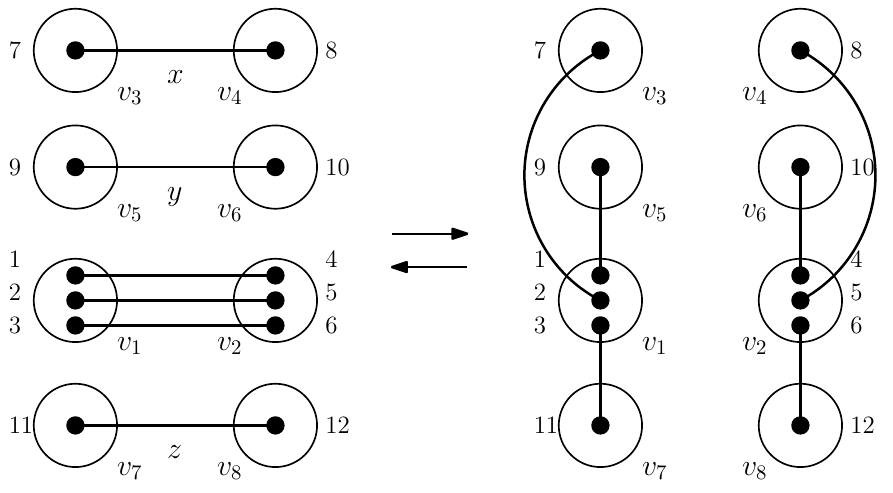}}}
 \caption{\it  type T switching}

\lab{f:triple}

\end{figure}

 In this phase, $\imax$ is set equal to $B_T$ defined in~\eqn{initialbounds}, and hence $\Phi_1=\cup_{0\le i\le \imax}\state_i$ where $\state_i$ is the set of pairings in $\Phi_1$ containing exactly $i$ triple edges. As only one type of switching is involved, we set $\rho_{T}(i)=1$ for every $1\le i\le \imax$.

Define 
\bea
\UB_{T}(i)&=&12 i M_1^3\non\\
\LB_{A}(i)&=&M_3^2-4M_3d^2(6B_D+9i+4d+d^2).\lab{triplebound}
\eea

\begin{lemma}
During phase 2, for each $P\in \state_i$, $f_{T}(P)\le \UB_{T}(i)$ and $b_A(P)\ge \LB_{A}(i)$.
\end{lemma}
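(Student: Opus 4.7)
The plan is to adapt the strategy used for the analogous type L bounds in Section~\ref{loopreduction}. The upper bound $f_T(P) \le \UB_T(i)$ will follow from a straightforward overcount of the ingredients of a type T switching, while the lower bound $b_A(P) \ge \LB_A(i)$ will follow by counting all labeled ``reverse'' configurations in $P$ and subtracting those that are invalid.

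For the upper bound, fix $P \in \state_i$. Specifying a type T switching on $P$ amounts to choosing and labeling a triple edge of $P$, together with three additional labeled pairs $x$, $y$, $z$. A triple edge consists of three parallel pairs between two vertices; the labeling in Figure~\ref{f:triple} places three ``odd'' labels in one vertex and three ``even'' labels in the other, and within that constraint the three pairs can be permuted in $3!$ ways and reflected in $2$ ways, giving $3! \cdot 2 = 12$ admissible labelings per triple edge, hence a factor $12i$. Each of the three remaining pairs, together with an ordering of its two endpoints, contributes at most $M_1$ options. Hence $f_T(P) \le 12 i M_1^3 = \UB_T(i)$.

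For the lower bound, $b_A(P)$ equals the number of labeled configurations in $P$ matching the right-hand side of Figure~\ref{f:triple}, since each such configuration corresponds uniquely to the type T switching producing $P$. Such a configuration consists of two $3$-stars centered at two distinguished vertices, each contributing a factor $M_3 = n[d]_3$ (a central vertex together with three ordered distinct points in it), yielding $M_3^2$ unrestricted candidates. From this I would subtract those configurations in which at least one of the following fails: (a)~some participating pair is contained in a double or triple edge of $P$ (no loops arise since $L(P)=0$ throughout phase~2); (b)~the eight vertices involved are not all distinct; or (c)~there is a pre-existing adjacency among the eight vertices that would create a loop or multi-edge upon inverting the switching. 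Mimicking the argument used for the type L switching, case (a) contributes at most $O(M_3 d^2(B_D + i))$, case (b) contributes $O(M_3 d^3)$, and case (c) contributes $O(M_3 d^4)$. Tracking the multiplicities implied by the symmetries of the figure gives a total exclusion of at most $4M_3 d^2 (6B_D + 9i + 4d + d^2)$, yielding $b_A(P) \ge \LB_A(i)$.

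The main obstacle is the bookkeeping in case (c): there are several distinct bad-adjacency patterns among the eight vertices, each giving rise to a term in the bound, and the precise constants $6$, $9$, $4$, and $1$ in $\LB_A(i)$ depend on getting these multiplicities right. The structural argument is, however, entirely parallel to the loop-reduction case; the only substantive difference is the larger number of vertices (eight instead of five), which inflates the combinatorics but does not change its character.
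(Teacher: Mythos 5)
Your proposal follows essentially the same route as the paper: the upper bound is obtained by the identical count $i \cdot 12 \cdot M_1^3$, and the lower bound starts from $M_3^2$ labelled pairs of 3-stars and subtracts the same three classes of bad configurations (a pair lying in a multi-edge, the two stars sharing a vertex, and a pre-existing edge between the vertices that the reverse switching would join), with the paper's individual bounds $6M_3(4B_Dd^2+6id^2)$, $16M_3d^3$ and $4M_3d^4$ summing exactly to the $4M_3d^2(6B_D+9i+4d+d^2)$ you quote. The only thing you leave unfinished is the explicit constant-tracking in each case, which is routine and matches the paper's computation.
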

\begin{proof}
The upper bound is trivial. There are $i$ ways to choose a triple edge and then $12$ ways to label their end points and then at most $M_1^3$ ways to choose the other $3$ pairs. 

For the lower bound, recall that for any $P\in\state_i$, $b_A(P)$ is the number of type T switchings that produce $P$. To count these, we choose two 3-stars and label their end points as on the right side of Figure~\ref{f:triple}. The number of such choices is $M_3^2$. Some of these choices are excluded:
 \begin{enumerate}
\item[(a)] one of the 3-stars contains a double edge or a triple edge --- at most $6M_3(4B_Dd^2+6id^2)$;
\item[(b)] the two 3-stars share a vertex --- at most $16M_3d^3$;
\item[(c)] there is an edge between $v_{2i-1}$ and $v_{2i}$ for some $i=1,2,3,4$ --- at most $4M_3d^4$.
\end{enumerate} 
  
  
  
  Hence, $b_A(P)\ge M_3^2-4M_3d^2(6B_D+9i+4d+d^2)=\LB_A(i)$. \qed
   \end{proof}
\ss

Analogous to the proof of Lemma~\ref{lem:lunif} we have the following lemma, where $\Phi''=\{P\in \Phi_1:\ T(P)=0\}$.
\begin{lemma}\lab{lem:tunif}
Assume that no rejection occurs initially or during phase 1 or 2. Then, the output of phase 2 is a pairing uniformly distributed in $\Phi''$.
\end{lemma}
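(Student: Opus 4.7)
The plan is to mirror exactly the proof of Lemma~\ref{lem:lunif}, with the type T switching of phase~2 playing the role of the type L switching of phase~1. The essential ingredients are already in place: phase~2 uses only one type ($T$) and one class ($A$) of switching, we have set $\rho_T(i)=1$ for all $i$, every switching converts a pairing in $\state_{i+1}$ to one in $\state_i$ (strictly decreasing the triple-edge count by one), and the parameters $\UB_T(i)$, $\LB_A(i)$ have been shown to satisfy~\eqn{ParamCond3} and~\eqn{ParamCond4}. This puts us squarely in the setting of the remark following Lemma~\ref{l:equalised}.

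First I would establish the base case. By Lemma~\ref{lem:lunif}, conditional on the algorithm not rejecting initially or during phase~1, the pairing $P_0$ entering phase~2 is uniformly distributed in $\Phi' = \{P\in\acceptable : L(P)=0\} = \Phi_1$. Hence, conditional on $T(P_0)=i$, $P_0$ is uniformly distributed in $\state_i$.

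Next I would run an induction on the step count $t\ge 0$, with the hypothesis that if no rejection has occurred through step $t$ and $T(P_t)=i$, then $P_t$ is uniformly distributed in $\state_i$. For the inductive step, fix $P\in\state_{i-1}$. Because phase~2 only uses type T class A switchings, the only pairings from which $P$ can be reached in one step are those in $S^-_{T,A}(P)\subseteq \state_i$. Applying~\eqn{transitionprob} with $\tau=T$, $\alpha=A$, $\rho_T(i)=1$, and summing over the pre-images in $\state_i$, I get
\[
\pr(P_t=P)=\sum_{P'\in\state_i}\pr(P_{t-1}=P')\, N_{T,A}(P',P)\frac{\LB_A(i-1)}{\UB_T(i)\,b_A(P)}.
\]
By the inductive hypothesis, $\pr(P_{t-1}=P')$ is constant (in $P'$) across $\state_i$, and by definition $\sum_{P'\in\state_i}N_{T,A}(P',P)=b_A(P)$, so the $b_A(P)$ in the denominator cancels with this sum and
\[
\pr(P_t=P)=\frac{c\,\LB_A(i-1)}{|\state_i|\,\UB_T(i)},
\]
which is independent of the choice of $P\in\state_{i-1}$. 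Hence $P_t$ is uniform in $\state_{i-1}$, completing the induction.

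Finally, since the phase terminates as soon as the current pairing lies in $\state_0$, the output is uniformly distributed in $\state_0=\Phi''$. The argument goes through verbatim for Lemma~\ref{lem:tunif} because the only structural facts required are those already guaranteed by Section~\ref{triplereduction}: a single type and class of switching, $\rho_T(i)=1$, strict monotonicity of the triple-edge count under a switching, and the validity of the lower bound $\LB_A(i)$ on $b_A(P)$ (so that no b-rejection is necessary in order for the cancellation above to be legitimate). No genuine obstacle arises; the only thing to be careful about is bookkeeping that all pre-images of $P\in\state_{i-1}$ under a valid type~T switching lie in $\state_i$ (immediate, since a type~T switching always decreases $T(\cdot)$ by exactly one), so that the sum over $S^-_{T,A}(P)$ indeed coincides with the sum over $P'\in\state_i$ in the transition-probability identity.
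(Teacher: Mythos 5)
Your proposal is correct and follows essentially the same route as the paper, which itself gives no separate argument for this lemma but simply declares it analogous to the proof of Lemma~\ref{lem:lunif}; your induction on the step count, using $\rho_T(i)=1$, the single type/class of switching, the identity $\sum_{P'}N_{T,A}(P',P)=b_A(P)$, and the uniformity of the phase-2 input guaranteed by Lemma~\ref{lem:lunif}, is exactly that adaptation. No issues.
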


\begin{lemma}\lab{lem:trej}
The probability that either an f-rejection or a b-rejection occurs during phase 2 is $O(d^4/n^2+d\log^2 n/n)$.
\end{lemma}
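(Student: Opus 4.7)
The plan is to follow closely the proof of Lemma~\ref{lem:lrej} for phase~1, bounding the per-switching-step probabilities of f- and b-rejection by a worst-case analysis, and then summing over the at most $i_1 = B_T$ switching steps that phase~2 can contain (since each type~T switching decreases the triple-edge count by one). In each case the per-step bound will be $O(d/n)$, giving total rejection probability $O(B_T \cdot d/n) = O(d^4/n^2)$, in the regime where the $d^3/n$ term in $B_T$ dominates.

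For the f-rejection probability when $P \in \state_i$, I would write $f_T(P) = \UB_T(i) - X(P)$, where $X(P)$ counts the labelled quadruples (triple edge, $x$, $y$, $z$) that fail to give a valid type~T switching. The invalid configurations split into three cases: (a) one of $x,y,z$ lies inside a double or triple edge, bounded by $O(i(B_D+B_T)M_1^2)$ via $12i$ choices for the triple edge, $3$ choices for which auxiliary pair is bad, $O(B_D + B_T)$ choices for that pair inside a multi-edge, and $M_1^2$ choices for the other two pairs; (b) the eight vertices $v_1,\ldots,v_8$ are not all distinct, bounded by $O(idM_1^2)$ because each coincidence forces an outer endpoint into a prescribed vertex of degree $d$ (and there are no loops since $P \in \Phi_1$); (c) performing the switching would create a new double edge, i.e.\ an adjacency $v_1 \sim v_j$ for $j \in \{3,5,7\}$ (or symmetrically at $v_2$) already exists, bounded by $O(id^2 M_1^2)$ via at most $d$ choices for a pair leaving $v_1$ and at most $d$ choices for the point in the neighbouring vertex that belongs to the auxiliary pair. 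Since $B_D, B_T = O(d^2)$, these sum to $X(P) = O(id^2 M_1^2)$, and hence
\[
1 - \frac{f_T(P)}{\UB_T(i)} = \frac{X(P)}{\UB_T(i)} = O\!\left(\frac{d^2}{M_1}\right) = O(d/n).
\]

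For the b-rejection probability, I would use $b_A(P) \le M_3^2$ together with $\LB_A(i) = M_3^2 - 4M_3 d^2(6B_D + 9i + 4d + d^2)$ from~\eqn{triplebound}. Then $b_A(P) - \LB_A(i) = O(M_3 d^2(B_D + d^2)) = O(M_3 d^4)$, while $\LB_A(i) = \Omega(M_3^2) = \Omega(n^2 d^6)$ since $d = o(\sqrt n)$ forces $M_3 d^4 = o(M_3^2)$. The per-step b-rejection probability is therefore $(b_A(P) - \LB_A(i))/\LB_A(i) = O(d^4/M_3) = O(d/n)$.

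Combining both rejection types and summing over at most $B_T$ steps via the union bound yields total rejection probability $O(B_T \cdot d/n) = O(d^4/n^2)$, as claimed. The main obstacle is the bookkeeping in case (c) of the f-rejection analysis: the type~T switching involves eight vertices and three auxiliary pairs, producing six symmetric subcases of potentially dangerous pre-existing adjacencies, each of which must be carefully counted to ensure the total stays $O(id^2 M_1^2)$ rather than something larger. This is essentially a larger-scale version of the case analysis already done for phase~1, and no new probabilistic ideas beyond the deterministic bounds on $f_T(P)$ and $b_A(P)$ are required.
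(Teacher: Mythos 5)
Your proof is correct and follows essentially the same route as the paper's: a worst-case deterministic bound on $\UB_T(i)-f_T(P)$ via the same three families of bad configurations (an auxiliary pair inside a multi-edge, vertex coincidences, and pre-existing adjacencies $v_1\sim v_j$ or $v_2\sim v_j$), the comparison of $b_A(P)\le M_3^2$ against $\LB_A(i)$ from~\eqn{triplebound} for the b-rejection, and a union bound over the at most $i_1=B_T$ switching steps. The only cosmetic difference is that the paper phrases your vertex-coincidence case as ``the triple edge is adjacent to $x$, $y$ or $z$'', and your explicit caveat about which term of $B_T$ dominates is a point the paper passes over silently.
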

 \begin{proof}
 We estimate the probability of an f-rejection in each step. Assume $P\in \state_i$ and recall the type T switchings in Figure~\ref{f:triple}; $f_{T}(P)=\UB_T(i)-X(P)$ where $X(P)$ is the number of choices of the triple edge and pairs such that
 \begin{enumerate}
 \item[(a)] $x$ or $y$ or $z$ is contained in a multi-edge --- at most  $3\cdot 12i M_1^2(4B_D+6i)$;
 \item[(b)] the triple edge is adjacent to $x$ or $y$ or $z$ --- at most $12\cdot 12id M_1^2$;
 \item[(c)] $v_1$ is adjacent to $v_3$ or $v_5$ or $v_7$; or $v_2$ is adjacent to $v_4$ or $v_6$ or $v_8$ --- at most $6\cdot 12id^2 M_1^2$.
 \end{enumerate} 
 
 
 
 Hence, $X(P)\le 72iM_1^2(2B_D+3i+2d+d^2)$. So, using $i\le i_1=B_T$,
 the probability that $P$ is f-rejected is 
 \[
 1-\frac{f_T(P)}{\UB_T(i)}=\frac{X(P)}{\UB_T(i)}\le \frac{72iM_1^2(2B_D+3i+2d+d^2)}{12 i M_1^3}\le \frac{6(2B_D+3i_1+2d+d^2)}{M_1}.
 \]
 As $T(P_0)\le i_1$, the number of steps in phase 2 is at most $i_1$ and hence, by~\eqn{initialbounds}, the overall probability of an f-rejection in phase 2 is at most
 \[
 \frac{6i_1(2B_D+3i_1+2d+d^2)}{M_1}=O(d^4/n^2+d\log^2 n/n).
 \]

On the other hand,
for $P\in\state_i$, $\LB_A(i)\le b_A(P)\le M_3^2$. Using~\eqn{triplebound}, the probability that $P$ is b-rejected is
 \[
 1-\frac{\LB_A(i)}{b_A(P)}\le \frac{4d^2(6B_D+9i+4d+d^2)}{M_3-4d^2(6B_D+9i+4d+d^2)}.
 \]
 It is easy to see that $4M_3d^2(6B_D+9i_1+4d+d^2)\le M_3^2/2$ for all sufficiently large $n$. Hence, the above probability is at most 
 \[
 \frac{8d^2(6B_D+9i_1+4d+d^2)}{M_3}=O(d/n+\log n/dn).
 \]
 So the overall probability of a b-rejection in phase 2 is at most 
 $i_1\cdot O(d/n+\log n/dn)=O(d^4/n^2+d\log^2 n/n)$.\qed
   \end{proof}

\section{ \NameB\ and  proof of Theorems~\ref{thm:uniform},~\ref{t:complexity} and~\ref{thm:approx}}
\lab{sec:main}

 \NameA\ possesses several complicated features to achieve uniformity of the output distribution in each phase. Some of these involve time-consuming computations such as finding the probability of a b-rejection. By omitting some of these features, we obtain a simpler sampler \NameB.

\NameB\ starts by repeatedly generating a random pairing $P\in \Phi$ until $P\in \acceptable$ with $\gamma=1$ (or any positive number). Then, \NameB\ sequentially enters the three phases for reduction of loops, triple edges and double edges. In each phase, there will be only one type of switchings: types L and T for phases 1 and 2 respectively and type I for phase 3. Each phase consists of a sequence of switching steps in which a random switching is chosen and performed. Output the resulting pairing if it is in $\state_0$. 

In other words, \NameB\ is a simplification of \NameA\ by omitting f- or b-rejections in each phase, and by discarding type II switchings in phase 3. Therefore, there is no need to choose switching types in each switching step as in \NameA. As a result, there is no need of pre-computation for solving~\eqn{recx}--\eqn{atmost1}, and there is no t-rejection any more.

First we prove Theorem~\ref{thm:approx}, assuming Theorem~\ref{thm:uniform}.

\smallskip

\no {\bf Proof of Theorem~\ref{thm:approx}.\ }
 The probability of \NameA\ terminating with an f-rejection, a t-rejection of a b-rejection is $o(1)$ by Lemmas~\ref{lem:drejt},~\ref{lem:drejf} and~\ref{lem:drejb} respectively in phase 3, and similarly for the other phases.  The probability of \NameA\ performing a type II switching in phase 3 is $o(1)$ by Lemma~\ref{lem:steps} and the fact that $\rho_{II}(i)=O(d^2/n^2)$ as specified just after Lemma~\ref{lem:solution}. So, conditioning on the two algorithms generating the same initial pairing with no initial rejection, in an event with probability $1-o(1)$, \NameB\ has the same output as \NameA, which is uniformly distributed. In the remaining cases, an event of conditional probability $o(1)$, either some rejection occurs in \NameA\ and \NameB\ carries on regardless, or a type II switching is performed in \NameA\  and \NameB\ produces some output (exactly what is irrelevant). Bearing in mind that initial rejection occurs with probability bounded away from 1 by Lemma~\ref{l:initial}, it follows that    the total variation distance between the output of \NameB\  and the uniform distribution is $o(1)$.
  
 It is easy to see that uniformly generating a pairing $P\in\Phi$ takes  $O(dn)$ steps, and so does verification of $P\in\acceptable$ with $\gamma=1$, when using appropriate data structures. By Lemma~\ref{l:initial}, $P\in\acceptable$ with probability $1/2+o(1)$, and so it takes $O(dn)$ steps in expectation until a random $P\in\acceptable$ is generated. 
The total number of switching steps in all phases is bounded by $O(B_L+B_D+B_T)=O(d^2+\log n)$ in expectation and each switching step takes $O(1)$ unit of time. Thus, the total time complexity of \NameB\ is $O(dn+d^2+\log n)=O(dn)$ in expectation. \qed\ss

 Now we turn to \NameA. We first prove the uniformity  of the  output distribution.
 
 \smallskip

\no {\bf Proof of Theorem~\ref{thm:uniform}.\ } The uniform distribution of the output of \NameA\ is guaranteed by Lemmas~\ref{lem:uniform} and~\ref{lem:tunif}, since the parameters $\rho_{\tau}(i)$ are set for \NameA, just after Lemma~\ref{lem:solution},  using a solution $(\rho^*_{\tau}(i), x^*_i)$ of the system~\eqn{recx}--\eqn{atmost1}. (Recall that if the system had no solution, phase 3 was made redundant by requiring $D(P)=0$ in the definition of $\acceptable$ in Section~\ref{sec:A}.) \qed \ss
 
    Finally, we estimate the time complexity of \NameA.
    Implementing a switching step as described in Section~\ref{sec:framework} would apparently require knowledge of $f_{\tau}(P)$ and $b_{\alpha}(P)$. However, we first observe that there is no need to compute 
$f_{\tau}(P)$. This number is only used to compute the f-rejection probability at substep (iii) of a switching step. Take the type I switching in phase 2 as an example. Rather than computing $f_{I}(P)$, we can just randomly pick a double edge and label its ends, and then uniformly at random pick two pairs, with repetition, from the total $M_1/2-2i$ pairs other than those contained in a double edge, and then randomly label their ends. The total number of choices is exactly $\UB_{I}(i)$. Perform the switching if it is valid, and perform an f-rejection otherwise. The probability of an f-rejection is then exactly $1-f_{I}(P)/\UB_{I}(i)$. It is easy to see that this holds for all other types of switchings, by the way we defined each $\UB_{\tau}(i)$.   

\smallskip

\no {\bf Proof of Theorem~\ref{t:complexity}.\ } We have shown that   generating a pairing  $P\in\Phi$ uniformly at random takes $O(dn)$ steps. By Lemma~\ref{l:initial}, $P\in\acceptable$ with probability $\gamma/(1+\gamma)+o(1)$, and so it takes $O(dn/\gamma)$ steps in expectation until a random $P\in\acceptable$ is generated. To set the parameters   $\rho_{\tau}(i)$, we need to compute a solution to system~\eqn{recx}--\eqn{atmost1}. Using the computation scheme in Lemma~\ref{lem:solution}, the computation time is $O(B_D)=O(d^2)$, as remarked below Lemma~\ref{lem:solution}.

To generate a random graph, we repeat \NameA\ until there is a run in which rejection does not occur. By Lemmas~\ref{lem:lrej},~\ref{lem:trej},~\ref{lem:drejt},~\ref{lem:drejf} and~\ref{lem:drejb}, the probability that a rejection  occurs during any of the three phases is $o(1)$.  Thus, it is sufficient to bound the time complexity of each phase in one run of \NameA. 
  Each phase consists of a sequence of switching steps, and as observed above, we only need to consider the computation time of $b_{\alpha}(P)$.
 Phase 3 is the most crucial since it runs for the greatest number, $O(d^2)$,  of switching steps in expectation. The same ideas suffice easily for the other phases. An implementation that does the job is described in~\cite[Theorem 4]{MWgen}. Briefly, it goes as follows. Data structures are maintained that require an initial computation time of $O(nd^3)$ for the input pairing.  These data structures record, for instance, how many 3-paths join each given pair of vertices. (Note that the number of nonzero instances of this is $O(nd^3)$.)  They also record how many 3-paths join a pair of vertices using   a given point, and some other counts  relating to 2-paths, 3-paths starting with a double edge,  {\em pairs} of 3-paths joining each two given vertices, and similar. These data structures can be initialised in time $O(nd^3)$ by starting at each vertex in turn and investigating each 3-path leading from it. Moreover, they can be updated in time $O(d^2)$ for each switching step. This is because each switching step alters $O(1)$ pairs, each of which would affect $O(d^2)$ of the numbers being recorded (e.g.\ the number of 3-paths between two vertices). As the total number of switching steps in this phase is $O(d^2)$ in expectation, this requires $O(d^4)$ steps in expectation throughout all the switchings of the phase.   These  data structures are used for implementing a scheme explained in~\cite{MWgen}  to calculate the number of switchings of  class A that could produce the pairing $P'$ that was actually produced in  a switching step.  For a pair of 2-paths not to appear as those in the right hand side of Figure~\ref{f:doubleI}, there must be some ``coincidence'' occurring: either one of the edges shown is a double edge, or there is a coincidence of vertices, or some edge joins $u_2$ to $v_2$, or similar. The number of pairs of 2-paths satisfying any prescribed set of coincidences can be calculated using the data structures, and then one proceeds by inclusion-exclusion. For instance (one of the ``hardest'' cases) is   when both $u_2v_2$ and $u_3v_3$ are edges. To count these configurations, we count pairs of 3-paths joining $u_1$ to $v_1$. Instances where these two 3-paths intersect accidentally are accommodated as part of the whole inclusion-exclusion scheme. A bounded number of cases need to be considered,  and each can be calculated in constant time from the appropriate data structures. (A very large bound on the number of cases is supplied in~\cite{MWgen},  but actually they can be described by approximately 20 different graphs showing the way that two 2-paths can intersect or have adjacent corresponding pairs of vertices,  not counting the options for where double edges may occur among the edges of the 2-paths.) As a result, the most time-consuming part is the initialisation of the data structure, which takes $O(nd^3)$ time. Maintaining the data structure in the subsequent switching steps takes only $O(d^4)=o(nd^3)$ time.  \qed

 \ss
 
 \no{\bf Computational note.\ } 
If the constraints in Lemma~\ref{lem:solution}  are not satisfied, the algorithm terminates whenever the number of double edges is greater than 0, according to the definition of $\acceptable$ in Section~\ref{sec:A}. For practical purposes it may be of interest to know when they cannot be satisfied for any choice of $\gamma>0$ and $\epsilon$, as the algorithm then becomes equivalent to {\em DEG} and it may take many repetitions before a successful output occurs. We note that, for $d>3$, the equation
$$
4 (d-1)^2(d-2)(d-3)=dn (dn-7d^2+7d-10   )
$$
is quadratic in $n$ with a  unique positive solution:
$$
x(d):=\frac72 d -\frac72 +5/d+(1/2d)\sqrt{65 d^4-210 d^3+461 d^2-412 d+196)}.
$$
   Letting $x(d)$ denote that solution, 
 there will clearly exist a choice of $\eps$ and $\gamma$ satisfying~\eqn{nLB1} and~\eqn{nLB2}, provided that $n>x(d)$. This bound and~\eqn{nLB3} are both satisfied if $n$ is at least $69,64,64,66,69$ for $d=4,5,6,7,8$ respectively. For such $n$, the extra power of the algorithm comes into play. Of course, the lower bound is asymptotically linear in $d$.

\end{document}